%
%
%
%
\documentclass{amsart}
\usepackage{txfonts}
\usepackage{}

\newtheorem{theorem}{Theorem}[section]

\newtheorem{corollary}[theorem]{Corollary}

\theoremstyle{definition}

\theoremstyle{remark}
\newtheorem{remark}[theorem]{Remark}

\numberwithin{equation}{section}



\begin{document}

\title{Four-manifolds with  postive Yamabe constant}

\author{Hai-Ping Fu}
\address{Department of Mathematics,  Nanchang University, Nanchang, P.
R. China 330031}
\email{mathfu@126.com}
\thanks{Supported in part by National Natural Science Foundations of China \#11261038 and \#11361041.}


\subjclass[2000]{Primary 53C21; Secondary 53C20}



\keywords{Four-manifold, Einstein manifold,  harmonic curvature, harmonic Weyl tensor, Yamabe constant}

\begin{abstract}
We refine Theorem B due to Gursky \cite{G3}. As applications, we give some rigidity theorems on four-manifolds with  postive Yamabe constant. In particular, these rigidity theorems are sharp for our conditions have the additional properties of being sharp. By this we mean that we can precisely characterize the case of equality.
 We prove some classification theorems of four manifolds according to some conformal invariants (see Theorems 5.1, 1.3 and 1.6), which reprove and generalize  the conformally invariant sphere theorem of Chang-Gursky-Yang \cite{CGY}, i.e., Theorem D.
\end{abstract}

\maketitle

\section{Introduction and main results}
In \cite {F}, the author proved that an  $n$-manifold with harmonic curvature  satisfied some inequality that the upper bound of some
curvature functionals  is controlled by  Yamabe constant is actually isometric to a quotient of the standard sphere or an Einstein manifold. By this we mean that we can precisely characterize the case of equality. The aim of this paper is to present some rigidity results in the subject of curvature pinching on four-manifolds with   postive Yamabe constant.

Let $(M^n, g)$ be an $n$-dimensional Riemannian manifold. The
decomposition of the Riemannian curvature tensor $Rm$  into irreducible components yield
\begin{eqnarray*}
Rm=W+\frac{1}{n-2}\mathring{Ric}\circledwedge g+
\frac{R}{2n(n-1)}g\circledwedge g,
\end{eqnarray*}
where  $W$, $Ric$, $\mathring{Ric}=Ric-\frac{R}{n}g$ and $R$ denote the Weyl curvature tensor,  Ricci tensor,  trace-free Ricci tensor   and   scalar curvature, respectively. When the divergence of the Weyl curvature tensor  $W$ is vanishing, i.e., $\delta W=0$, $(M^n, g)$ is said to be a manifold with  harmonic Weyl tensor.

The sphere theorem for $\frac14$-pinched Riemannian manifolds, conjectured by H. Rauch
in 1951, is a  good example of the deep connections between the topology
and the geometry of Riemannian manifolds. After a large amount of research, now we know that the
answer is positive, due to the fundamental work of Klingenberg, Berger and Rauch
for the topological statement and the recent proof of the original conjecture by Brendle
and Schoen \cite{BS}, based on the results of B\"{o}hm and Wilking \cite{BW}.

Now, since the work of Huisken \cite{Hu} and Margerin \cite{M}, we know that there exists a positive-dimensional constant
$C(n)$ such that if $|W+\frac{1}{n-2}\mathring{Ric}\circledwedge g|^2 < C(n)R^2(g)$, then $M^n$ is diffeomorphic to a quotient of the standard unit sphere. In particular, Margerin improved the constant in dimension four, and obtained the
optimal theorem in \cite{M}.

The common feature of all the above results is to give topological information on a manifold
that carry a metric whose curvature satisfies a certain pinching at each point.
The question one raises here is whether one can characterize the topology and the geometry of Riemannian manifolds by means of integral
pinching conditions instead of pointwise ones.
Some results in this direction on four manifolds were obtained by some authors \cite{{BC},{CGY},{CZ},{G2},{G3},{HV}}.

In four-manifolds, the Weyl functional $\int |W_{g}|^2$  has long been an object of
interest to physicists. Suppose $M^4$ is a four-dimensional manifold. Then the Hodge
$\ast$-operator induces a splitting of the space of two-forms $\wedge^2=\wedge^2_{+}+\wedge^2_{-}$ into the
subspace of self-dual forms $\wedge^2_{+}$ and anti-self-dual forms $\wedge^2_{-}$. This splitting in
turn induces a decomposition of the Weyl curvature  into its self-dual and anti-self-dual components $W^{\pm}$. A four-manifold is said to be self-dual (resp., anti-self-dual) if $W^-=0$ (resp., $W^+=0$). It is said to be a manifold with half harmonic Weyl tensor if $\delta W^{\pm}=0$. By
the Hirzebruch signature formula (see \cite{B}),
\begin{equation}\int_{M}|W^{+}|^{2}-\int_{M}|W^{-}|^{2}= 48\pi^2\sigma(M),\end{equation}
where $\sigma(M)$ denotes the signature of $M$.
A consequence of (1.1) is that the study of the Weyl functional is completely equivalent to
the study of the self-dual Weyl functional $\int_{M}|W^{+}|^{2}$. Gursky \cite{{G2},{G3}} have obtained some good and interesting results by studying $\int_{M}|W^{+}|^{2}$ (See Theorems A, B and C). For background material on this
condition we recommend chapter 16 of \cite{B} and the paper of Derdzinski \cite{D}.

Our formulation of some results will be given in term of the Yamabe invariant.
Now we  introduce the definition of the Yamabe constant. Given a compact  Riemannian $n$-manifold $M$, we consider the Yamabe functional
$$Q_g\colon C^{\infty}_{+}(M)\rightarrow\mathbb{R}\colon f\mapsto Q_g(f)=\frac{\frac{4(n-1)}{n-2}\int_M|\nabla f|^2\mathrm{d}v_g+\int_M Rf^2\mathrm{d}v_g}{(\int_M f^{\frac{2n}{n-2}}\mathrm{d}v_g)^{\frac{n-2}{n}}},$$
where $R$ denotes the   scalar curvature of $M$.
It follows that $Q_g$ is bounded below by H\"{o}lder inequality. We set
$$\mu([g])=\inf\{Q_g(f)|f\in C^{\infty}_{+}(M)\}.$$
This constant $\mu([g])$ is an invariant of the conformal class of $(M, g)$, called the Yamabe constant.
The important works of Schoen, Trudinger and Yamabe showed that the infimum in the above is always achieved (see \cite{{A},{LP}}).
The Yamabe constant of a given compact manifold
is determined by the sign of scalar curvature \cite{A}.

Gursky \cite{{G2},{G3}} proved the  three striking Theorems A, B and C. As byproducts, Gursky \cite{{G2},{G3}} obtained these integral pinching results, which are generalizations of the Bochner theorem
in dimension four (See Theorems E, F and G).

\noindent
{ {\bf Theorem A (Gursky \cite{G2}).} Let $(M^4, g)$  be a $4$-dimensional   compact Riemannian manifold  with  positive Yamabe constant  and the space of self-dual harmonic two-forms $H^2_{+}(M^4)\neq0$.
Then
 \begin{equation*} \int_M|W^{+}|^{2}\geq \frac{16}{3}{\pi}^2(2\chi(M^4)+3\sigma(M^4)),\end{equation*}
 where $\chi(M)$ is the Euler-Poincar\'{e} characteristic of $M$.
 Furthermore, equality holds in the above inequality if and only if
$g$ is conformal to a positive K\"{a}hler-Einstein metric.}

\noindent
{{\bf Theorem B (Gursky \cite{G3}).} Let $(M^4, g)$  be a $4$-dimensional   compact Riemannian manifold  with  positive Yamabe constant  and $\delta W^{+}=0$.
Then either $(M^4, g)$
is anti-self-dual, or
 \begin{equation*} \int_M|W^{+}|^{2}\geq \frac{16}{3}{\pi}^2(2\chi(M^4)+3\sigma(M^4)).\end{equation*}
 Furthermore, equality holds in the above inequality if and only if
$g$  is a positive Einstein metric
which is either K\"{a}hler, or the quotient of a K\"{a}hler manifold by a free, isometric,
anti-holomorphic involution.}

\noindent
{ {\bf Theorem C (Gursky \cite{G2}).} Let $(M^4, g)$  be a $4$-dimensional   compact Riemannian manifold  with  positive Yamabe constant  and the space of harmonic $1$-forms $H^1(M^4)\neq0$.
Then
 \begin{equation*} \int_M|W^{+}|^{2}\geq 8{\pi}^2(2\chi(M^4)+3\sigma(M^4)).\end{equation*}
 Furthermore, equality holds in the above inequality if and only if
$(M^4, g)$ is conformal to a quotient of $\mathbb{R}^1\times \mathbb{S}^{3}$ with the product metric.}

\noindent
Chang, Gursky and Yang \cite{CGY} proved that a  four manifold with positive Yamabe constant which satisfies the strict inequality for the Weyl functional $\int |W_{g}|^2$ is actually diffeomorphic to a quotient of the sphere and precisely
characterize the case of equality. We state this result of Chang-Gursky-Yang as follows:

\noindent
{{\bf Theorem D(Chang-Gursky-Yang \cite{CGY}).} Let $(M^4, g)$  be a $4$-dimensional   compact Riemannian manifold  with  positive Yamabe constant. If
 \begin{equation*} \int_M|W|^{2}\leq 16{\pi}^2\chi(M),\end{equation*} then  1) $M^4$  is diffeomorphic to the round $\mathbb{S}^4$ or to the real projective space $\mathbb{RP}^4$;

2) $M^4$ is conformal to a manifolds which is isometrically covered by $\mathbb{S}^1\times \mathbb{S}^{3}$ with the product metric;

3) $M^4$ is conformal to $\mathbb{CP}^2$ with the Fubini-Study metric.}

\noindent
Bour and Carron \cite{BC}  reprove and extend to higher degrees and higher dimensions
Theorems F and G obtained by M. Gursky. Bour \cite{B2}  give a new proof of  Theorem D
under a stronger pinching assumption, which is entirely based on the study of a geometric
flow, and doesn't rely on the pointwise version of the theorem, due to Margerin. Chen and Zhu \cite{CZ} proved a classification theorem of $4$-manifolds
according to some conformal invariants, which generalizes the conformally
invariant sphere theorem of Chang-Gursky-Yang \cite{CGY}, i.e., Theorem D
under the strict inequality assumption, and relies on Chen-Tang-Zhu's classification on four-manifolds with positive isotropic curvature \cite{CTZ}.

In this note, we refine Theorems B and E due to Gursky, and obtain Theorem 1.1 as follows:
\begin{theorem}
Let $(M^4, g)$ be a  $4$-dimensional   compact Riemannian manifold with
$\delta W^{\pm}=0$  and positive Yamabe constant $\mu([g])$. If
\begin{equation}\int_{M}|W_g^{\pm}|^{2}=\frac 16\mu^2([g]).\end{equation}
Then  $\nabla W^{\pm}=0$ and $W^{\pm}$ has exactly two distinct eigenvalues at each point. Hence $(M^4, g)$ is a K\"{a}hler manifold of positive constant scalar curvature.
\end{theorem}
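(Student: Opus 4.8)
The plan is to read the hypothesis (1.2) as the equality case of the sharp integral estimate
\[
\int_M|W^+|^2\,dv_g\ \ge\ \tfrac16\,\mu^2([g]),
\]
valid whenever $\delta W^+=0$, $\mu([g])>0$ and $W^+\not\equiv 0$ (this is the estimate underlying the refinement of Theorems B and E), and then to force every inequality entering its proof to become a pointwise identity. Reversing the orientation interchanges $W^+$ and $W^-$, so it suffices to treat the $+$ case. I would work throughout with the given metric $g$ itself, since $\delta W^+=0$ is not conformally invariant and so cannot be transferred to a Yamabe minimizer.

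The analytic engine is Derdzinski's Weitzenböck formula for metrics with $\delta W^+=0$, which expresses $\tfrac12\Delta|W^+|^2$ as the sum of $|\nabla W^+|^2$, the curvature term $\tfrac{R}{2}|W^+|^2$, and a cubic term proportional to $\det W^+$, where $W^+$ is viewed as a trace-free symmetric endomorphism of $\wedge^2_+$. Into this identity I would feed two sharp pointwise inequalities: the refined Kato inequality $|\nabla|W^+||^2\le\tfrac35|\nabla W^+|^2$, available precisely because $\delta W^+=0$; and the elementary eigenvalue bound $\det W^+\le\tfrac{1}{3\sqrt6}|W^+|^3$ for trace-free symmetric $3\times3$ tensors, whose equality case is exactly that the eigenvalues have the form $(-a,-a,2a)$, i.e.\ one simple positive and one repeated negative eigenvalue. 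On $\{|W^+|>0\}$ these combine into a differential inequality for $|W^+|$; testing the Yamabe functional against $\varphi=|W^+|^{1/2}$, for which $\int\varphi^4=\int|W^+|^2$, and using $\mu([g])\le Q_g(\varphi)$, produces the displayed estimate, the constant $\tfrac16$ being dictated by the Kähler eigenvalue profile $(-a,-a,2a)$.

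Under (1.2) all three inequalities must hold with equality everywhere. Equality in the eigenvalue bound gives immediately that $W^+$ has exactly two distinct eigenvalues at each point, one simple and one double. Equality in the test-function step means $\varphi=|W^+|^{1/2}$ is extremal for $\mu([g])$ and hence solves the Yamabe equation; comparing this equation with the now-rigid Derdzinski identity forces the extremal $\varphi$, and with it $|W^+|$ and $R$, to be constant, so $\nabla|W^+|=0$. Equality in the refined Kato inequality together with $\nabla|W^+|=0$ then upgrades this to $\nabla W^+=0$, establishing the first two assertions. For the final assertion, a parallel $W^+$ with a simple eigenvalue has a parallel one-dimensional eigenbundle in $\wedge^2_+$ spanned by a parallel unit self-dual two-form $\omega$; writing $\omega=g(J\cdot,\cdot)$ yields an almost complex structure with $\nabla J=0$, so $(M^4,g)$ is Kähler (one may equally invoke Derdzinski's theorem that $\delta W^+=0$ with two distinct eigenvalues makes $g$ conformal to a Kähler metric, the conformal factor being constant here). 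Since $R$ is constant and positivity of $R$ follows from $\mu([g])>0$, the metric is Kähler of positive constant scalar curvature.

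The step I expect to be the main obstacle is extracting $\nabla|W^+|=0$: one must show that equality in the test-function inequality genuinely forces the extremal $\varphi$ to be constant rather than merely critical, which is the delicate constant-bookkeeping that also pins down the sharp value $\tfrac16$ and is exactly where the refinement of Gursky's Theorems B and E lies. A secondary technical point is the behaviour of $W^+$ on its zero set, since the differential inequality is valid only where $|W^+|>0$; here I would apply the strong maximum principle (or unique continuation) to the elliptic identity satisfied by $|W^+|$ to conclude that $W^+$ either vanishes identically—the excluded anti-self-dual case—or is nowhere zero, which both legitimizes dividing by $|W^+|$ and guarantees that $J$ is globally defined.
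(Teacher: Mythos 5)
Your overall architecture is the paper's own: Derdzinski's Weitzenböck formula (2.1), Gursky's refined Kato inequality, the determinant bound (2.3) with its two-eigenvalue equality case, a Yamabe test function, equality-case rigidity, and finally Derdzinski's Proposition 5 for the K\"ahler conclusion. But the crucial quantitative step fails as you set it up: the test exponent is wrong. Taking $\varphi=|W^+|^{1/2}$, i.e.\ $\alpha=\tfrac12$ in the paper's computation (2.5)--(2.6), yields $\tfrac{\sqrt{6}}{4}\int\varphi^4\geq \tfrac43\int|\nabla\varphi|^2+\tfrac14\int R\varphi^2$. The four-dimensional Yamabe quotient weights gradient to scalar-curvature terms in the ratio $6:1$, so splitting off $\mu([g])(\int\varphi^4)^{1/2}$ leaves the remainder $\tfrac{1}{36}\int R\,|W^+|$, which has no sign (only $\mu([g])>0$ is assumed, $R$ need not be pointwise nonnegative, and, as you correctly note, you cannot conformally normalize $g$ because $\delta W^+=0$ is not conformally invariant); even granting $R\geq0$, the resulting bound is $\int|W^+|^2\geq\tfrac{32}{243}\mu^2([g])$, strictly weaker than $\tfrac16\mu^2([g])$. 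The sharp constant is obtained exactly at $\alpha=\tfrac13$, i.e.\ test function $|W^+|^{1/3}$: there $(2-\tfrac{1}{3\alpha}):\tfrac{\alpha}{2}=6:1$ and the leftover coefficient $\tfrac{(3\alpha-1)^2}{18\alpha}$ vanishes identically, which is precisely the ``delicate constant-bookkeeping'' you flagged as the main obstacle. With $\alpha=\tfrac13$ the cubic term has exponent $2\alpha+1=\tfrac53\neq 4\alpha$, so the paper needs the additional Cauchy--Schwarz step $\int u^{5/3}\leq(\int u^{4/3})^{1/2}(\int u^2)^{1/2}$ to isolate $\int|W^+|^2$ (inequality (2.7)). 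Since your inequality is not sharp, the equality analysis at $\tfrac16\mu^2([g])$ never gets started.

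The second divergence is the mechanism forcing $|W^+|$ constant. In the paper it is the equality case of that Cauchy--Schwarz step: $u^{2/3}$ proportional to $u$ forces $u$ to be constant on its support, hence, by continuity and connectedness and since $\int u^2=\tfrac16\mu^2([g])>0$, a positive constant; this also disposes of your zero-set worry with no maximum principle or unique continuation needed. Your proposed substitute --- comparing the Yamabe equation of the extremal with the rigid Weitzenb\"ock identity --- is not carried out, and in your setup nothing forces a Yamabe extremal to be constant. Once $u$ and $R$ are constant, equality in the integrated identity gives $R=\sqrt{6}\,|W^+|$, and (2.1) then collapses to $0=2|\nabla W^+|^2$ directly, with the determinant equality giving the eigenvalue profile $\{-\tfrac{R}{12},-\tfrac{R}{12},\tfrac{R}{6}\}$; your closing steps (parallel simple eigenbundle, parallel self-dual two-form, K\"ahler structure, and the orientation-reversal reduction to the $+$ case) are correct and agree with the paper's appeal to Proposition 5 of Derdzinski.
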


Combing some results due to Gursky and  Chen-Tang-Zhu's classification on four-manifolds with positive isotropic curvature with Theorem 1.1, we give some rigidity theorems on four-manifolds with positive Yamabe constant.

\begin{theorem}
Let $(M^4, g)$  be a $4$-dimensional   compact Riemannian manifold  with harmonic Weyl tensor and   positive Yamabe constant. If
 \begin{equation*}\int_M|W|^{2}=\frac{1}{6}{\mu^2([g])},\end{equation*} then  $(M^4, {g})$
 is  a $\mathbb{CP}^2$ with the Fubini-Study metric.
\end{theorem}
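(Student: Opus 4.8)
The plan is to reduce to the one-sided situation governed by Theorem 1.1 and then upgrade the resulting K\"{a}hler metric to an Einstein metric. First I would record the elementary but crucial dimension-four fact that the harmonic Weyl condition splits. Since the Hodge star is parallel, $\nabla$ commutes with $\ast$, so $\delta W^{+}$ and $\delta W^{-}$ are respectively the self-dual and anti-self-dual parts of $\delta W$ in the form slot; as these lie in orthogonal summands of $T^{*}M\otimes\wedge^{2}$, the hypothesis $\delta W=0$ forces $\delta W^{+}=0$ and $\delta W^{-}=0$ simultaneously. Thus both half-harmonic hypotheses needed to run Theorem 1.1 for either sign are in force.

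Next I would invoke the sharpened form of Gursky's Theorem B established in the course of proving Theorem 1.1: under $\delta W^{\pm}=0$ and $\mu([g])>0$, for each sign either $W^{\pm}\equiv 0$ or $\int_{M}|W^{\pm}|^{2}\ge\frac16\mu^{2}([g])$. Since $|W|^{2}=|W^{+}|^{2}+|W^{-}|^{2}$, the hypothesis reads $\int_{M}|W^{+}|^{2}+\int_{M}|W^{-}|^{2}=\frac16\mu^{2}([g])$. If both $W^{+}$ and $W^{-}$ were nontrivial the inequality would give $\int_{M}|W|^{2}\ge\frac13\mu^{2}([g])$, a contradiction; and they cannot both vanish, for $\mu([g])>0$ makes $\int_{M}|W|^{2}=\frac16\mu^{2}([g])>0$. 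Hence exactly one of them vanishes, say $W^{-}\equiv 0$ after possibly reversing orientation, and then $\int_{M}|W^{+}|^{2}=\frac16\mu^{2}([g])$. Applying Theorem 1.1 to $W^{+}$, the metric $g$ is K\"{a}hler with positive constant scalar curvature $R$, with $\nabla W^{+}=0$ and $W^{+}$ carrying the two eigenvalues $\frac{R}{6}$ (simple) and $-\frac{R}{12}$ (double).

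The decisive step is to promote this to an Einstein metric, exploiting that we are now self-dual ($W^{-}\equiv 0$). Let $\omega$ and $\rho$ denote the K\"{a}hler form and the Ricci form, and write $\rho=\frac{R}{4}\omega+\rho_{0}$ with $\rho_{0}$ the primitive $(1,1)$-part. Because $R$ is constant we have $\delta\rho=0$, while $d\rho=0$ always, so $\rho$ is harmonic; hence its self-dual and anti-self-dual parts are separately harmonic. Now $\frac{R}{4}\omega\in\wedge^{2}_{+}$, whereas a primitive real $(1,1)$-form obeys $\ast\rho_{0}=-\rho_{0}$, so $\rho_{0}\in\wedge^{2}_{-}$ is a harmonic anti-self-dual form. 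The Weitzenb\"{o}ck formula $0=\Delta_{H}\rho_{0}=\nabla^{*}\nabla\rho_{0}-2W^{-}(\rho_{0})+\frac{R}{3}\rho_{0}$ collapses, since $W^{-}\equiv 0$, to $\nabla^{*}\nabla\rho_{0}+\frac{R}{3}\rho_{0}=0$; pairing with $\rho_{0}$ and integrating yields $\int_{M}|\nabla\rho_{0}|^{2}+\frac{R}{3}\int_{M}|\rho_{0}|^{2}=0$. As $R>0$ this forces $\rho_{0}\equiv 0$, that is $\rho=\frac{R}{4}\omega$ and $Ric=\frac{R}{4}g$, so $g$ is K\"{a}hler--Einstein.

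Finally I would invoke Hitchin's classification of compact self-dual Einstein four-manifolds of positive scalar curvature, which are only the round $\mathbb{S}^{4}$ and $\mathbb{CP}^{2}$ with the Fubini--Study metric. Since $R>0$ makes $W^{+}$ nontrivial, the conformally flat round sphere is excluded (it is in any case not K\"{a}hler), leaving $(M^{4},g)=(\mathbb{CP}^{2},g_{FS})$ up to homothety, the scale being fixed by $\mu([g])$. I expect the main obstacle to lie not in the Weitzenb\"{o}ck computation, which is robust because the $W^{-}$ term drops out and only the positive scalar term $\frac{R}{3}$ remains, but rather in correctly isolating and citing the one-sided integral inequality $\int_{M}|W^{\pm}|^{2}\ge\frac16\mu^{2}([g])$ with its rigidity (Theorem 1.1), and in the orientation and convention bookkeeping that places the primitive Ricci form in $\wedge^{2}_{-}$, so that the hypothesis $W^{-}\equiv 0$ is precisely what annihilates it.
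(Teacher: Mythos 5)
Your proposal is correct, and its skeleton coincides with the paper's: both split $\delta W=0$ into $\delta W^{+}=0$ and $\delta W^{-}=0$, use Theorem E (your ``either $W^{\pm}\equiv 0$ or $\int_M|W^{\pm}|^{2}\geq\frac16\mu^2([g])$'' is exactly its contrapositive, not something extra from the proof of Theorem 1.1) to force one half of the Weyl tensor to vanish while the other carries $\int_M|W^{\pm}|^{2}=\frac16\mu^2([g])$, then apply Theorem 1.1 to get a K\"{a}hler metric of positive constant scalar curvature with parallel half-Weyl tensor, and finish with Hitchin's classification of half-conformally-flat Einstein metrics, excluding the round case because $W^{+}\neq0$. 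The genuine divergence is in the two intermediate steps. Where the paper treats the case $W^{+}=0$ separately and kills it by Derdzi\'{n}ski's Corollary~1 (an anti-self-dual K\"{a}hler surface has $R=0$, contradicting $\mu([g])>0$), you reverse orientation to reduce to a single case; since the hypotheses and the (unoriented, isometric) conclusion are orientation-independent, this is legitimate and in fact cleaner, as it sidesteps the orientation bookkeeping hidden in applying Theorem 1.1 to $W^{-}$. More substantially, to promote the K\"{a}hler constant-scalar-curvature metric to Einstein, the paper quotes Derdzi\'{n}ski's Lemma~7 ($\nabla W^{+}=0$ with $W^{-}=0$ gives local symmetry) followed by Bourguignon's theorem (harmonic curvature and nonzero signature give Einstein), whereas you argue directly: $R$ constant makes the Ricci form harmonic, its primitive part $\rho_0$ is a harmonic anti-self-dual $2$-form, and the Weitzenb\"{o}ck formula $\Delta_H=\nabla^{*}\nabla-2W^{-}+\frac{R}{3}$ on $\wedge^2_{-}$ with $W^{-}\equiv0$ and $R>0$ forces $\rho_0\equiv0$, i.e.\ $Ric=\frac{R}{4}g$. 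Your route is more elementary and self-contained --- it uses only the standard Weitzenb\"{o}ck formula already underlying Remark 2.2 plus the K\"{a}hler fact that primitive $(1,1)$-forms are anti-self-dual --- while the paper's route buys the stronger intermediate conclusion that $(M^4,g)$ is locally symmetric; both then conclude identically via Hitchin that $(M^4,g)$ is $\mathbb{CP}^2$ with the Fubini-Study metric.
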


\begin{theorem}
Let $(M^4, g)$  be a $4$-dimensional   compact Riemannian manifold  with   positive Yamabe constant. If
 \begin{equation*}\int_M|W|^{2}\leq\frac{1}{6}{\mu^2([g])},\end{equation*} then  1) $\tilde{g}$ is a Yamabe minimizer and $(M^4, \tilde{g})$  is a $\mathbb{CP}^2$ with the Fubini-Study metric;

 2) $(M^4, g)$ is
diffeomorphic to $\mathbb{S}^4,$ $\mathbb{RP}^4$, $\mathbb{S}^3\times \mathbb{R}/G$ or a connected sum of them. Here
$G$ is a cocompact fixed point free discrete subgroup of the isometry group
of the standard metric on $\mathbb{S}^3\times \mathbb{R}$.
\end{theorem}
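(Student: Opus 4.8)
The plan is to exploit that in dimension four both $\int_M|W|^2\,dv$ and $\mu([g])$ are invariants of the conformal class $[g]$, so the hypothesis $\int_M|W|^2\le\frac16\mu^2([g])$ is itself conformally invariant. First I would replace $g$ by a Yamabe minimizer $\tilde g\in[g]$, which exists because $\mu([g])>0$ and carries constant scalar curvature $\tilde R>0$; for such a metric $\mu^2([g])=\tilde R^2\,\mathrm{Vol}(\tilde g)=\int_M\tilde R^2\,dv_{\tilde g}$, so the pinching reads $\int_M\big(|W|^2-\frac16\tilde R^2\big)\,dv_{\tilde g}\le0$. Splitting $|W|^2=|W^+|^2+|W^-|^2$ and recording the Gauss--Bonnet and Hirzebruch signature identities, in particular $8\pi^2(2\chi(M)+3\sigma(M))=\int_M|W^+|^2-\int_M|\mathring{Ric}|^2+\frac1{12}\int_M R^2$ together with its mirror for $W^-$, sets up the bookkeeping linking the pinching to the eigenvalue structure of $W^\pm$ acting on $\Lambda^2_\pm$.

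The conceptual point is that $\frac16\mu^2([g])$ is precisely the sharp integral threshold beyond which the conformal class can no longer contain a metric of positive isotropic curvature, the unique obstruction being the Fubini--Study $\mathbb{CP}^2$. Concretely, I would run the dichotomy behind Theorem D: using the conformal invariance of $\int_M|W|^2$ together with $\int_M R^2=\mu^2$ on $\tilde g$, solve the associated elliptic (fully nonlinear, Yamabe-type) equation of Chang--Gursky--Yang \cite{CGY} for a conformal factor producing $\hat g=e^{2w}\tilde g$ whose curvature is pointwise positive in the isotropic sense, i.e. whose self-dual and anti-self-dual Weyl eigenvalues are dominated pointwise by $\hat R/6$. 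The bound $\int_M|W|^2\le\frac16\mu^2$ is exactly what guarantees solvability with the correct sign, and strictness there upgrades the outcome to genuine positive isotropic curvature for $\hat g$.

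With a metric of positive isotropic curvature in hand, I would invoke Chen--Tang--Zhu's classification \cite{CTZ} to conclude that $M^4$ is diffeomorphic to $\mathbb{S}^4$, $\mathbb{RP}^4$, $\mathbb{S}^3\times\mathbb{R}/G$ with $G$ a cocompact fixed-point-free discrete subgroup of $\mathrm{Isom}(\mathbb{S}^3\times\mathbb{R})$, or to a connected sum of such pieces; this is conclusion (2). The remaining borderline situation is when the deformation degenerates and no conformal representative of strictly positive isotropic curvature exists. Here the inequalities driving the construction are saturated, which should force the rigidity $\delta W=0$ (harmonic Weyl tensor) together with the equality $\int_M|W|^2=\frac16\mu^2([g])$; at that point Theorem 1.2 applies verbatim and identifies $(M^4,\tilde g)$ with $\mathbb{CP}^2$ equipped with the Fubini--Study metric, with $\tilde g$ the Yamabe minimizer, giving conclusion (1).

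The main obstacle is the second step: transferring the conformally invariant integral pinching into pointwise curvature positivity for a single conformal representative, which requires the a priori estimates and regularity for the fully nonlinear conformal equation (the technical heart of the Chang--Gursky--Yang method), and, just as importantly, a precise analysis of the degenerate locus to certify that the only way solvability can fail under $\int_M|W|^2\le\frac16\mu^2$ is the Fubini--Study $\mathbb{CP}^2$. What makes the constant sharp and the dichotomy clean is that $\mathbb{CP}^2$ sits exactly on the boundary of the positive-isotropic-curvature cone --- its top self-dual Weyl eigenvalue equals $\hat R/6$, so strict positivity, and hence the Chen--Tang--Zhu classification, just fails there --- which is precisely the rigidity case and the reason it must be split off and handled separately via Theorem 1.2.
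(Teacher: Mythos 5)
Your overall skeleton---conformally deform to positive isotropic curvature, invoke Chen--Tang--Zhu \cite{CTZ}, and split off a rigid borderline case---matches the paper's, but the pivotal middle step is wrong as proposed. The paper explicitly \emph{avoids} the fourth-order fully nonlinear equation of \cite{CGY} (see the opening of its Section 5), and for good reason: solvability of that equation with the correct sign requires essentially the hypothesis $\int_M|W|^2\leq 16\pi^2\chi(M)$ of Theorem D, which via the Gauss--Bonnet identity $16\int_M\sigma_2(A)=32\pi^2\chi(M)-\int_M|W|^2$ and Gursky's inequality $96\int_M\sigma_2(A)\leq\mu^2([g])$ is \emph{strictly stronger} than $\int_M|W|^2\leq\frac16\mu^2([g])$; indeed the paper deduces Theorem D from Theorem 1.3, not conversely, so your claim that the bound $\frac16\mu^2$ ``is exactly what guarantees solvability'' is unjustified. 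There is also a structural obstruction your scheme cannot see: positive isotropic curvature requires pointwise control of \emph{each half} of the Weyl tensor separately, at the level $|W^{\pm}|^2<\frac{1}{12}R^2$, and the total bound $\int_M|W|^2\leq\frac16\mu^2$ gives no per-half integral bound without topological input. The paper first applies Theorem F to get $b_2^{\pm}=0$, hence $\sigma(M)=0$ by the Hirzebruch formula, hence $\int_M|W^{+}|^2=\int_M|W^{-}|^2=\frac12\int_M|W|^2<\frac{1}{12}\mu^2([g])$, and only then produces the pointwise bounds by Gursky's \emph{second-order} modified-Yamabe-invariant argument (Section 3 of \cite{G3}, Section 2 of \cite{CZ}). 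If all the Weyl curvature concentrates in one half, say $W^{-}=0$ and $\int_M|W^{+}|^2=\frac16\mu^2$---precisely the $\mathbb{CP}^2$ case---no conformal representative with $|W^{+}|^2<\frac{1}{12}R^2$ exists even though the total integral hypothesis holds with equality; so your dichotomy ``strict total inequality $\Rightarrow$ PIC, equality $\Rightarrow$ rigidity'' mis-draws the boundary between the two alternatives, which is governed by the distribution of $|W|^2$ between $W^{+}$ and $W^{-}$ (i.e.\ by $b_2^{\pm}$ and $\sigma(M)$), not by strictness of the total inequality alone.

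The borderline analysis has a second genuine gap: you assert that degeneration of the deformation ``should force $\delta W=0$'' and that Theorem 1.2 then ``applies verbatim,'' but nothing in the hypotheses of Theorem 1.3 gives a harmonic Weyl tensor, and no saturation of these integral inequalities produces the pointwise differential condition $\delta W=0$; Theorem 1.2 is simply inapplicable here, since harmonic Weyl tensor is one of its hypotheses. The paper's actual route through the equality case is different and is what closes the argument: by the Theorem E/F threshold, the only non-PIC configuration under $\int_M|W|^2\leq\frac16\mu^2$ is $W^{\mp}\equiv 0$ with $\int_M|W^{\pm}|^2=\frac16\mu^2([g])$; the signature formula then forces $b_2^{\pm}\neq 0$, so Theorem G (a Weitzenb\"{o}ck argument for a nonzero harmonic self-dual $2$-form, requiring no harmonic-Weyl assumption) shows $g$ is conformal to a K\"{a}hler metric of positive constant scalar curvature; Corollary 1 of \cite{D} rules out the orientation $W^{+}=0$ (a scalar-flat K\"{a}hler metric contradicts $\mu([g])>0$), and in the case $W^{-}=0$ Lemma 7 of \cite{D}, Bourguignon's theorem and Hitchin's classification identify $(M^4,\tilde g)$ as $\mathbb{CP}^2$ with the Fubini--Study metric. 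Without a substitute for Theorem G and this Derdzi\'{n}ski-type analysis, your proposal cannot certify that the Fubini--Study $\mathbb{CP}^2$ is the \emph{only} obstruction, which is the heart of conclusion 1).
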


\begin{theorem}
Let $(M^4, g)$  be a $4$-dimensional   compact Riemannian manifold  with harmonic Weyl tensor and   positive Yamabe constant. If
 \begin{equation*}\int_M|W|^{2}<\frac{64}{3}\pi^2\chi(M).\end{equation*}
  Then
 1) $\tilde{g}$ is a Yamabe minimizer and $(M^4, \tilde{g})$  is  the stand sphere $\mathbb{S}^4$ or the real projective space $\mathbb{RP}^4$;

2) $(M^4, {g})$
 is  a $\mathbb{CP}^2$ with the Fubini-Study metric.
\end{theorem}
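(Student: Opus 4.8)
The plan is to feed the half-harmonic Weyl hypothesis into Gursky's Theorem B on both orientations and then peel off the (anti-)self-dual case. Since $\delta W=0$ forces $\delta W^{+}=0$ and $\delta W^{-}=0$, Theorem B applies to $(M^4,g)$ with either orientation (orientation reversal swaps $W^{+}\leftrightarrow W^{-}$ and $\sigma\to-\sigma$). Thus, unless $(M^4,g)$ is self-dual or anti-self-dual, one has simultaneously $\int_M|W^{+}|^{2}\ge\frac{16}{3}\pi^2(2\chi+3\sigma)$ and $\int_M|W^{-}|^{2}\ge\frac{16}{3}\pi^2(2\chi-3\sigma)$. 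Adding these and using $|W|^2=|W^{+}|^2+|W^{-}|^2$ yields $\int_M|W|^{2}\ge\frac{64}{3}\pi^2\chi$, contradicting the strict hypothesis. Hence the constant $\frac{64}{3}\pi^2$ is precisely the threshold that forces $W^{+}\equiv0$ or $W^{-}\equiv0$; reversing orientation if necessary, I would assume $W^{-}\equiv0$, so that $\int_M|W|^{2}=\int_M|W^{+}|^{2}=48\pi^2\sigma$ by $(1.1)$.

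Next I would split according to whether $W^{+}$ vanishes identically. If also $W^{+}\equiv0$, then $(M^4,g)$ is conformally flat and the hypothesis reads $0<\frac{64}{3}\pi^2\chi$, so $\chi(M)>0$. A compact conformally flat four-manifold with positive Yamabe constant is covered by the round $\mathbb{S}^4$, and the only quotients with positive Euler characteristic are $\mathbb{S}^4$ and $\mathbb{RP}^4$ (the $\mathbb{S}^1\times\mathbb{S}^3$-type quotients, having $\chi=0$, are excluded). Passing to the Yamabe minimizer $\tilde g$ realizes the round metric, which is conclusion 1).

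If instead $W^{+}\not\equiv0$, then $(M^4,g)$ is self-dual with $\delta W^{+}=0$ and positive Yamabe constant. Substituting $\int_M|W^{+}|^{2}=48\pi^2\sigma$ into Gursky's lower bound rewrites it as $\sigma\ge\frac13\chi$, with equality exactly in the Theorem B equality case, namely a positive Einstein K\"ahler metric (or its anti-holomorphic quotient); together with $W^{-}=0$ this forces $\mathbb{CP}^2$ with the Fubini-Study metric (equivalently, $\int_M|W^{+}|^{2}=16\pi^2\chi=\frac16\mu^2([g])$, so Theorem 1.2 applies). The strict hypothesis gives $\sigma<\frac49\chi$, so I must show that for self-dual metrics with $\delta W^{+}=0$ and positive Yamabe constant the ratio $\sigma/\chi$ cannot lie in the open interval $(\frac13,\frac49)$: the self-dual models with larger ratio, the connected sums $k\#\mathbb{CP}^2$ with $\sigma/\chi=k/(k+2)\ge\frac12$, are ruled out by the strict pinching, while the Derdzi\'nski-type conformally-K\"ahler structure attached to $\delta W^{+}=0$ (see \cite{D}) together with the Chen-Tang-Zhu classification \cite{CTZ} should leave only the rigid endpoint $\sigma=\frac13\chi$. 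At that endpoint Theorem B (or Theorem 1.1 and Theorem 1.2) identifies $(M^4,g)$ with $\mathbb{CP}^2$, which is conclusion 2).

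The main obstacle is exactly this last step: excluding self-dual, half-harmonic, positive-Yamabe four-manifolds with $\sigma/\chi$ strictly between $\frac13$ and $\frac49$, i.e. proving that once the manifold is self-dual and below the $\frac49$ threshold the refined Theorem B inequality is forced to be an equality (equivalently that $\tilde g$ is Einstein with $\nabla W^{+}=0$). The clean portion — reducing to the self-dual/anti-self-dual dichotomy via Theorem B and the additivity of the Weyl norm, and disposing of the conformally flat case — is routine; but pinning down $\mathbb{CP}^2$ rather than merely some self-dual positive-scalar-curvature manifold is where the rigidity of Theorems 1.1 and 1.2 and the positive-isotropic-curvature classification must be combined, and I expect that to be the delicate part of the argument.
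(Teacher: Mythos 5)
Your opening reduction is sound and is essentially the paper's first step in different clothing: the paper derives $\int_M|W|^2<\frac13\mu^2([g])$ from Gursky's estimate (2.12) together with the Chern--Gauss--Bonnet identity $\int_M|W|^2+4\int_M|\mathring{Ric}|^2-\frac13\int_M R^2=3\int_M|W|^2-64\pi^2\chi(M)$ and then invokes Theorem E, while you apply Theorem B in both orientations and add; either way one lands on $W^+=0$ or $W^-=0$. But there is a genuine gap exactly where you flag one, and the tools you reach for there cannot close it: excluding $\sigma/\chi\in(\frac13,\frac49)$ has nothing to do with Chen--Tang--Zhu (positive isotropic curvature plays no role once $\int_M|W^{\pm}|^2\geq\frac16\mu^2([g])$) or with a Derdzi\'{n}ski-type conformally-K\"{a}hler analysis. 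The paper closes it by a short integrality argument that you are missing. Since $\int_M|W|^2+4\int_M|\mathring{Ric}|^2-\frac13\int_M R^2=\int_M|W|^2-32\int_M\sigma_2(A)$ is conformally invariant, evaluating at the Yamabe minimizer $\tilde g$ (where $\int_M R_{\tilde g}^2=\mu^2([g])$) gives $\int_M|\mathring{Ric}_{\tilde g}|^2<\frac{1}{12}\mu^2([g])$, so Theorem F ii) yields $b_1(M)=0$; and $W^-=0$ with Theorem F i) yields $b_2^-=0$. Hence $\sigma(M)=b_2^+$ and $\chi(M)=2+b_2^+$ are integers tied together, and your constraint $\frac13\chi\leq\sigma<\frac49\chi$ becomes $\frac13(2+b_2^+)\leq b_2^+<\frac49(2+b_2^+)$, i.e.\ $b_2^+\geq1$ and $5b_2^+<8$, forcing $b_2^+=1$, $\sigma=1$, $\chi=3$. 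This lands precisely on the equality case $\int_M|W^+|^2=48\pi^2=\frac{16}{3}\pi^2(2\chi+3\sigma)$, whence (Remark 2.3) $\int_M|W|^2=\frac16\mu^2([g])$ and Theorem 1.2 delivers $\mathbb{CP}^2$. Without this Betti-number bootstrap, the interval $(\frac13,\frac49)$ genuinely cannot be excluded by pinching alone, and your proposal does not prove the theorem.

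Two secondary defects. In the conformally flat case your assertion that a compact conformally flat four-manifold with positive Yamabe constant is covered by the round $\mathbb{S}^4$ is false ($\mathbb{S}^1\times\mathbb{S}^3$ is a counterexample); what is true, and what the paper uses, is either Gursky's classification for $\mu>0$, $\chi\geq0$ (Remark 3.4) with $\chi>0$ excluding the $\mathbb{S}^1\times\mathbb{S}^3$ quotients, or, as in the actual proof, Theorem 1.13 of \cite{FX} applied to the Yamabe minimizer $\tilde g$, for which $4\int_M|\mathring{Ric}_{\tilde g}|^2<\frac13\mu^2([g])$ --- and the latter route is needed anyway to obtain the stated \emph{isometric} conclusion that $(M^4,\tilde g)$ is the round sphere or $\mathbb{RP}^4$, not merely conformally so. Also, the paper does not dispose of the anti-self-dual branch by reversing orientation: in the case $W^+=0$, $\int_M|W^-|^2\geq\frac16\mu^2([g])$, it runs the same count and then derives a contradiction from Derdzi\'{n}ski's Corollary 1 (a K\"{a}hler four-manifold with $W^+=0$ has vanishing scalar curvature, incompatible with $\mu([g])>0$), so that branch is empty rather than symmetric; your orientation-reversal shortcut is defensible for the unoriented conclusion but should be stated with this care.
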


\begin{theorem}
Let $(M^4, g)$  be a $4$-dimensional   compact Riemannian manifold  with harmonic Weyl tensor and   positive Yamabe constant. If
 \begin{equation*}\int_M|W|^{2}=\frac{64}{3}\pi^2\chi(M),\end{equation*} then
 1) $\tilde{g}$ is a Yamabe minimizer and $(M^4, \tilde{g})$  is the manifold which is isometrically covered by $\mathbb{S}^1\times \mathbb{S}^{3}$ with the product metric
or the manifold which is isometrically covered by $\mathbb{S}^1\times \mathbb{S}^{3}$ with a rotationally symmetric Derdzi\'{n}ski metric  (see \cite{{Ca},{D1}});

 2)  $(M^4, {g})$ is isometric to a quotient of $\mathbb{S}^2\times \mathbb{S}^{2}$ with the product metric.
\end{theorem}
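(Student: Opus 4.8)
The plan is to recognise the hypothesis as the borderline of a summed Gursky inequality. Since $\delta W=0$ splits as $\delta W^{+}=0$ and $\delta W^{-}=0$, Theorem B may be applied both to $(M^4,g)$ and to $(M^4,g)$ carrying the opposite orientation, under which $W^{+}$ and $W^{-}$ are interchanged and $\sigma(M)$ changes sign. First I would record the two resulting estimates: if $W^{+}\not\equiv0$ then $\int_M|W^{+}|^2\ge\frac{16}{3}\pi^2\bigl(2\chi(M)+3\sigma(M)\bigr)$, and if $W^{-}\not\equiv0$ then $\int_M|W^{-}|^2\ge\frac{16}{3}\pi^2\bigl(2\chi(M)-3\sigma(M)\bigr)$. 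Adding them, the signature contributions cancel by (1.1) and one obtains $\int_M|W|^2\ge\frac{64}{3}\pi^2\chi(M)$. Hence the hypothesis is precisely the equality case, and, provided both $W^{\pm}\not\equiv0$, it forces equality in each of the two Theorem B inequalities separately.

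Next I would extract rigidity from these two equalities. By the equality statement of Theorem B, refined through Theorem 1.1, equality for $W^{+}$ forces $g$ to be a positive Einstein metric, K\"{a}hler (or the quotient of a K\"{a}hler manifold by a free anti-holomorphic isometric involution), with $\nabla W^{+}=0$ and $W^{+}$ of the degenerate, two-eigenvalue type; equality for $W^{-}$ yields the same for the reversed orientation, so $g$ is Einstein and K\"{a}hler for both orientations. Being Einstein gives $\nabla\mathring{Ric}=0$, and together with $\nabla W^{\pm}=0$ this forces $\nabla Rm=0$, so $(M^4,g)$ is locally symmetric. A positive, locally symmetric Einstein four-manifold that is K\"{a}hler for both orientations, with each $W^{\pm}$ of two-eigenvalue K\"{a}hler type, is a quotient of $\mathbb{S}^2\times\mathbb{S}^2$ with the product metric, which is conclusion 2).

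I would then treat the degenerate possibilities in which the summing step fails. If $W\equiv0$, then $\int_M|W|^2=0$ forces $\chi(M)=0$ and $(M^4,g)$ is locally conformally flat with positive scalar curvature; the classification of such four-manifolds (through the positive isotropic curvature structure theory of Chen--Tang--Zhu \cite{CTZ}) forces $M$ to be a quotient of $\mathbb{S}^1\times\mathbb{S}^3$, so $H^1(M^4)\neq0$ and Gursky's Theorem C applies at equality; combined with the examples of \cite{Ca,D1} this identifies the Yamabe minimizer $\tilde g$ as either the product metric or a rotationally symmetric Derdzi\'{n}ski metric, which is conclusion 1). The remaining possibility is that exactly one of $W^{\pm}$ vanishes while $\chi(M)>0$; here (1.1) forces the nonzero piece to carry all of the signature, and I would rule the configuration out by showing it is incompatible with the sharp rigidity available --- the only self-dual positive K\"{a}hler--Einstein model is $\mathbb{CP}^2$, whose Weyl energy $48\pi^2$ is strictly less than $\frac{64}{3}\pi^2\chi(\mathbb{CP}^2)=64\pi^2$ --- so no such manifold attains the equality.

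I expect the main obstacle to be exactly this exclusion of the one-sided self-dual or anti-self-dual configuration with $\chi(M)>0$: Theorem B there yields only a strict inequality, so the exclusion must draw on the equality analysis of Theorem A, on Theorem 1.1, and on the classification of \cite{CTZ}, and one must check that no extraneous self-dual example with positive Yamabe constant survives. A secondary difficulty lies in the two geometric identifications --- passing from \emph{locally symmetric, Einstein, K\"{a}hler for both orientations} to an isometric quotient of $\mathbb{S}^2\times\mathbb{S}^2$, and from \emph{locally conformally flat with $H^1\neq0$} to the precise product/Derdzi\'{n}ski description of $\tilde g$ --- which rest on the external results of Calabi, Derdzi\'{n}ski and Chen--Tang--Zhu rather than on the integral estimates themselves.
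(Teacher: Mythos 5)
Your main-case device --- applying Theorem B to both orientations and summing so that the signature terms cancel via (1.1) --- is sound and genuinely different from the paper, which instead dichotomizes on whether $g$ is conformally Einstein using Gursky's bound (2.12) and reaches $\mathbb{S}^2\times\mathbb{S}^2$ through Corollary 1.9; at equality in both orientations your route (Theorem B equality plus Theorem 1.1, noting that equality in B forces $\int_M|W^{\pm}|^2=16\int_M\sigma_2(A)=\frac16\mu^2([g])$ by Theorem E and (2.12)) does yield an Einstein metric, K\"ahler for both orientations with parallel two-eigenvalue $W^{\pm}$, hence locally symmetric and a quotient of $\mathbb{S}^2\times\mathbb{S}^2$, which buys you conclusion 2) without the paper's separate argument that the conformal factor is constant. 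Your $W\equiv 0$ branch also matches the paper's $\chi(M)=0$ branch in substance, though the relevant classification is Gursky's (\cite{G},\cite{G2}), not \cite{CTZ}, and you should say explicitly that the Yamabe minimizer has constant scalar curvature and is conformally flat, hence has harmonic curvature, so the Catino/Derdzi\'nski-type classification (Theorem 1.15 of \cite{FX}, as the paper uses) identifies $\tilde g$.

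The genuine gap is exactly where you flagged it: the case where precisely one of $W^{\pm}$ vanishes and $\chi(M)>0$. Your proposed exclusion fails because it presupposes rigidity that is not available there: with, say, $W^{-}\equiv 0$, formula (1.1) gives $\int_M|W^{+}|^2=48\pi^2\sigma(M)$, and the hypothesis forces $9\sigma(M)=4\chi(M)$, so $\sigma(M)=\frac49\chi(M)>\frac13\chi(M)$ and Theorem B holds with \emph{strict} inequality --- no Einstein or K\"ahler structure is forced, so there is no ``self-dual positive K\"ahler--Einstein model'' to compare with $\mathbb{CP}^2$. The correct exclusion, which the paper supplies and your sketch lacks, is arithmetic-topological: by Chern--Gauss--Bonnet the hypothesis gives $96\int_M\sigma_2(A)=192\pi^2\chi(M)-6\int_M|W|^2=3\int_M|W|^2$, so in the Yamabe-minimizing metric $\int_M|\mathring{Ric}_{\tilde g}|^2=\frac1{12}\bigl(\mu^2([g])-3\int_M|W|^2\bigr)<\frac1{12}\mu^2([g])$ by (2.12); Theorem F then yields $b_1(M)=0$ and (since $\int_M|W^{-}|^2=0<\frac16\mu^2([g])$) $b_2^{-}(M)=0$, whence $\chi(M)=2+b_2^{+}$ and $\sigma(M)=b_2^{+}$, and $9\sigma(M)=4\chi(M)$ becomes $5b_2^{+}=8$, which has no integer solution. (The paper packages this slightly differently, bounding $3\le\chi(M)\le 5$ via $\mu^2([g])\le\mu^2([g_{\mathbb{S}^4}])=384\pi^2$ and observing $b_2^{\pm}=\frac{4\chi(M)}{9}$ is never an integer.) Without this Betti-number contradiction your case analysis does not close, so as written the proposal is incomplete at its decisive step.
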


\begin{theorem}
Let $(M^4, g)$  be a $4$-dimensional   compact Riemannian manifold which is not diffeomorphic to $\mathbb{S}^4$ or $\mathbb{RP}^4$  with  positive Yamabe constant. If
 \begin{equation*}16\pi^2\chi(M)<\int_M|W|^{2}\leq\frac{64}{3}\pi^2\chi(M),\end{equation*} then
 1) $\tilde{g}$ is a Yamabe minimizer and $(M^4, \tilde{g})$ is isometric to a quotient of  $\mathbb{S}^2\times \mathbb{S}^{2}$ with the product metric;

 2) $(M, g)$ has $\chi(M)=3$,  $b_1=0$ and $b_2=1$, where $b_i$ denotes the $i$-th Betti number of $M$, and has not harmonic Weyl tensor.
\end{theorem}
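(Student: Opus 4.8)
The plan is to combine the Gauss--Bonnet and signature formulas with Gursky's Theorems A and C, the refinement Theorem 1.1, the harmonic-Weyl rigidity of Theorems 1.4 and 1.5, and the Chen--Tang--Zhu classification, organizing the argument according to the decomposition $b_2=b_2^++b_2^-$ and the identity $\int_M|W^{\pm}|^2=\tfrac12\int_M|W|^2\pm 24\pi^2\sigma(M)$, which follows from (1.1). First I observe that the hypothesis forces $\chi(M)>0$, since otherwise $\tfrac{64}{3}\pi^2\chi(M)\le 0<\int_M|W|^2$ would fail; in particular all cases with $\chi=0$ (e.g. quotients of $\mathbb{S}^1\times\mathbb{S}^3$ or Derdzi\'nski metrics) are automatically ruled out. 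I may assume $M$ is orientable, as the non-orientable option $\mathbb{RP}^4$ is excluded by hypothesis.

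Next I would pin down the topology. Applying Theorem C with both orientations shows that $b_1\neq0$ would give $\int_M|W^+|^2\ge 8\pi^2(2\chi+3\sigma)$ and $\int_M|W^-|^2\ge 8\pi^2(2\chi-3\sigma)$; adding these yields $\int_M|W|^2\ge 32\pi^2\chi$, contradicting $\int_M|W|^2\le\tfrac{64}{3}\pi^2\chi<32\pi^2\chi$. Hence $b_1=0$, and by Poincar\'e duality $\chi=2+b_2$. I then run the same idea with Theorem A. If both $b_2^+>0$ and $b_2^->0$, the two instances of Theorem A add to $\int_M|W|^2\ge\tfrac{16}{3}\pi^2(4\chi)=\tfrac{64}{3}\pi^2\chi$, so the upper bound is saturated and equality holds in Theorem A for both orientations; subtracting the two equalities and comparing with (1.1) forces $\sigma=0$, so $\int_M|W^\pm|^2=\tfrac12\int_M|W|^2=\tfrac{32}{3}\pi^2\chi$. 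Equality in Theorem A makes the Yamabe minimizer K\"ahler--Einstein, whence $\int_M|W^\pm|^2=\tfrac16\int_M R^2=\tfrac16\mu^2([g])$; Theorem 1.1, applied in both orientations, then gives $\nabla W^\pm=0$ with two eigenvalues each, i.e. conclusion (1): $\tilde g$ is a quotient of $\mathbb{S}^2\times\mathbb{S}^2$.

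In the complementary case one of $b_2^\pm$ vanishes, say $b_2^-=0$, so $\sigma=b_2^+=b_2$. Using only $\int_M|W^-|^2\ge0$, i.e. $\int_M|W|^2\ge 48\pi^2\sigma=48\pi^2b_2$, together with $\int_M|W|^2\le\tfrac{64}{3}\pi^2(2+b_2)$, I obtain $144\,b_2\le 64(2+b_2)$, hence $b_2\le1$. When $b_2=1$ we get $\chi=3$, $\sigma=1$; and $g$ cannot have harmonic Weyl tensor, for otherwise Theorem 1.4 (in the strict range) would force $M$ to be $\mathbb{CP}^2$, where $\int_M|W|^2=16\pi^2\chi$ contradicts the strict lower bound, or a sphere, which is excluded; while Theorem 1.5 (in the boundary range) would force a quotient of $\mathbb{S}^2\times\mathbb{S}^2$, whose Euler characteristic divides $4$ and so is never $3$. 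This is exactly conclusion (2).

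The \emph{main obstacle} is to exclude $b_2=0$, i.e. $b_2^+=b_2^-=0$. Here $\chi=2$, $\sigma=0$, $b_1=0$, so $M$ is a rational homology sphere, and I must show it is diffeomorphic to $\mathbb{S}^4$ or $\mathbb{RP}^4$, contradicting the hypothesis. The harmonic-Weyl subcase again follows from Theorems 1.4--1.5 (the only survivors, $\mathbb{S}^4$ and $\mathbb{RP}^4$, being excluded). The hard subcase is $\delta W\neq0$ with $32\pi^2<\int_M|W|^2\le\tfrac{128}{3}\pi^2$: since $H^2_{\pm}=0$, no Gursky-type lower bound is available, and the integral pinching lies strictly beyond the Chang--Gursky--Yang threshold $16\pi^2\chi$, so Theorem D does not apply. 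The plan is to pass to the conformal metric provided by the positive Yamabe constant together with $\int_M|W|^2<32\pi^2\chi$, on which $R>0$ and $|\mathring{Ric}|^2<\tfrac{R^2}{12}$ hold pointwise, and to show that the vanishing of $H^2$ upgrades this integral pinching to positive isotropic curvature in the conformal class; the Chen--Tang--Zhu classification, intersected with $b_1=0$ and $\chi>0$, then reduces to $\mathbb{S}^4$ and $\mathbb{RP}^4$, the desired contradiction. Carrying out this pointwise upgrade—showing that $b_2=0$ and the bound $\tfrac{64}{3}\pi^2\chi$ together yield positive isotropic curvature—is the delicate analytic step, and it is precisely the place where the exclusion of $\mathbb{S}^4$ and $\mathbb{RP}^4$ is used; I expect this to be the hardest part of the proof.
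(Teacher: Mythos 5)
There is a genuine gap, and you flagged it yourself: the subcase $b_2=0$ is left as a plan rather than a proof, and the equality branch outside ``both $b_2^{\pm}>0$'' falls into the same hole. The missing tool is Gursky's lower bound for the Yamabe invariant, the paper's inequality (2.12): $96\int_M\sigma_2(A)=\int_M R^2-12\int_M|\mathring{Ric}|^2\le\mu^2([g])$, strict unless $(M^4,g)$ is conformally Einstein. Combined with the Chern--Gauss--Bonnet formula this gives $\int_M|W|^2-\tfrac13\mu^2([g])\le\int_M|W|^2+4\int_M|\mathring{Ric}|^2-\tfrac13\int_M R^2=3\int_M|W|^2-64\pi^2\chi(M)\le0$, so the hypothesis $\int_M|W|^2\le\tfrac{64}{3}\pi^2\chi(M)$ \emph{converts} into the conformally invariant pinching $\int_M|W|^2\le\tfrac13\mu^2([g])$, strict unless conformally Einstein. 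This conversion is the linchpin of the paper's proof and it resolves precisely your ``hard subcase'': when $b_2=0$ one has $\sigma(M)=0$, hence $\int_M|W^{\pm}|^2=\tfrac12\int_M|W|^2<\tfrac16\mu^2([g])$ in the strict range; Theorem F and Freedman's theorem make $M$ covered by a homotopy sphere, and the pointwise upgrade you were hoping to invent is instead quoted from Section 3 of \cite{G3} and Section 2 of \cite{CZ} (the modified Yamabe problem for $R-\sqrt{6}|W^{\pm}|$, exactly as in the paper's proof of Theorem 5.1), yielding a conformal metric with $|W^{\pm}_{\tilde g}|^2<\tfrac16 R_{\tilde g}^2$ pointwise, hence $\lambda_1^{\pm}<\tfrac{R}{6}$ and positive isotropic curvature; the Chen--Tang--Zhu theorem then gives $\mathbb{S}^4$ or $\mathbb{RP}^4$, which is vacuous under your hypothesis. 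In the equality case $\int_M|W|^2=\tfrac{64}{3}\pi^2\chi(M)$, equality forces equality in (2.12), so $g$ is conformally Einstein and Corollary 1.9 delivers the quotient of $\mathbb{S}^2\times\mathbb{S}^2$ in one stroke; your double-Theorem-A equality argument only reaches conclusion 1) when $b_2^{+}>0$ and $b_2^{-}>0$ simultaneously. Your explicit remarks that ``Theorem D does not apply'' and ``no Gursky-type lower bound is available'' show exactly what was overlooked: (2.12) \emph{is} the available Gursky-type bound, and it needs no cohomological hypothesis.

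The parts you did complete are essentially sound, and one is genuinely different from the paper: you obtain $b_1=0$ by applying Theorem C in both orientations and summing, a clean topological shortcut, whereas the paper derives $b_1=0$ from Theorem F ii) only after the (2.12)-conversion produces the trace-free Ricci pinching (see the proofs of Theorems 1.4 and 1.5). Your counting $48\pi^2|\sigma(M)|\le\int_M|W|^2\le\tfrac{64}{3}\pi^2(2+b_2)$ forcing $b_2\le1$, and the exclusion of harmonic Weyl tensor in the $b_2=1$ case via Theorems 1.4 and 1.5, agree with the paper's route. But since the decisive $b_2=0$ case and part of the equality case rest on an unexecuted analytic step, the proposal is incomplete as a proof.
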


\begin{theorem}
Let $(M^4, g)$  be a   $4$-dimensional   compact   Riemannian manifold  with harmonic curvature and   positive  scalar curvature. If
\begin{equation}\int_M|W|^{2}+4\int_M|\mathring{Ric}|^2
=\frac{1}{3}\mu^2([g]),\end{equation}
then 1) $M^4$ is a quotient of $\mathbb{S}^2\times \mathbb{S}^{2}$ with the product metric;

2) $M^4$ is covered isometrically by $\mathbb{S}^1\times \mathbb{S}^{3}$ with the product metric;

3) $M^4$ is covered isometrically by $\mathbb{S}^1\times \mathbb{S}^{3}$ with a rotationally symmetric Derdzi\'{n}ski metric.
\end{theorem}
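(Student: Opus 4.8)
The plan is to exploit that harmonic curvature ($\delta\,Rm=0$) is far more rigid than it looks: tracing the second Bianchi identity shows that the scalar curvature $R$ is a positive constant, the Cotton tensor vanishes so that $\delta W^{+}=\delta W^{-}=0$, and $E:=\mathring{Ric}$ becomes a divergence-free Codazzi tensor ($\delta E=0$ and $\nabla_{i}E_{jk}=\nabla_{j}E_{ik}$). Since $R>0$ is constant, the conformal Laplacian $-6\Delta+R$ is a positive operator, hence $\mu([g])>0$ and Theorem 1.1 together with Gursky's results apply; moreover $\mu^{2}([g])\le Q_{g}(1)^{2}=\int_{M}R^{2}$ because $g$ has constant scalar curvature. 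I would rewrite the hypothesis as $\int_{M}|W^{+}|^{2}+\int_{M}|W^{-}|^{2}+4\int_{M}|E|^{2}=\tfrac13\mu^{2}([g])$ and then split into three strata according to which of $W^{\pm}$ vanish, since the mechanism forcing equality differs in each.

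First, if both $W^{\pm}\not\equiv0$, I would invoke the sharp inequality $\int_{M}|W^{\pm}|^{2}\ge\tfrac16\mu^{2}([g])$ (valid when $W^{\pm}\not\equiv0$), the inequality form of Theorem 1.1 and the $\mu$-refinement of Gursky's Theorem B, for each orientation; summing gives $\int_{M}|W|^{2}\ge\tfrac13\mu^{2}([g])$, so $F\ge\tfrac13\mu^{2}([g])+4\int_{M}|E|^{2}$. Equality then forces $E\equiv0$ (so $g$ is Einstein) together with equality in both copies of Theorem 1.1, whence $\nabla W^{\pm}=0$ with the two-eigenvalue spectrum. Einstein plus parallel Weyl plus constant $R$ yields $\nabla\,Rm=0$, so $g$ is locally symmetric; the only positive-scalar symmetric space in dimension four with both $W^{\pm}\neq0$ is a quotient of $\mathbb{S}^{2}\times\mathbb{S}^{2}$ (the Fubini--Study $\mathbb{CP}^{2}$ being excluded, since it has $W^{-}=0$). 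This is conclusion (1).

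Second, if $W\equiv0$ (conformally flat), then $F=4\int_{M}|E|^{2}$ and no Weyl coupling occurs. The plan is to integrate the Weitzenb\"ock formula for the divergence-free Codazzi tensor $E$, whose divergence term drops out on the closed manifold, and to combine the resulting identity with the sharp algebraic bound $|\mathrm{tr}(E^{3})|\le\tfrac{1}{\sqrt3}|E|^{3}$ (extremal exactly at the eigenvalue pattern $(-3a,a,a,a)$), a refined Kato inequality, and the sharp Sobolev--Yamabe inequality $6\int_{M}|\nabla u|^{2}+\int_{M}Ru^{2}\ge\mu([g])\,(\int_{M}u^{4})^{1/2}$ with $u=|E|$. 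At equality this forces $\nabla E=0$ together with the Ricci spectrum $(0,4a,4a,4a)$; a conformally flat harmonic-curvature four-manifold with $R>0$ and this parallel Ricci structure is isometrically covered by $\mathbb{S}^{1}\times\mathbb{S}^{3}$ with the product metric, which is conclusion (2).

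The remaining stratum, where exactly one of $W^{\pm}$ vanishes (say $W^{-}=0$, $W^{+}\not\equiv0$, $E\not\equiv0$), is the hard part. Here Theorem 1.1 still gives $\int_{M}|W^{+}|^{2}\ge\tfrac16\mu^{2}$, but pinning down equality requires running the Weitzenb\"ock formula for $W^{+}$ (with its cubic term controlled by $\det W^{+}\le\tfrac{1}{3\sqrt6}|W^{+}|^{3}$) simultaneously with the Codazzi--Weitzenb\"ock formula for $E$, whose Weyl--Ricci coupling term $\int_{M}W^{+}(E,E)$ must be absorbed jointly with the cubic terms via the sharp algebraic inequalities, and then feeding a single combination of $|W^{+}|$ and $|E|$ through the Sobolev--Yamabe inequality. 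The twin obstacles are: (a) engineering the constants so that exactly $\tfrac13\mu^{2}([g])$ emerges as the threshold and equality forces the degenerate spectra of $W^{+}$ and $E$ characteristic of Derdzi\'{n}ski's rotationally symmetric, non-Einstein, non-conformally-flat metrics; and (b) that because such metrics are \emph{not} locally symmetric, one cannot finish by the parallel-tensor argument of the first two cases but must instead invoke Derdzi\'{n}ski's classification \cite{Ca,D1} to identify them on $\mathbb{S}^{1}\times\mathbb{S}^{3}$, giving conclusion (3).
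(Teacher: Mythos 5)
Your stratum 1 (both $W^{\pm}\not\equiv 0$) is essentially the paper's argument: Theorem E forces $\int_M|W^{\pm}|^2\geq\frac16\mu^2([g])$ for each orientation, equality kills $\mathring{Ric}$ and activates Theorem 1.1 twice, and local symmetry identifies a quotient of $\mathbb{S}^2\times\mathbb{S}^2$ (the paper routes this through the proof of its Theorem 3.5). The other two strata, however, contain genuine errors. In the conformally flat stratum your plan to force $\nabla E=0$ with Ricci spectrum $(0,4a,4a,4a)$ at equality cannot work: the rotationally symmetric Derdzi\'nski metrics on $\mathbb{S}^1\times\mathbb{S}^3$ are conformally flat with harmonic curvature, satisfy the borderline equality $\int_M|\mathring{Ric}|^2=\frac{1}{12}\mu^2([g])$, and have \emph{non-parallel} Ricci tensor --- they are precisely conclusion (3), which your argument would wrongly rule out. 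The equality analysis here is softer than in Theorem 1.1: with the exponent natural to the Codazzi--Kato constant the H\"older step is an identity rather than a constraint, so $|E|$ is not forced to be constant; instead a power of $|E|$ becomes a Yamabe minimizer and one lands in the dichotomy of Theorem 1.15 of \cite{FX} (product metric \emph{or} Derdzi\'nski metric), which is exactly the black box the paper invokes.

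Consequently your stratum 3 (exactly one of $W^{\pm}$ zero, $E\not\equiv 0$) is mislabeled --- the Derdzi\'nski metrics have $W\equiv 0$, so they cannot be the output of that case --- and it is also the one place where you offer no actual proof, only ``twin obstacles.'' The paper closes this case not by a joint Weitzenb\"ock scheme but by classification input: if $W^{-}=0$, Bourguignon's theorem \cite{Bo2} (harmonic curvature plus half conformal flatness) gives conformally flat or Einstein; the Einstein branch goes through Hitchin's theorem to $\mathbb{S}^4$ or $\mathbb{CP}^2$, and $\mathbb{CP}^2$ is excluded by the computation $\int_{\mathbb{CP}^2}|W|^2=48\pi^2=\frac16\mu^2([g_{FS}])\neq\frac13\mu^2([g_{FS}])$; if instead $W^{+}=0$, the paper uses Theorem 4.3 of Micallef--Wang \cite{MW} together with the fact that anti-self-dual K\"ahler metrics are scalar-flat (Corollary 1 of \cite{D}) to force $W\equiv 0$. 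Without some such input your stratum 3 has no closing mechanism, and there are no Derdzi\'nski-type metrics there to be ``identified.'' (You also leave the sub-case $W^{-}=0$, $W^{+}\neq 0$, $E\equiv 0$ unassigned; it is empty by the same Hitchin-plus-$\mathbb{CP}^2$ computation.)
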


\begin{theorem}
Let $(M^4, g)$  be a   $4$-dimensional   compact  Riemannian manifold  with harmonic curvature and   positive  scalar curvature. If
\begin{equation}\int_M|W|^{2}+4\int_M|\mathring{Ric}|^2
<\frac{1}{3}\mu^2([g]),\end{equation}
then 1) $M^4$ is a quotient of the round $\mathbb{S}^4$;

2) $M^4$ is a $\mathbb{CP}^2$ with the Fubini-Study metric.
\end{theorem}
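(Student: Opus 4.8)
The plan is to reduce the statement to the harmonic–Weyl situation already recorded in Theorem 1.4, and then to exploit the full strength of the harmonic–curvature hypothesis (which is strictly more than harmonic Weyl) to pass from the conformal conclusion of Theorem 1.4 to a genuine metric conclusion about $(M^4,g)$ itself.

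First I would record the two structural consequences of harmonic curvature. Since $\delta Rm=0$ is equivalent to the Ricci tensor being Codazzi, tracing the second Bianchi identity forces the scalar curvature $R$ to be constant; as $R>0$ by hypothesis, $(M^4,g)$ has harmonic Weyl tensor ($\delta W=0$) and positive Yamabe constant. Because $R$ is a positive constant, the constant function $f\equiv1$ is admissible in the Yamabe quotient, so
\begin{equation*}
\mu([g])\le Q_g(1)=R\,\mathrm{Vol}(M)^{1/2},\qquad \mu^2([g])\le R^2\,\mathrm{Vol}(M).
\end{equation*}

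The heart of the argument is then a short computation with the four–dimensional Gauss–Bonnet identity
\begin{equation*}
8\pi^2\chi(M)=\int_M\Big(\tfrac14|W|^2-\tfrac12|\mathring{Ric}|^2+\tfrac1{24}R^2\Big)\,dv_g .
\end{equation*}
Since $R$ is constant, solving for $R^2\mathrm{Vol}(M)$ gives $\tfrac13R^2\mathrm{Vol}(M)=64\pi^2\chi(M)-2\int_M|W|^2+4\int_M|\mathring{Ric}|^2$. Feeding this and the bound $\mu^2([g])\le R^2\mathrm{Vol}(M)$ into the hypothesis $\int_M|W|^2+4\int_M|\mathring{Ric}|^2<\tfrac13\mu^2([g])$, the two $\int_M|\mathring{Ric}|^2$ terms cancel identically — this is exactly what the coefficient $4$ in the hypothesis is engineered to do — leaving the clean strict inequality
\begin{equation*}
\int_M|W|^2<\frac{64}{3}\pi^2\chi(M).
\end{equation*}
As $(M^4,g)$ has harmonic Weyl tensor and positive Yamabe constant, Theorem 1.4 applies and yields precisely two alternatives: either the Yamabe minimizer $\tilde g\in[g]$ is the round $\mathbb{S}^4$ or $\mathbb{RP}^4$, or $(M^4,g)$ is $\mathbb{CP}^2$ with the Fubini–Study metric. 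The second alternative is conclusion 2, so nothing more is needed there.

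The remaining, and in my view most delicate, step is the upgrade in the first alternative. There $g$ is conformal to a round metric, hence locally conformally flat ($W_g\equiv0$), while it still carries harmonic curvature and constant positive scalar curvature. I would invoke the structure theory of compact locally conformally flat manifolds with harmonic curvature: such a metric has parallel curvature (is locally symmetric), and its universal cover is a space form or a Riemannian product of the type $\mathbb{S}^3(c)\times\mathbb{R}$. Since Theorem 1.4 has already pinned the diffeomorphism type of $M$ down to $\mathbb{S}^4$ or $\mathbb{RP}^4$, the product possibilities (which carry infinite fundamental group or the wrong topology) are excluded, forcing constant positive sectional curvature; thus $(M^4,g)$ is a quotient of the round $\mathbb{S}^4$, which is conclusion 1. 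The main obstacle is exactly this conformal–rigidity step: one must verify that the harmonic–curvature hypothesis — rather than merely $\delta W=0$ — is strong enough to rule out non-round conformally flat representatives, and that only the spherical space forms are compatible with the topology produced by Theorem 1.4.
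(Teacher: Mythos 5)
Your reduction to Theorem 1.4 is correct and is genuinely different from the paper's own proof. Harmonic curvature does force $R$ to be constant, the test function $f\equiv 1$ does give $\mu^2([g])\le R^2\,\mathrm{Vol}(M)=\int_M R^2$, and the Chern--Gauss--Bonnet identity in the form $\tfrac13\int_M R^2=64\pi^2\chi(M)-2\int_M|W|^2+4\int_M|\mathring{Ric}|^2$ makes the two terms $4\int_M|\mathring{Ric}|^2$ cancel, yielding the strict inequality $\int_M|W|^2<\tfrac{64}{3}\pi^2\chi(M)$; Theorem 1.4 then applies, and its proof in Section 3 uses only Theorems E, F, 1.2, 3.3 and \cite{FX}, so there is no circularity. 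The paper proceeds quite differently: it quotes Theorem 1.4 of \cite{F} to conclude at once that $(M^4,g)$ is Einstein, notes that the hypothesis forces $\int_M|W^{+}|^2<\tfrac16\mu^2([g])$ or $\int_M|W^{-}|^2<\tfrac16\mu^2([g])$, kills the corresponding half of the Weyl tensor by Theorem E, and finishes with Hitchin's classification of half-conformally-flat Einstein four-manifolds. Your route trades the external Einstein rigidity result and Hitchin's theorem for the paper's internal Theorem 1.4, at the cost of the conformal-to-isometric upgrade in the spherical alternative.

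That upgrade, however, is justified in your proposal by a false statement. A compact locally conformally flat manifold with harmonic curvature need \emph{not} be locally symmetric: when $W=0$ the Schouten tensor is automatically Codazzi, so harmonic curvature is there equivalent to constant scalar curvature, and the rotationally symmetric Derdzi\'{n}ski metrics on $\mathbb{S}^1\times\mathbb{S}^3$ --- which this very paper lists in Theorems 1.5, 1.7 and 3.3 as distinct from the product metric --- are conformally flat with harmonic curvature but non-parallel Ricci tensor; Yamabe metrics on Kleinian quotients give a further zoo of examples. Hence no structure theorem of the kind you invoke ("parallel curvature, universal cover a space form or $\mathbb{S}^3(c)\times\mathbb{R}$") is available, and a purely topological exclusion cannot by itself rule out a hypothetical non-round conformally flat constant-scalar-curvature metric on $\mathbb{S}^4$. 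The step is nonetheless repairable in two standard ways: (i) in alternative 1 of Theorem 1.4, $g$ is conformal to the round metric and has constant scalar curvature, so Kuiper--Obata rigidity (constant scalar curvature metrics in the conformal class of the round sphere, or of its quotient, are themselves of constant curvature) shows $(M^4,g)$ is a quotient of the round $\mathbb{S}^4$; or (ii) once $W\equiv 0$, your hypothesis reduces to $\int_M|\mathring{Ric}|^2<\tfrac1{12}\mu^2([g])$, and Theorem 1.13 of \cite{FX} --- exactly as invoked in the paper's Theorem 4.2 --- yields the same conclusion directly. With either repair your argument is complete.
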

\begin{corollary}
Let $(M^4, g)$  be a $4$-dimensional   complete Einstein manifold  with   positive scalar curvature. If
\begin{equation*}\int_M|W|^{2}\leq\frac{1}{3}\mu^2([g]),\end{equation*}
then 1) $(M^4, g)$ is isometric to either  a stand sphere $\mathbb{S}^4$ or  a real projective space $\mathbb{RP}^4$;

2) $(M^4, g)$ is isometric to a  $\mathbb{CP}^2$ with the Fubini-Study metric;

3) $(M^4, g)$ is isometric to a quotient of
$\mathbb{S}^2\times \mathbb{S}^{2}$ with the product metric.
\end{corollary}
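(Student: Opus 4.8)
The plan is to deduce the Corollary from the two refined integral-pinching results for metrics with harmonic curvature, Theorems 1.7 and 1.8, after using the Einstein condition both to verify their hypotheses and to cut down their lists of conclusions.

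First I would reduce to the compact setting and simplify the pinching functional. An Einstein metric satisfies $Ric=\frac{R}{4}g$ with $R$ a positive constant, so the Ricci curvature has a uniform positive lower bound and Myers' theorem makes $(M^4,g)$ compact with finite fundamental group; this is what permits the compact theorems of the previous sections to be applied. Moreover the Einstein condition gives $\mathring{Ric}=0$ and $\nabla Ric=0$, so $g$ has harmonic curvature and
$$\int_M|W|^{2}+4\int_M|\mathring{Ric}|^{2}=\int_M|W|^{2}.$$
Hence the hypothesis $\int_M|W|^{2}\le\frac13\mu^2([g])$ is exactly the inequality, together with its equality case, that appears in Theorems 1.7 and 1.8.

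The argument then splits according to whether the inequality is strict. If $\int_M|W|^{2}<\frac13\mu^2([g])$, Theorem 1.8 gives that $M^4$ is a quotient of the round $\mathbb{S}^4$ or is $\mathbb{CP}^2$ with the Fubini--Study metric; since the only free quotients of an even-dimensional round sphere are $\mathbb{S}^4$ and $\mathbb{RP}^4$, this produces conclusions 1) and 2). If equality holds, Theorem 1.7 offers three candidates, and here the Einstein hypothesis is decisive: both $\mathbb{S}^1\times\mathbb{S}^3$ models --- the product metric, whose Ricci tensor vanishes along the $\mathbb{S}^1$ factor, and the rotationally symmetric Derdzi\'{n}ski metric, which is by construction a non-Einstein critical metric --- fail to be Einstein and are thus excluded, leaving only the quotient of $\mathbb{S}^2\times\mathbb{S}^2$ with its (Einstein) product metric, which is conclusion 3).

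The main obstacle is precisely this equality case, where one must eliminate the non-round alternatives of Theorem 1.7. The decisive point is to confirm that the two $\mathbb{S}^1\times\mathbb{S}^3$ metrics are genuinely not Einstein, so that the Einstein condition discards them without leftover. Once that verification is in hand, the three conclusions of the Corollary are exactly the Einstein members of the combined conclusion lists of Theorems 1.7 and 1.8.
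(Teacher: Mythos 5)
Your proposal is correct and follows exactly the route the paper intends (it states Corollary 1.9 immediately after Theorems 1.7 and 1.8 without separate proof): Einstein implies $\nabla Ric=0$, hence harmonic curvature and $\mathring{Ric}=0$, Myers gives compactness, the strict case is Theorem 1.8 (with quotients of the round $\mathbb{S}^4$ being only $\mathbb{S}^4$ or $\mathbb{RP}^4$), and in the equality case Theorem 1.7's two $\mathbb{S}^1\times\mathbb{S}^3$ alternatives are discarded because neither the product metric nor the rotationally symmetric Derdzi\'{n}ski metric is Einstein. No gaps.
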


\begin{remark}
For Riemannian manifolds  with harmonic curvature and  dimensions $n\geq4$, the author  proved some similar results in \cite{F}.
\end{remark}

\begin{theorem}
Let $(M^4, g)$  be a $4$-dimensional   compact Riemannian manifold  with harmonic curvature and   positive scalar curvature. If
 \begin{equation*}\int_M|W|^{2}\leq\frac{1}{3}\mu^2([g]),\end{equation*}
then
1) $(M^4, {g})$  is conformally flat with positive constant scalar curvature;

2) $(M^4, {g})$  is a $\mathbb{CP}^2$ with the Fubini-Study metric;

3) $(M^4, g)$ is isometric to a quotient of
$\mathbb{S}^2\times \mathbb{S}^{2}$ with the product metric.
\end{theorem}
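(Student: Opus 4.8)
The plan is to exploit the half-harmonic structure coming from harmonic curvature and to reduce everything to the self-dual/anti-self-dual refinement of Theorem B together with the already established Theorems 1.7 and 1.8. First I would record the standard consequences of $\delta Rm=0$ in dimension four: the scalar curvature $R$ is a positive constant, the Ricci tensor is a Codazzi tensor, and $\delta W=0$; since the Hodge decomposition $\wedge^2=\wedge^2_+\oplus\wedge^2_-$ is parallel, the vanishing of $\delta W$ splits as $\delta W^+=0$ and $\delta W^-=0$. Thus both chiral pieces $W^{\pm}$ satisfy the hypotheses of the refined Theorem B and of Theorem 1.1. I would also note that $\int_M|W|^2$ and $\mu^2([g])$ are conformal invariants, so the pinching $\int_M|W|^2\le\frac13\mu^2([g])$ is a conformally invariant hypothesis.

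Next I would apply the refinement of Theorem B to each of $W^+$ and $W^-$ separately: for each sign either $W^{\pm}\equiv 0$, or $\int_M|W^{\pm}|^2\ge\frac16\mu^2([g])$, with equality forcing (by Theorem 1.1) that $g$ is Kähler of positive constant scalar curvature for the corresponding orientation. Writing $\int_M|W|^2=\int_M|W^+|^2+\int_M|W^-|^2$, I would then run the case analysis. If $W^+\equiv W^-\equiv 0$ then $W\equiv 0$ and $g$ is conformally flat with positive constant scalar curvature, giving conclusion 1). If both $W^+$ and $W^-$ are nonzero, the two lower bounds add to $\int_M|W|^2\ge\frac13\mu^2([g])$; combined with the hypothesis this forces $\int_M|W^{\pm}|^2=\frac16\mu^2([g])$, so by Theorem 1.1 the metric is Kähler for both orientations with constant positive scalar curvature. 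Since reversing the orientation interchanges $W^+$ and $W^-$, this bi-Kähler Einstein condition characterizes $\mathbb{S}^2\times\mathbb{S}^2$ with its product metric up to quotient, which is conclusion 3).

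The remaining, and genuinely hard, case is when exactly one chiral part vanishes, say $W^-\equiv 0$ (self-dual) and $W^+\not\equiv 0$. Here the refinement only yields the one-sided bound $\frac16\mu^2([g])\le\int_M|W^+|^2=\int_M|W|^2\le\frac13\mu^2([g])$, which does not by itself reach the extremal value $\frac16\mu^2([g])$ attained by $\mathbb{CP}^2$; this is where the \emph{main obstacle} lies. The plan to overcome it is to upgrade $g$ to an Einstein metric. Using that $g$ is self-dual with harmonic curvature (hence $\mathring{Ric}$ is Codazzi and $R$ is a positive constant), I would run the Bochner–Weitzenböck formula for $\mathring{Ric}$ in dimension four, in which the curvature terms are $W^+(\mathring{Ric},\mathring{Ric})$, a cubic term in $\mathring{Ric}$, and $R|\mathring{Ric}|^2$; self-duality lets one bound the Weyl term by $|W^+||\mathring{Ric}|^2$ and then absorb it, via the Yamabe–Sobolev inequality together with the pinching on $\int_M|W^+|^2$, to conclude first $\nabla\mathring{Ric}=0$ and finally $\mathring{Ric}=0$. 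Once $g$ is Einstein and self-dual with positive scalar curvature, Hitchin's classification gives $\mathbb{S}^4$ or $\mathbb{CP}^2$; the former is conformally flat (hence has $W^+=0$), so the assumption $W^+\not\equiv 0$ leaves only $\mathbb{CP}^2$ with its Fubini–Study metric, which is conclusion 2).

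Finally I would assemble the conclusion. Once the dichotomy "conformally flat or Einstein" is in hand, the Einstein regime can alternatively be fed into Theorems 1.7 and 1.8 (equivalently the Corollary): under $\mathring{Ric}=0$ the pinching reads $\int_M|W|^2\le\frac13\mu^2([g])$, and the strict case returns a quotient of the round $\mathbb{S}^4$ (conformally flat, conclusion 1) or $\mathbb{CP}^2$ (conclusion 2), while the equality case returns a quotient of $\mathbb{S}^2\times\mathbb{S}^2$ (conclusion 3); this cross-checks the three cases obtained above. The decisive step, and the one I expect to demand the most care, is the promotion to an Einstein metric in the purely self-dual case, since there the integral pinching controls only $\int_M|W^+|^2$ and not $\int_M|\mathring{Ric}|^2$, so the Weitzenböck estimate must be closed using conformal invariance and the sharp constant in the Yamabe–Sobolev inequality rather than a direct two-sided bound.
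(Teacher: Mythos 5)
Your global architecture (the dichotomy from Theorem E applied separately to $W^{+}$ and $W^{-}$, the three-way case split, Hitchin's classification at the end, the cross-check against Theorems 1.7--1.8) matches the skeleton of the paper's proof, which runs through Theorems E, 1.1, 3.5 and 4.6. But your treatment of the decisive one-sided case --- $W^{-}\equiv 0$, $W^{+}\not\equiv 0$ --- has a genuine gap. You propose to prove $\mathring{Ric}=0$ by a Weitzenb\"ock estimate for the Codazzi tensor $\mathring{Ric}$, absorbing the curvature terms via the Yamabe--Sobolev inequality and the pinching on $\int_M|W^{+}|^{2}$. This cannot close: the Weitzenb\"ock formula for $\mathring{Ric}$ contains the signless cubic term $\operatorname{tr}(\mathring{Ric}^{3})$, whose integral can only be absorbed, after H\"older, when $\bigl(\int_M|\mathring{Ric}|^{2}\bigr)^{1/2}$ is small relative to $\mu([g])$ --- and the hypothesis of the theorem bounds only $\int_M|W|^{2}$, leaving $\int_M|\mathring{Ric}|^{2}$ completely uncontrolled. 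The paper's own Theorem 1.8 is exactly this Bochner argument, and it requires the strict hypothesis $\int_M|W|^{2}+4\int_M|\mathring{Ric}|^{2}<\frac{1}{3}\mu^{2}([g])$; in your situation $\int_M|W^{+}|^{2}$ alone may already equal $\frac{1}{3}\mu^{2}([g])$, so there is no margin left even before the Ricci terms appear. The paper overcomes this by structure theory rather than analysis: harmonic curvature implies the metric is analytic (DeTurck--Goldschmidt \cite{DG}), and Derdzi\'nski's Proposition 7 in \cite{D} then gives that a self-dual analytic four-manifold with harmonic curvature is Einstein, after which Hitchin finishes as you say. For the opposite chirality ($W^{+}\equiv0$, $W^{-}\not\equiv0$) the paper does not argue by reversing orientation; it applies Theorem 4.3 of Micallef--Wang \cite{MW} (using $\frac{R}{6}I-W^{+}=\frac{R}{6}I>0$) to force a K\"ahler structure and derives a contradiction from Corollary 1 in \cite{D}, since anti-self-dual K\"ahler metrics are scalar-flat.

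A smaller overreach occurs in your ``both equalities'' case: you pass from ``K\"ahler for both orientations with constant positive scalar curvature'' to a ``bi-K\"ahler Einstein condition,'' but Theorem 1.1 does not yield Einstein, and a bi-K\"ahler metric of constant scalar curvature is locally a product of two surfaces whose curvatures need not be equal. The paper instead invokes Proposition 1 in \cite{D} (a K\"ahler metric with harmonic Weyl tensor has parallel Ricci tensor) to get $\nabla Rm=0$, and then pins down the eigenvalue structure of $Rm$ from the equality case of (2.4) before identifying the quotient of $\mathbb{S}^{2}\times\mathbb{S}^{2}$ with the product metric; some argument of this kind is needed in your write-up before conclusion 3) may be claimed.
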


{Acknowledgement:} The author would like to thank  Professor M. J. Gursky for
some helpful suggestions. The author  is very grateful for Professors Haizhong Li, Kefeng liu and Hongwei Xu's encouragement and help.


\section{Four manifolds with half harmonic Weyl tensor}
In order to prove some results in this article, we need the following Weyl Estimate proved by Gursky in \cite{G3}.

\noindent
{{\bf Theorem E (Gursky \cite{G3}).}
Let $(M^4, g)$ be a  $4$-dimensional   compact Riemannian manifold with
$\delta W^{\pm}=0$  and positive Yamabe constant $\mu([g])$. If $$\int_{M}|W_g^{\pm}|^{2}< \frac 16\mu^2([g]) ,$$
 then $(M^4, g)$ is anti-self-dual (resp., self-dual).
}
\begin{remark}
First, Gursky obtained   an improved Kato inequality $|\nabla W^{+}|^2\geq \frac{5}{3}|\nabla |W^{+}||^2$ in \cite{G3}. Thus using the Bochner technique,  Gursky proved Theorems B, E, F and G by introducing the corresponding functional and conformal invariant
with the modified scalar curvature $R-\sqrt{6}|W^{\pm}$ in \cite{G3}. Based on Gursky's improved Kato inequality,  we can reprove Theorems B, E, F and G only by using the modified Bochner technique (See \cite{{BC},{F},{FL}}). In order to prove Theorem 1.1, we also need the following proof of Theorem E.
\end{remark}
\begin{proof}First, we recall the following
Weitzenb\"{o}ck formula (see \cite{B} and \cite{B2})
\begin{eqnarray}
\triangle|W^{\pm}|^2=2|\nabla W^{\pm}|^2+R|W^{\pm}|^2-144det_{\wedge^2_\pm}W^{\pm}.
\end{eqnarray}
From (2.1), by the Kato inequality $|\nabla W^{+}|^2\geq \frac{5}{3}|\nabla |W^{+}||^2$ (\cite{G3}), we obtain
\begin{equation} |W^{\pm}|\triangle|W^{\pm}|\geq\frac{2}{3}|\nabla |W^{\pm}||^2+\frac{1}{2}R|W^{\pm}|^2-72det_{\wedge^2_\pm}W^{\pm}.\end{equation}
By a simple Lagrange multiplier argument it is easily verified that
\begin{equation}-144det_{\wedge^2_\pm}W^{\pm}\geq-\sqrt{6}|W^{\pm}|^3\end{equation}
and equality is attained at a point where $W^{\pm}\neq0 $ if and only if $W^{\pm}$ has precisely
two eigenvalues. By (2.2) and (2.3), we get
\begin{equation} |W^{\pm}|\triangle|W^{\pm}|\geq\frac{2}{3}|\nabla |W^{\pm}||^2+\frac{1}{2}R|W^{\pm}|^2-\frac{\sqrt{6}}{2}|W^{\pm}|^3.\end{equation}
Let $u=|W^{\pm}|$.
By (2.4), we compute
\begin{equation}
\begin{split}
u^{\alpha}\triangle u^{\alpha}&=u^{\alpha}\left(\alpha(\alpha-1)u^{\alpha-2}|\nabla u|^2+\alpha u^{\alpha-1}\triangle u\right)\\
&=\frac{\alpha-1}{\alpha}|\nabla u^{\alpha}|^2+\alpha
u^{2\alpha-2}u\triangle u\\
&\geq(1-\frac{1}{3\alpha})|\nabla u^{\alpha}|^2
-\frac{\sqrt{6}}{2}\alpha u^{2\alpha+1}+\frac{R\alpha }{2} u^{2\alpha},
\end{split}
\end{equation}
where $\alpha$ is a positive constant.
Integrating (2.5) by parts, we get
\begin{equation}
\begin{split}
(2-\frac{1}{3\alpha})\int_{M}|\nabla u^\alpha|^2-\frac{\sqrt{6}}{2}\alpha\int_{M}u^{2\alpha+1}
+\frac{\alpha }{2}\int_{M}Ru^{2\alpha}\leq 0.
\end{split}
\end{equation}
By the H\"{o}lder inequality and (2.6), we have
\begin{equation}
\begin{split}
(2-\frac{1}{3\alpha})\int_{M}|\nabla u^\alpha|^2-\frac{\sqrt{6}}{2}\alpha\left(\int_{M}u^{4\alpha}\right)^{\frac{1}{2}}\left(\int_{M}u^{2}\right)^{\frac{1}{2}}+\frac{\alpha }{2}\int_{M}Ru^{2\alpha}\leq 0.
\end{split}
\end{equation}
For $2-\frac{1}{3\alpha}>0$, by the definition of Yamabe constant and (2.7), we get
\begin{equation}
\begin{split}
0\geq(2-\frac{1}{3\alpha})\frac16\mu([g])\left(\int_M  u^{4\alpha}\right)^{\frac{1}{2}}-\frac{\sqrt{6}}{2}\alpha\left(\int_{M}u^{4\alpha}\right)^{\frac{1}{2}}\left(\int_{M}u^{2}\right)^{\frac{1}{2}}\\
+\frac {9\alpha^2-6\alpha+1}{18\alpha}\int_M Ru^{2\alpha}.
\end{split}
\end{equation}
We choose $\alpha=\frac 13$, from (2.8) we get
\begin{equation}
\begin{split}
0\geq\left[\frac{1}{\sqrt{6}}\mu([g])-\left(\int_{M}u^{2}\right)^{\frac{1}{2}}\right]\left(\int_M  u^{\frac43}\right)^{\frac{1}{2}}.
\end{split}
\end{equation}
We choose $\left(\int_{M}|W^{\pm}|^{2}\right)< \frac{1}{6}\mu^2([g])$ such that the above inequality imply $\int_M  u^{\frac43}=0$, that is, $W^{\pm}=0$, i.e., $(M^4, g)$ is anti-self-dual, or self-dual.
\end{proof}
\begin{remark}
For $0\leq k\leq\frac n2$, by the Kato inequality for harmonic $k$-form $\omega$ (see \cite{Bo})
$\frac{n+1-k}{n-k}|\nabla|\omega||\leq|\nabla \omega|$ and the two Weitzenb\"{o}ck formulas in \cite{G2},
one has
$$\frac 12\triangle|\omega|^2\geq|\nabla\omega|^2-\frac{\sqrt{6}}{3}|W^{\pm}||\omega|^2+\frac 13 R|\omega|^2\geq\frac32|\nabla|\omega||^2-\frac{\sqrt{6}}{3}|W^{\pm}||\omega|^2+\frac 13 R|\omega|^2,\\ \forall \omega\in H^2_{\pm}(M^4) $$
and
$$\frac 12\triangle|\omega|^2\geq\frac43|\nabla|\omega||^2-\frac{\sqrt{3}}{2}|\mathring{Ric}||\omega|^2+\frac 14 R|\omega|^2,\\ \forall \omega \in H^1(M^4). $$
Based on the above two Weitzenb\"{o}ck formulas, using the same argument as in the proof of Theorem E, we can obtain two results of Gursky \cite{{G2},{G3}} as follows:

\noindent
{{\bf Theorem F(Gursky \cite{{G2},{G3}}).} Let $(M^4, g)$ be a  $4$-dimensional   compact Riemannian manifold with
 positive Yamabe constant $\mu([g])$.

 i) If $$\int_{M}|W^{\pm}|^{2}< \frac 16\mu^2([g]) ,$$
 then $H^2_{\pm}(M^4)=0$ and $b^{\pm}_2(M)=0$;

 ii) If $$\int_{M}|\mathring{Ric}|^{2}< \frac {1}{12}\mu^2([g]) ,$$
 then $H^1(M^4)=0$ and $b_1(M)=0$.}
 \end{remark}

\begin{proof}[{\bf Proof of Theorem
1.1}]
(1.2) implies that the equality holds in (2.9).
When the equality holds in (2.9), all inequalities leading to (2.7)
become equalities. From (2.8), the function $u^\alpha$ attains the infimum in the Yamabe functional. From (2.7), the equality for the H\"{o}lder inequality implies that $u$ is constant, i.e., $|W^{\pm}|$ is constant. Hence at every point,  it has an eigenvalue of
multiplicity $2$ and another of multiplicity $1$, i.e., $W^{\pm}$ has eigenvalues $\{-\frac{R}{12},-\frac{R}{12},\frac{R}{6}\}$, and $R$ is constant. From (2.1),   we get  $\nabla W^{\pm}=0$. By Proposition 5 in \cite{D}, $(M^4, g)$ is a K\"{a}hler manifold of positive constant scalar curvature.
\end{proof}

By Remark 2.3, we can rewrite Theorem B as follows:

\noindent
{ {\bf Theorem B*.}
Let $(M^4, g)$ be a  $4$-dimensional   compact Riemannian manifold with
$\delta W^{+}=0$ and positive Yamabe constant $\mu([g])$. Then either $(M^4, g)$ is anti-self-dual, or
\begin{equation}\int_{M}|W_g^{\pm}|^{2}\geq16\int_M \sigma_2(A),\end{equation}
 Furthermore, equality holds in (2.10) if and only if $g$ is a positive Einstein which is either K\"{a}hler, or the quotient of a K\"{a}hler manifold by a free, isometric,
anti-holomorphic involution.
}
\begin{remark}
$\int_{M}|W_g^{\pm}|^{2}\geq\frac 16\mu^2([g])$ implies that \begin{equation}\int_{M}|W_g^{\pm}|^{2}\geq\frac {16}{3}\pi^2(2\chi(M^4)\pm3\sigma(M^4)).\end{equation}
In fact, we recall the
following lower bound for the Yamabe invariant on  compact four-manifolds which was proved by M. J. Gursky (see \cite{{G}}):
\begin{equation}96\int_M \sigma_2(A)=\int_M R^2-12\int_M|\mathring{Ric}|^2\leq{\mu^2([g])},\end{equation}
where $\sigma_2(A)$ denotes the second-elementary function of the eigenvalues of the Schouten
tensor $A$, the inequality is strict unless $(M^4, g)$ is conformally Einstein. By the Chern-Gauss-Bonnet formula ( see Equation 6.31 of \cite{B})
$$\int_M|W|^{2}-2\int_M|\mathring{Ric}|^2+\frac{1}{6}\int_M R^2
=32{\pi}^2\chi(M)$$
we obtain
 \begin{equation}\int_{M}|W_g^{\pm}|^{2}\geq-2\int_M|\mathring{Ric}|^2+\frac{1}{6}\int_M R^2
=32{\pi}^2\chi(M)-\int_M|W|^{2}\end{equation}
Combining (1.1) with the above, we can prove (2.11).

(2.10) implies that \begin{equation}\int_{M}|W_g^{\pm}|^{2}=\frac {16}{3}\pi^2(2\chi(M^4)\pm3\sigma(M^4)).\end{equation}
In fact, by Theorem D, we have $\int_{M}|W_g^{\pm}|^{2}=\frac 16\mu^2([g])= 16\int_M \sigma_2(A)$. Hence from (2.13), (2.14) holds.

For four-manifolds $M^4$ with harmonic Weyl tensor and  positive Yamabe constant $\mu([g])$ which is not locally conformally flat , the
 lower bound for $\mu([g])$ is given by i) if $M$ is anti-self-dual, $\mu^2([g])\leq 6 \int_M|W^{-}|^2$;  ii) if $M$ is self-dual, $\mu^2([g])\leq 6 \int_M|W^{+}|^2$;  iii) if $M$ is neither anti-self-dual nor self-dual, $\mu^2([g])\leq 6 \min\{\int_M|W^{-}|^2, \int_M|W^{+}|^2\}$. The Yamabe constant $\mu^2([g])$ of  a compact positive K\"{a}hler-Einstein manifold $(M^4, g)$   is equal to $32\pi^2(2\chi(M^4)+3\sigma(M^4))$.
\end{remark}
\begin{proof}By Theorem E, we get
$$\int_{M}|W_g^{+}|^{2}\geq\frac 16\mu^2([g]).$$
 Hence  we get from (2.10)
\begin{equation*}\int_{M}|W_g^{+}|^{2}=\frac 16{\mu^2([g])}= 16\int_M \sigma_2(A).\end{equation*}
So  $g$ is conformal to an Einstein metric $\tilde{g}$. By Theorem 1.1, we get that $(M^4, g)$ is a K\"{a}hler manifold of positive constant scalar curvature.

Assume that $\tilde{g}=\lambda^2 g$. We now claim that $\lambda$ is constant, i.e., $g$ is Einstein metric. To see this, first notice
that $\tilde{g}$ being Einstein metric implies that $\delta W_{\tilde{g}}^{+}=0$. We recall this transformation law about $W^{+}$, i.e.,
\begin{equation}
{\delta}_{\tilde{g}}W_{\tilde{g}}^{+}={\delta}_{g} W_{g}^{+}-W_{g}^{+}(\frac{\nabla \lambda}{\lambda},\ldots,).
\end{equation}
It is easy to see from (2.15) that
 \begin{equation}
W_{g}^{+}(\frac{\nabla \lambda}{\lambda},\ldots,)=0.
\end{equation}
Now any oriented four-manifold $W^{+}$ satisfies (see \cite{D})
\begin{equation}
(W^{+})^{ikpq}(W^{+})_{jkpq}=|W^{+}|^2\delta_j^i.
\end{equation}
Pairing both sides of (2.17) with $(d\lambda\otimes d\lambda)_i^j$
 and using (2.16) we
get $|W^{+}|^2|\nabla\lambda|^2=0$. Since $|W_{\tilde{g}}^{+}|$ is constant, $W^{+}$ never vanishes, so $\nabla\lambda=0$
and $\lambda$ is constant.

Hence we conclude that $(M^4, g)$ is an Einstein manifold
which is either K\"{a}hler, or the quotient of a K\"{a}hler manifold by a free, isometric,
anti-holomorphic involution.
\end{proof}

\begin{theorem}
 Let $(M^4, g)$ be a  $4$-dimensional   compact analytic Riemannian manifold with
$\delta W^{\pm}=0$. Then
$$\int_M|W^{\pm}|^{\frac{1}{3}}[R-\sqrt{6}|W^{\pm}|]\leq 0$$
and equality occurs if and only if either $(M^4, g)$ is anti-self-dual (resp., self-dual), or  $(M^4, g)$ is conformal to a K\"{a}hler manifold.
\end{theorem}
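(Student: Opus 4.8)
The plan is to re-run the computation from the proof of Theorem E at the one exponent that kills the gradient term. Write $u=|W^{\pm}|$ and recall inequality (2.5), valid on $\{u>0\}$ for every $\alpha>0$. First I would take $\alpha=\tfrac16$: then $2\alpha=\tfrac13$ and $2\alpha+1=\tfrac43$ are exactly the exponents appearing in the statement, while the gradient coefficient is $1-\tfrac{1}{3\alpha}=-1$. Integrating (2.5) over the closed manifold and applying Green's identity $\int_M u^{1/6}\triangle u^{1/6}=-\int_M|\nabla u^{1/6}|^2$, the two copies of $\int_M|\nabla u^{1/6}|^2$ cancel exactly, leaving
$$0\geq \frac{1}{12}\int_M R\,u^{1/3}-\frac{\sqrt6}{12}\int_M u^{4/3}=\frac{1}{12}\int_M|W^{\pm}|^{1/3}\bigl(R-\sqrt6\,|W^{\pm}|\bigr),$$
which is the claimed inequality. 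The analyticity hypothesis is what makes this rigorous: since $\delta W^{\pm}=0$ and $g$ is real-analytic, $W^{\pm}$ is real-analytic, so either $W^{\pm}\equiv0$ or its zero set is a proper analytic subvariety of measure zero. In the latter case I would carry out the integration on $\{u>\varepsilon\}$ and let $\varepsilon\to0$, using the analytic ({\L}ojasiewicz-type) structure of the zero locus together with the integrability of $u^{1/3}$, $u^{4/3}$ and $|\nabla u^{1/6}|^2$ to show that the boundary terms over the level sets $\{u=\varepsilon\}$ vanish in the limit.

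For the equality discussion I would trace the chain backwards. Since the inequality came from integrating the pointwise estimate (2.5) with no residual term, equality in the statement forces (2.5), hence (2.4), to be a pointwise equality a.e.\ on $\{u>0\}$, which in turn forces equality in both ingredients: the improved Kato inequality $|\nabla W^{\pm}|^2=\tfrac53|\nabla|W^{\pm}||^2$, and the algebraic inequality (2.3), which is sharp exactly when $W^{\pm}$ has two distinct eigenvalues. If $W^{\pm}\equiv0$ the manifold is anti-self-dual (resp.\ self-dual) and both sides vanish. Otherwise $W^{\pm}$ is nonzero with a repeated eigenvalue on a dense open set, so Proposition 5 in \cite{D}, applied using $\delta W^{\pm}=0$, shows that $g$ is there conformal to a K\"{a}hler metric; analyticity then propagates this across the zero locus to all of $M$.

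For the converse I would use that the eigenvalue multiplicities of $W^{\pm}$, regarded as an endomorphism of $\wedge^2_{\pm}$, are conformally invariant, because a conformal change multiplies $W^{\pm}$ by a positive function. A K\"{a}hler metric has $W^{+}$ with eigenvalues $\{-\tfrac{R}{12},-\tfrac{R}{12},\tfrac{R}{6}\}$, i.e.\ exactly two distinct ones; hence any $g$ conformal to a K\"{a}hler metric inherits the two-eigenvalue property, so (2.3) is an equality for $g$. Combined with the standing hypothesis $\delta W^{\pm}=0$, this two-eigenvalue structure also yields equality in the improved Kato inequality, so (2.4) becomes a pointwise identity and the integral is zero.

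The step I expect to be the main obstacle is the equality case rather than the inequality. Two points need the most care: first, the limiting integration by parts across the zero set of $W^{\pm}$, which is precisely what the analyticity assumption is there to control; and second, matching the two pointwise equality conditions---the algebraic two-eigenvalue condition and the sharp Kato identity---with Derdzinski's conformally-K\"{a}hler characterization, and in particular verifying the converse implication that conformal-to-K\"{a}hler together with $\delta W^{\pm}=0$ forces the Kato identity. This last equivalence is the core of the rigidity, and is where I would concentrate the work, relying on Derdzinski's structure theory in \cite{D}.
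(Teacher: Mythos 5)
Your proposal reproduces the paper's own argument essentially verbatim: the paper likewise takes $\alpha=\frac{1}{6}$ in (2.6) so that the gradient term drops out, yielding (2.18), and in the equality case traces all inequalities back to equality in the Kato inequality and in (2.3), so that at each point $W^{\pm}$ is either null or has exactly two distinct eigenvalues, then uses analyticity and Proposition 5 of Derdzi\'{n}ski \cite{D} to conclude that $(M^4,g)$ is anti-self-dual (resp.\ self-dual) or conformal to a K\"{a}hler manifold. The only differences are in your favor: you explicitly justify the integration by parts across the zero locus of $|W^{\pm}|$ (where $u^{1/6}$ is not smooth) and you sketch the converse (``if'') direction via conformal invariance of the degenerate-spectrum condition, both of which the paper's proof leaves implicit.
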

\begin{proof}
We choose $\alpha=\frac 16$ in (2.6), obtain
\begin{equation}\int_M|W^{\pm}|^{\frac{1}{3}}[R-\sqrt{6}|W^{\pm}|]\leq 0.\end{equation}

If the equality holds in (2.18), all inequalities leading to (2.6)
become equalities. Hence at every point, either $W^{\pm}$ is null, i.e., $M$ is anti-self dual, or it has an eigenvalue of
multiplicity $2$ and another of multiplicity $1$. Since $M^4$ is analytic, either $(M^4, g)$ is anti-self-dual (resp., self-dual), or $W^{\pm}$  has exactly two distinct eigenvalues at each point.
 By Proposition 5 in \cite{D}, $(M^4, g)$ is conformal to a K\"{a}hler manifold .
\end{proof}
\section{Four manifolds with  harmonic Weyl tensor}
\begin{proof}[{\bf Proof of Theorem
1.2}]
By Theorem E, we have that  $W^{+}=0, \int_M|W^{-}|=\frac{1}{6}{\mu^2([g])}$, or $ W^{-}=0, \int_M|W^{+}|=\frac{1}{6}{\mu^2([g])}$.
By Theorem 1.1,  $(M^4, {g})$ is  a K\"{a}hler manifold of positive constant scalar curvature.

 When $W^{+}=0$, by Corollary 1 in \cite{D}, the scalar curvature of $(M^4, {g})$ is $0$, and $\mu([g])=0$. Contradiction.

  When $W^{-}=0$,  by Lemma 7 in \cite{D}, $(M^4, {g})$ is locally symmetric. By the result of Bourguignon \cite{Bo2}, $(M^4, {g})$  is Einstein.
  Then ${g}$ is both Einstein and half conformally flat. By the classification theorem
of Hitchin (see \cite{B}), $(M^4, {g})$ is isometric to either a quotient of $\mathbb{S}^4$ with the round metric or
$\mathbb{CP}^2$ with the Fubini-Study metric. Since we are assuming that is not locally conformal flat, $(M^4, {g})$ is $\mathbb{CP}^2$ with the Fubini-Study metric.
\end{proof}

\begin{corollary}
Let $(M^4, g)$  be a $4$-dimensional   complete Einstein manifold  with   positive scalar curvature. If
\begin{equation}\int_M|W|^{2}=\frac{1}{6}\mu^2([g]),\end{equation}
then  $M^4$ is a $\mathbb{CP}^2$ with the Fubini-Study metric.
\end{corollary}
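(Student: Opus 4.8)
The plan is to reduce Corollary 3.2 directly to Theorem 1.2, since an Einstein metric automatically satisfies the two structural hypotheses of that theorem; the only genuine preliminary work is to pass from completeness to compactness and to verify positivity of the Yamabe constant, after which the equality hypothesis $\int_M|W|^2=\frac16\mu^2([g])$ is fed straight into Theorem 1.2.

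First I would establish compactness. Writing the Einstein condition as $Ric=\frac{R}{4}g$ with $R>0$ constant, the Ricci curvature is bounded below by the positive constant $\frac{R}{4}$. By the Bonnet--Myers theorem, $(M^4,g)$ is therefore compact (with finite fundamental group), so that the Yamabe constant $\mu([g])$, the Weyl functional $\int_M|W|^2$, and the Gauss--Bonnet and Hirzebruch identities used throughout are all available and the compact theory applies.

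Next I would check the remaining hypotheses of Theorem 1.2. For an Einstein metric the Ricci tensor is parallel and $R$ is constant; substituting this into the contracted second Bianchi identity, which expresses $\delta W$ as a combination of covariant derivatives of $Ric$ and of $R$ (equivalently, as a multiple of the Cotton tensor), yields $\delta W=0$, so $g$ has harmonic Weyl tensor (and in particular $\delta W^{\pm}=0$). Moreover, $g$ itself is a metric of positive constant scalar curvature in its own conformal class, so that conformal class carries a positive-scalar-curvature representative; hence $\mu([g])>0$.

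Having verified that $(M^4,g)$ is a compact four-manifold with harmonic Weyl tensor, positive Yamabe constant, and $\int_M|W|^2=\frac16\mu^2([g])$, I would simply invoke Theorem 1.2 to conclude that $(M^4,g)$ is $\mathbb{CP}^2$ with the Fubini--Study metric. Essentially all of the substantive content lives upstream, in Theorem 1.2 and hence in Theorems E and 1.1 together with the rigidity results of Derdzi\'{n}ski that underlie them; this corollary is a specialization to the Einstein setting. The one step I would flag as the main — if modest — obstacle is precisely the completeness-to-compactness reduction, because the entire integral-pinching framework is formulated for compact manifolds, and Bonnet--Myers is exactly what supplies compactness from the hypotheses of completeness and positive scalar curvature.
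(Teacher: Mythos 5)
Your proposal is correct and is exactly the argument the paper intends: the corollary is stated immediately after Theorem 1.2 with no separate proof, being its direct specialization to the Einstein case. Your filling-in of the routine verifications --- Bonnet--Myers for compactness, the contracted second Bianchi identity giving $\delta W=0$ for Einstein metrics, and $R>0$ constant forcing $\mu([g])>0$ --- is precisely what the paper leaves implicit.
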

\begin{remark} If ``$=$'' in (3.1) is replaced by ``$<$'',   Xiao and the author \cite{{FX3}}  proved that $M^4$ is a quotient of the round $\mathbb{S}^4$, which is  proved by Theorem E. For  dimensions $n>4$, under some $L^{\frac n2}$ pinching condition, G. Canto \cite{Ca2} and the author \cite{{FX2},{FX3}}  proved that $M$ is a quotient of the round $\mathbb{S}^n$, respectively.
\end{remark}

\begin{theorem}
Let $(M^4, g)$  be a $4$-dimensional   compact Riemannian manifold  with harmonic Weyl tensor and   positive Yamabe constant. If
 \begin{equation*}\int_M|W|^{2}+2\int_M|\mathring{Ric}|^2
\leq\frac{1}{6}\int_M R^2, \text{i.e.,} \int_M|W|^{2}\leq 16{\pi}^2\chi(M).\end{equation*}
Then  1) $M^4$ is a locally conformally flat manifold. In particular, $\tilde{g}$ is a Yamabe minimizer and $(M^4, \tilde{g})$  is  the stand sphere $\mathbb{S}^4$,  the real projective space $\mathbb{RP}^4$,    the manifold which is isometrically covered by $\mathbb{S}^1\times \mathbb{S}^{3}$ with the product metric,
or the manifold which is isometrically covered by $\mathbb{S}^1\times \mathbb{S}^{3}$ with a rotationally symmetric Derdzi\'{n}ski metric;

2)  $(M^4, {g})$
 is  a $\mathbb{CP}^2$ with the Fubini-Study metric.
\end{theorem}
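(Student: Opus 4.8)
The plan is to reduce the statement to results already in hand (Theorems E and 1.2), Gursky's lower bound \eqref{} for the Yamabe invariant, and a structure theorem for conformally flat metrics with harmonic curvature. First I would record that the two forms of the hypothesis are equivalent: by the Chern--Gauss--Bonnet formula
\[
\int_M|W|^{2}-2\int_M|\mathring{Ric}|^2+\frac16\int_M R^2=32\pi^2\chi(M),
\]
the inequality $\int_M|W|^2+2\int_M|\mathring{Ric}|^2\le\frac16\int_M R^2$ is the same as $\int_M|W|^2\le 16\pi^2\chi(M)$. Next I would invoke Gursky's bound (2.12), $\int_M R^2-12\int_M|\mathring{Ric}|^2\le\mu^2([g])$, which after dividing by $6$ becomes $\frac16\int_M R^2-2\int_M|\mathring{Ric}|^2\le\frac16\mu^2([g])$. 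Combining this with the hypothesis yields the crucial estimate
\[
\int_M|W|^2\le\frac16\int_M R^2-2\int_M|\mathring{Ric}|^2\le\frac16\mu^2([g]).
\]

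The argument then splits according to whether this last inequality is strict. Since $(M^4,g)$ has harmonic Weyl tensor, $\delta W^{+}=\delta W^{-}=0$, so Theorems E and 1.2 are both available. If $\int_M|W|^2=\frac16\mu^2([g])$, then Theorem 1.2 applies verbatim and gives that $(M^4,g)$ is $\mathbb{CP}^2$ with the Fubini--Study metric, which is conclusion 2). Otherwise $\int_M|W|^2<\frac16\mu^2([g])$, whence $\int_M|W^{+}|^2\le\int_M|W|^2<\frac16\mu^2([g])$ and likewise for $W^{-}$; applying Theorem E to each half forces $W^{+}=0$ and $W^{-}=0$, so $W=0$ and $(M^4,g)$ is locally conformally flat. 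The two cases are genuinely exclusive, since $\mu([g])>0$ makes $\frac16\mu^2([g])$ positive while $W=0$ gives $\int_M|W|^2=0$.

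In the conformally flat case I would pass to a Yamabe minimizer $\tilde g$ in the conformal class; it exists because $\mu([g])>0$, and it has constant positive scalar curvature. As local conformal flatness is conformally invariant, $W_{\tilde g}=0$, and together with $R_{\tilde g}$ constant this forces $\delta_{\tilde g}W_{\tilde g}=0$ and $dR_{\tilde g}=0$, i.e. $\tilde g$ has harmonic curvature (its Ricci tensor is Codazzi). I would then invoke the classification of compact locally conformally flat four-manifolds with harmonic curvature and positive scalar curvature (see \cite{Ca,D1}): such a manifold is either a spherical space form — and the only free isometric quotients of the round $\mathbb{S}^4$ are $\mathbb{S}^4$ and $\mathbb{RP}^4$ — or is isometrically covered by $\mathbb{S}^1\times\mathbb{S}^3$ carrying either the product metric or a rotationally symmetric Derdzi\'nski metric. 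This is precisely the list in conclusion 1), and exhibits $\tilde g$ as the asserted Yamabe minimizer.

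The main obstacle is this final step. When $W=0$ the pinching is far from tight (its left-hand side vanishes), so no equality analysis can recover the geometry; the rigidity must come entirely from structure theory. The essential point — and the place where the harmonic Weyl hypothesis is genuinely used — is the observation that the Yamabe minimizer automatically acquires harmonic curvature. It is exactly this that upgrades the merely conformal/diffeomorphism-type classification of conformally flat positive-Yamabe four-manifolds (which otherwise admits far more examples, such as connected sums) into the short, isometrically rigid list with explicit canonical metrics stated in conclusion 1).
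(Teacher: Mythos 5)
Your reduction follows the paper exactly up to the dichotomy: the Chern--Gauss--Bonnet reformulation, Gursky's bound (2.12) giving $\int_M|W|^2\le\frac16\mu^2([g])$, Theorem 1.2 in the equality case, and Theorem E applied to both halves of $W$ in the strict case are precisely the paper's steps; your observation that the Yamabe minimizer $\tilde g$ is conformally flat with constant scalar curvature and hence has harmonic curvature is also correct (and is implicitly needed by the paper). The gap is in your final step: there is no unconditional ``classification of compact locally conformally flat four-manifolds with harmonic curvature and positive scalar curvature'' of the kind you invoke. A Yamabe minimizer on a conformally flat connected sum such as $\#_k(\mathbb{S}^1\times\mathbb{S}^3)$ with $k\ge2$ (which has positive Yamabe constant by Schoen--Yau) is conformally flat with constant positive scalar curvature, hence has harmonic curvature by your own Cotton-tensor argument, yet it is neither a spherical space form nor covered by $\mathbb{S}^1\times\mathbb{S}^3$. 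Catino's theorem in \cite{Ca}, and the results the paper actually uses (Theorems 1.13 and 1.15 of \cite{FX}), all carry an additional integral pinching hypothesis on $\mathring{Ric}$; cited without that hypothesis they prove nothing.

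Moreover, your justification for discarding the pinching --- that when $W=0$ it is ``far from tight'' so ``the rigidity must come entirely from structure theory'' --- is exactly backwards. The hypothesis $\int_M|W|^2+2\int_M|\mathring{Ric}|^2-\frac16\int_M R^2\le0$ equals $2\int_M|W|^2-32\pi^2\chi(M)\le0$ and is therefore conformally invariant; evaluating it on the Yamabe minimizer $\tilde g$, where $W_{\tilde g}=0$ and $\int_M R_{\tilde g}^2=\mu^2([g])$, yields the nontrivial constraint $2\int_M|\mathring{Ric}_{\tilde g}|^2\le\frac16\mu^2([g])=\frac16\int_M R_{\tilde g}^2$. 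This surviving $L^2$ pinching of the trace-free Ricci tensor is exactly what the paper feeds into Theorems 1.13 and 1.15 of \cite{FX} to obtain the rigid list in conclusion 1) (note that the connected-sum counterexamples above have $\chi<0$ and so violate this transferred inequality, which is why they do not contradict the theorem). Your argument is repaired by reinstating the step you dropped: transfer the conformally invariant hypothesis to $\tilde g$ and then apply the pinched classification, as the paper does.
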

\begin{proof}
By the Chern-Gauss-Bonnet formula,
we get
$$\int_M|W|^{2}+2\int_M|\mathring{Ric}|^2
-\frac{1}{6}\int_M R^2=2\int_M|W|^{2}
-32{\pi}^2\chi(M)\leq 0, \text{i.e.,} \int_M|W|^{2}
\leq16{\pi}^2\chi(M).$$

From (2.12), we get
$$\int_M|W|^{2}
-\frac{1}{6}{\mu^2([g])}\leq\int_M|W|^{2}+2\int_M|\mathring{Ric}|^2
-\frac{1}{6}\int_M R^2.$$
Moreover, the  inequality is strict unless $(M^4, g)$ is conformally Einstein.

In the first case ``$<$'', we have $$\int_M|W|^{2}
<\frac{1}{6}{\mu^2([g])}, \text{i.e.,} \int_M|W^{\pm}|^{2}
<\frac{1}{6}{\mu^2([g])}.$$
By Theorem E, we obtain that $M^4$ is conformally flat. Since $\int_M|W|^{2}$, ${\mu^2([g])}$ and $\int_M \sigma_2(A)$ are conformally invariant, there exists a conformal metric $\tilde{g}$ of $g$ such that ${\mu^2([g])}=\int_M R_{\tilde{g}}^2$, and
\begin{equation}\int_M|W_{\tilde{g}}|^{2}+2\int_M|\mathring{Ric}_{\tilde{g}}|^2
-\frac{1}{6}\int_M R_{\tilde{g}}^2=\int_M|W|^{2}+2\int_M|\mathring{Ric}|^2
-\frac{1}{6}\int_M R^2\leq0,\end{equation}
i.e., $$2\int_M|\mathring{Ric}_{\tilde{g}}|^2
-\frac{1}{6}\mu^2([g])\leq0.$$ By Theorems 1.13 and 1.15 in \cite{FX}, $(M^4, \tilde{g})$  is isometric to the round $\mathbb{S}^4$,  the real projective space $\mathbb{RP}^4$,    a manifold which is isometrically covered by $\mathbb{S}^1\times \mathbb{S}^{3}$ with the product metric, or a manifold which is isometrically covered by $\mathbb{S}^1\times \mathbb{S}^{3}$ with a rotationally symmetric Derdzi\'{n}ski metric.

In the second case ``$=$',  we have
$$\int_M|W|^{2}
=\frac{1}{6}{\mu^2([g])}.$$ Here $g$ is conformally
Einstein. By Theorem 1.2, $(M^4, g)$ is a $\mathbb{CP}^2$ with the Fubini-Study metric.
\end{proof}
\begin{remark}
Any compact conformally flat $4$-manifold with $\mu([g])>0$ and $\chi(M)\geq0$ has been classified \cite{{G},{G2}}. Gursky proved that $M^4$  is conformal to the round $\mathbb{S}^4$,  the real projective space $\mathbb{RP}^4$, or    a quotient of $\mathbb{R}^1\times \mathbb{S}^{3}$ with the product metric in \cite{{G},{G2}}.
Comparing  with Theorem D, it is easy to see that the condition in Theorem 3.3 is strong and  the conclusion in Theorem 3.3 is also strong.
\end{remark}

\begin{proof}[{\bf Proof of Theorem
1.4}]
 By the Chern-Gauss-Bonnet formula,
we get
\begin{equation}\int_M|W|^{2}+4\int_M|\mathring{Ric}|^2
-\frac{1}{3}\int_M R^2= 3\int_M|W|^{2}
-64{\pi}^2\chi(M)<0, \text{i.e.,} \int_M|W|^{2}
<\frac{64}{3}{\pi}^2\chi(M).\end{equation}

From (2.12), we get
\begin{equation*}\int_M|W|^{2}
-\frac{1}{3}{\mu^2([g])}\leq\int_M|W|^{2}+4\int_M|\mathring{Ric}|^2
-\frac{1}{3}\int_M R^2.\end{equation*}
Moreover, the  inequality is strict unless $(M^4, g)$ is conformally Einstein.
We have $$\int_M|W|^{2}
<\frac{1}{3}{\mu^2([g])}.$$

 a) $W=0$. Since $\int_M|W|^{2}$, ${\mu^2([g])}$ and $\int_M \sigma_2(A)$ are conformally invariant, there exists a conformally metric $\tilde{g}$ of $g$ such that ${\mu^2([g])}=\int_M R_{\tilde{g}}^2$, and from (3.3) we have $$4\int_M|\mathring{Ric}_{\tilde{g}}|^2
-\frac{1}{3}\mu^2([g])<0.$$ By Theorem 1.13  in \cite{FX}, $(M^4, \tilde{g})$  is the round $\mathbb{S}^4$ or the real projective space $\mathbb{RP}^4$.

 b) $W\neq 0$. By Theorem E, $b_1(M)=0$. Hence $\chi(M)=2+b_2$. By Theorem 3.3, we assume $16{\pi}^2\chi(M)<\int_M|W|^{2}$. By this fact that $\mu^2([g])\leq\mu^2(\mathbb{S}^4)=384\pi^2$ and the  inequality is strict unless $(M^4, g)$ is conformal to $\mathbb{S}^4$, $\int_M|W|^{2}
\leq\frac{1}{3}{\mu^2([g])}$ implies that $\chi(M)\leq 7$.
By Theorem E, we have that  $W^{+}=0$ and $\int_M|W^{-}|^2\geq\frac{1}{6}{\mu^2([g])}$, or $W^{-}=0$ and $\int_M|W^{+}|^2\geq\frac{1}{6}{\mu^2([g])}$. By Theorem F and the Hirzebruch signature formula, $b_2(M)=b^{-}_2(M)\neq0$ or $b_2(M)=b^{+}_2(M)\neq0$. Hence $3\leq\chi(M)=2+b_2\leq7$.

When $W^{-}=0$ and $\int_M|W^{+}|^2\geq\frac{1}{6}{\mu^2([g])}$,  $\chi(M)=2+b_2(M)=2+b^{+}_2(M)\leq7$. By the Hirzebruch signature formula
\begin{equation*}\frac{4\chi(M)}{9}>\frac{1}{48\pi^2}\int_{M}|W_g^{+}|^{2}=b_2^{+},\end{equation*}
  only the case  $b_2^{+}=1$ occurs.
Since $\int_{M}|W_g^{+}|^{2}=48\pi^2=\frac{16\pi^2}{3}(2\chi(M)+3\sigma(M))$, by Remark 2.3,  $\int_{M}|W|^{2}=\int_{M}|W_g^{+}|^{2}=\frac{\mu^2([g])}{6}$. Hence by Theorem 1.2, $(M^4, {g})$  is a $\mathbb{CP}^2$ with the Fubini-Study metric.

 When
$W^{+}=0$ and $\int_M|W^{-}|^2\geq\frac{1}{6}{\mu^2([g])}$. Similarly, we obtain $\int_{M}|W|^{2}=\int_{M}|W_g^{-}|^{2}=\frac{\mu^2([g])}{6}$. From the proof of Theorem 1.2, it can't happen.
\end{proof}

\begin{proof}[{\bf Proof of Theorem
1.5}]
i) When $\chi(M)=0$.

This pinching condition implies $W=0$.
From (3.2), there exists a conformally metric $\tilde{g}$ of $g$ such that ${\mu^2([g])}=\int_M R_{\tilde{g}}^2$ and $$2\int_M|\mathring{Ric}_{\tilde{g}}|^2
-\frac{1}{6}\mu^2([g])=0.$$ By Theorem  1.15 in \cite{FX}, $(M^4, \tilde{g})$  is     a manifold which is isometrically covered by $\mathbb{S}^1\times \mathbb{S}^{3}$ with the product metric, or a manifold which is isometrically covered by $\mathbb{S}^1\times \mathbb{S}^{3}$ with a rotationally symmetric Derdzi\'{n}ski metric.

ii) When $\chi(M)\neq0$. Since $\int_M|W|^{2}\leq\frac{1}{3}{\mu^2([g])}$, $\int_M|W|^{2}
=\frac{64}{3}{\pi}^2\chi(M)$ implies that $\chi(M)\leq 5$.
Since $\int_M|W|^{2}
=\frac{64}{3}{\pi}^2\chi(M)$, by (3.3) and Theorem F, $b_1(M)=0$. Hence $\chi(M)=2+b_2$.

Case 1. In the first case ``$<$'', we have $$\int_M|W|^{2}
<\frac{1}{3}{\mu^2([g])}.$$
From the proof of Theorem 1.4, we have $W^{\mp}=0$ and $\int_M|W^{\pm}|^2\geq\frac{1}{6}{\mu^2([g])}$, and $3\leq\chi(M)=2+b_2(M)=2+b^{+}_2(M)\leq5$. By the Hirzebruch signature formula
\begin{equation*}\frac{4\chi(M)}{9}=\frac{1}{48\pi^2}\int_{M}|W_g^{\pm}|^{2}=b_2^{\pm},\end{equation*} we get  $b_2^{\pm}$ is not integer. Hence there  exists
no such manifold.

Case 2. In the second case ``$=$',  we have
$$\int_M|W|^{2}
=\frac{1}{3}{\mu^2([g])}.$$Here $g$ is conformal to a
Einstein metric.  Since $(M^4, {g})$ has harmonic Weyl tensor, from the proof of Theorem B*, we get that $(M^4, {g})$  is also Einstein. By corollary 1.9, $(M^4, {g})$  is a quotient of $\mathbb{S}^2\times \mathbb{S}^{2}$ with the product metric.
\end{proof}
\begin{theorem}
Let $(M^4, g)$  be a $4$-dimensional   compact Riemannian manifold  with harmonic Weyl tensor and   positive Yamabe constant. If
 \begin{equation*}\frac{1}{6}\mu^2([g])\leq\int_M|W|^{2}\leq\frac{1}{3}\mu^2([g]),\end{equation*}
then
 1) $(M^4, {g})$  is  self-dual, but is not anti-self-dual, which has either even  $\chi(M^4)\leq 4$ and $b_2^{+}=2$ or odd $\chi(M^4)\leq 1$ and $b_2^{+}=1$;

2) $(M^4, {g})$  is  anti-self-dual, but is not self-dual, which has either even  $\chi(M^4)\leq 4$ and $b_2^{-}=2$ or odd $\chi(M^4)\leq 1$ and $b_2^{-}=1$;

3) $(M^4, {g})$  is a $\mathbb{CP}^2$ with the Fubini-Study metric;

4)  $(M^4, g)$ is a quotient of  a quotient of $\mathbb{S}^2\times \mathbb{S}^{2}$ with the product metric.
\end{theorem}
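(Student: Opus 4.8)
The plan is to split $\int_M|W|^2=\int_M|W^+|^2+\int_M|W^-|^2$ and run a trichotomy on how each half compares with $\frac16\mu^2([g])$. I first record that in dimension four the harmonic Weyl condition $\delta W=0$ is equivalent to $\delta W^+=\delta W^-=0$ (both halves of the divergence are governed by the Cotton tensor), so that Theorems E, F, 1.1 and 1.2 are all at my disposal. Writing $a=\int_M|W^+|^2$ and $b=\int_M|W^-|^2$, Theorem E says that $a<\frac16\mu^2$ forces $W^+=0$ and $b<\frac16\mu^2$ forces $W^-=0$; in particular $a$ and $b$ cannot both be $<\frac16\mu^2$, since then $W\equiv 0$ and $a+b=0<\frac16\mu^2$, contradicting the hypothesis $a+b\geq\frac16\mu^2$ (recall $\mu>0$).

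Suppose first that both $a\geq\frac16\mu^2$ and $b\geq\frac16\mu^2$. Then $a+b\geq\frac13\mu^2$, and combined with $a+b\leq\frac13\mu^2$ this forces $a=b=\frac16\mu^2$. Applying Theorem 1.1 to each of $W^\pm$ gives $\nabla W^\pm=0$ together with the eigenvalue equality $16\int_M\sigma_2(A)=\frac16\mu^2$, so by (2.12) the metric $g$ is conformal to an Einstein metric; since $g$ has harmonic Weyl tensor, the transformation-law argument from the proof of Theorem B* upgrades this to $g$ being Einstein. Now $g$ is Einstein of positive scalar curvature with $\int_M|W|^2=\frac13\mu^2$, so Corollary 1.9 applies. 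Because $W^+\neq 0$ and $W^-\neq 0$, the metric is neither conformally flat (ruling out $\mathbb{S}^4,\mathbb{RP}^4$) nor self-dual (ruling out $\mathbb{CP}^2$), leaving only a quotient of $\mathbb{S}^2\times\mathbb{S}^2$ with the product metric, which is conclusion 4).

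It remains to treat the case where exactly one half, say $b$, is $<\frac16\mu^2$; by Theorem E then $W^-=0$, so $b=0$ and $a=\int_M|W|^2\in[\frac16\mu^2,\frac13\mu^2]$, and $(M,g)$ is self-dual. (The mirror case $a<\frac16\mu^2$ gives $W^+=0$ and, after reversing orientation so that $W^+\leftrightarrow W^-$, reduces to the present one, yielding the anti-self-dual conclusion 2); note the boundary value $a=\frac16\mu^2$ with $W^+=0$ cannot occur, by the contradiction reached in the $W^+=0$ branch of the proof of Theorem 1.2.) If $a=\frac16\mu^2$, Theorem 1.2 gives that $(M,g)$ is $\mathbb{CP}^2$ with the Fubini--Study metric, conclusion 3). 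If $a>\frac16\mu^2$, then $W^+\neq 0$, so $(M,g)$ is self-dual but not anti-self-dual. Since $W^-=0$, Theorem F gives $b_2^-=0$, whence $\sigma=b_2^+$ and the Hirzebruch formula (1.1) reads $48\pi^2 b_2^+=\int_M|W|^2$. Using the sharp bound $\mu^2([g])\leq\mu^2(\mathbb{S}^4)=384\pi^2$ together with $\int_M|W|^2\leq\frac13\mu^2$ yields $b_2^+\leq\frac83$, while $W^+\neq 0$ makes $\sigma=b_2^+$ a positive integer; hence $b_2^+\in\{1,2\}$. Writing $\chi=2-2b_1+b_2^+$, the value $b_2^+=2$ gives $\chi=4-2b_1$, which is even and $\leq 4$, as claimed.

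The delicate point is the subcase $b_2^+=1$, where I must rule out $b_1=0$ in order to obtain the asserted $\chi\leq 1$. If $b_1=0$ then $\chi=3$, $\sigma=1$, and $\int_M|W^+|^2=48\pi^2=\frac{16}{3}\pi^2(2\chi+3\sigma)$; since $b_2^+=1$ gives $H^2_+(M)\neq 0$, this is precisely the equality case of Theorem A, so $g$ is conformal to a positive K\"ahler--Einstein metric, necessarily $\mathbb{CP}^2$ with the Fubini--Study metric. But then conformal invariance of $\int_M|W|^2$ and $\mu^2([g])$ forces $\int_M|W|^2=\frac16\mu^2$, contradicting $a>\frac16\mu^2$. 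Hence $b_1\geq 1$ and $\chi=3-2b_1\leq 1$ is odd, giving conclusion 1). I expect this elimination of the $b_2^+=1$, $b_1=0$ configuration via the rigidity (equality) statement of Theorem A, together with the careful bookkeeping of the Hirzebruch and Euler relations that pins down $b_2^\pm$ and the parity of $\chi$, to be the main obstacle; the remainder is an assembly of Theorems E, F, 1.1, 1.2 and Corollary 1.9.
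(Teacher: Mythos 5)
Your overall architecture — splitting $\int_M|W|^2$ into $a=\int_M|W^+|^2$ and $b=\int_M|W^-|^2$, the Betti-number bookkeeping via the Hirzebruch formula $48\pi^2 b_2^{\pm}=\int_M|W^{\pm}|^2$, the bound $\mu^2([g])\leq\mu^2(\mathbb{S}^4)=384\pi^2$ giving $b_2^{\pm}\leq 2$, and the elimination of the configuration $b_2^{+}=1$, $b_1=0$, $\chi=3$ — matches the paper's proof in substance; your use of the equality case of Theorem A for that elimination is a legitimate variant of the paper's route through Theorem B and Remark 2.3, and it lands on the same conclusion that this configuration forces $\int_M|W|^2=\frac16\mu^2([g])$. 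The genuine gap is in your fourth case $a=b=\frac16\mu^2([g])$. You assert that Theorem 1.1 yields, besides $\nabla W^{\pm}=0$, the ``eigenvalue equality'' $16\int_M\sigma_2(A)=\frac16\mu^2([g])$, and then invoke equality in (2.12) to conclude that $g$ is conformally Einstein. Theorem 1.1 gives no such identity. What its equality analysis actually provides is that $|W^{\pm}|$ is constant and $g$ itself is a Yamabe minimizer of constant scalar curvature, so $\mu^2([g])=\int_M R^2$; consequently $16\int_M\sigma_2(A)=\frac16\int_M R^2-2\int_M|\mathring{Ric}|^2=\frac16\mu^2([g])-2\int_M|\mathring{Ric}|^2$, and your claimed equality is literally the assertion $\mathring{Ric}\equiv 0$ — the Einstein condition you are trying to derive. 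The step is circular: the equality case of Gursky's estimate (2.12) is exactly ``conformally Einstein,'' so it cannot be used as the source of that fact.

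The paper closes precisely this hole by a different mechanism: from Theorem 1.1, $(M^4,g)$ is K\"ahler of positive constant scalar curvature with parallel $W^{\pm}$; since the Weyl tensor is harmonic, Proposition 1 of Derdzi\'nski \cite{D} gives that the Ricci tensor is parallel, hence $\nabla Rm=0$ and $M$ is locally symmetric; the maximum principle applied to (2.4) pins the eigenvalues of $W^{\pm}$ to $\{-\frac{R}{12},-\frac{R}{12},\frac{R}{6}\}$, so $Rm$ has eigenvalues $\{0,0,1,0,0,1\}$, and the classification of four-dimensional symmetric spaces yields a quotient of $\mathbb{S}^2\times\mathbb{S}^2$ with the product metric. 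Your downstream steps (the transformation-law upgrade from the proof of Theorem B* and the filtering through Corollary 1.9 using $W^{+}\neq0\neq W^{-}$) would indeed work once Einstein-ness is in hand, but as written your branch 4) never gets there: you need the parallel-Ricci input from \cite{D} (or an equivalent observation, e.g.\ that a metric K\"ahler with respect to both orientations has holonomy in $U(1)\times U(1)$ and so splits locally as a product of surfaces), and your proposal omits it.
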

\begin{proof}
By Theorem E, we get that $W^{-}=0$, $W^{+}=0$ or $\int_{M}|W^{\pm}|^2=\frac{1}{6}\mu^2([g])$.

When $W^{\mp}=0$, $\int_{M}|W^{\pm}|^2\geq\frac{1}{6}\mu^2([g])$. By Theorem E , we have $b_2^{\mp}=0$.  By the Hirzebruch signature formula
\begin{equation}\pm\frac{1}{48\pi^2}\int_{M}|W_g^{\pm}|^{2}=\frac{1}{48\pi^2}\int_{M}(|W_g^{+}|^{2}-|W_g^{-}|^{2})=b_2^{+}-b_2^{-}=\pm b_2^{\pm}=\sigma(M),\end{equation} we get  $\pm\sigma(M)=b_2^{\pm}\geq1$. Since $\int_M|W|^{2}\leq\frac{1}{3}\mu^2([g])$, by this fact that $\mu^2([g])\leq\mu^2(\mathbb{S}^4)=384\pi^2$ and the  inequality is strict unless $(M^4, g)$ is conformal to $\mathbb{S}^4$, we get $b_2^{\pm}\leq2$. Then we get $\chi(M)\leq 4$.

If $\chi(M)=3$, then $b_2^{\pm}=1$ and $b_1=0$.
By Remark 2.3, we have $$\int_{M}|W^{\pm}|^2\geq\frac {16}{3}\pi^2(2\chi(M^4)\pm3\sigma(M^4)).$$ Combining with (3.4), we have
$$48\pi^2=\pm48\pi^2\sigma(M^4)=\int_{M}|W^{\pm}|^2\geq\frac {16}{3}\pi^2(2\chi(M^4)\pm3\sigma(M^4))=48\pi^2.$$
By Theorem B and Remark 2.3, $\int_{M}|W|^2=\frac 16\mu^2([g])$. By Theorem 1.2,  $(M^4, {g})$  is a $\mathbb{CP}^2$ with the Fubini-Study metric.

When $\int_{M}|W^{\pm}|^2=\frac{1}{6}\mu^2([g])$, by Theorem 1.1 $(M^4, {g})$  is a K\"{a}hler manifold of positive constant scalar curvature, and  the Weyl tensor is  parallel. Since $(M^4, {g})$  is a K\"{a}hler manifold with harmonic Weyl tensor, by Proposition 1 in \cite{D}, the Ricci tensor is parallel. Hence $\nabla {Rm}=0$, i.e., $M$ is locally symmetric.  From (2.4), by the maximum principle we get $|W^{\pm}|^2=\frac{R^2}{6}$, and $W^{\pm}$ has eigenvalues $\{-\frac{R}{12},-\frac{R}{12},\frac{R}{6}\}$.  Thus $Rm$ has eigenvalues $\{0,0,1,0,0,1\}$.  By the classification of four-dimensional symmetric spaces, it is isometric to a quotient of $\mathbb{S}^2\times \mathbb{S}^{2}$ with the product metric.
\end{proof}

\section{Four manifolds with  harmonic curvature}
\begin{theorem}
Let $(M^4, g)$  be a   $4$-dimensional   compact Riemannian manifold  with harmonic curvature and   positive  scalar curvature. If
\begin{equation}\int_M|W|^{2}+2\int_M|\mathring{Ric}|^2
=\frac{1}{6}\mu^2([g]),\end{equation}
then 1) $M^4$ is a $\mathbb{CP}^2$ with the Fubini-Study metric;

2) $M^4$ is covered isometrically by $\mathbb{S}^1\times \mathbb{S}^{3}$ with the product metric;

3) $M^4$ is covered isometrically by $\mathbb{S}^1\times \mathbb{S}^{3}$ with a rotationally symmetric Derdzi\'{n}ski metric.
\end{theorem}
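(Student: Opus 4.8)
The plan is to leverage the fact that harmonic curvature is far stronger than the half-harmonic-Weyl hypothesis of Section 2, and to feed everything into Theorem E and Theorem 1.2. First I would note that $\delta Rm=0$ forces $R$ to be a (positive) constant and makes the Cotton tensor vanish, so that $\delta W^{+}=\delta W^{-}=0$ and $\mu([g])>0$; hence Theorems E, 1.1, 1.2, the Gursky inequality (2.12), and the Chern--Gauss--Bonnet identity are all at my disposal. Discarding the nonnegative term in (4.1) gives $\int_M|W|^2\le\frac16\mu^2([g])$, and therefore $\int_M|W^{\pm}|^2\le\frac16\mu^2([g])$ for each sign separately.

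Next I would run the trichotomy that Theorem E produces for $W^{+}$ and for $W^{-}$. If both were equalities, then $\int_M|W|^2=\frac13\mu^2([g])$ and (4.1) would force $2\int_M|\mathring{Ric}|^2=-\frac16\mu^2([g])<0$, which is absurd, so this case is vacuous. If exactly one sign is an equality, Theorem E makes $g$ half conformally flat and Theorem 1.1 makes it K\"ahler of positive constant scalar curvature; by Corollary 1 of \cite{D} the surviving possibility is the self-dual one $W^{-}=0$ (the anti-self-dual alternative would give $R=0$), so $\int_M|W|^2=\int_M|W^{+}|^2=\frac16\mu^2([g])$ and Theorem 1.2 identifies $(M^4,g)$ with $\mathbb{CP}^2$ carrying the Fubini--Study metric. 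This is conclusion 1), and (4.1) is then consistent with $\int_M|\mathring{Ric}|^2=0$.

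The remaining case is $W^{+}=W^{-}=0$, i.e. $g$ conformally flat, where (4.1) reads $2\int_M|\mathring{Ric}|^2=\frac16\mu^2([g])$. Here I would use that $\int_M|W|^2+2\int_M|\mathring{Ric}|^2-\frac16\int_M R^2=2\int_M|W|^2-32\pi^2\chi(M)$ is a conformal invariant; evaluating it on the Yamabe minimizer $\tilde g$ (for which $\int_M R_{\tilde g}^2=\mu^2([g])$) and using $\int_M R_g^2\ge\mu^2([g])$ yields $2\int_M|\mathring{Ric}_{\tilde g}|^2\le\frac16\mu^2([g])$. Theorems 1.13 and 1.15 of \cite{FX} then leave only the round $\mathbb{S}^4$, $\mathbb{RP}^4$ and the two rotationally symmetric families on $\mathbb{S}^1\times\mathbb{S}^3$. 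The round options are discarded because (4.1) gives $\int_M|\mathring{Ric}_g|^2=\frac1{12}\mu^2([g])>0$, so $g$ is not Einstein, whereas a constant-scalar-curvature metric conformal to the round sphere is round by Obata's theorem.

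The step I expect to be the main obstacle is upgrading the last conclusion from $\tilde g$ to the given metric $g$, since the theorem asserts that $(M^4,g)$ itself --- not merely a conformal representative --- is covered by $\mathbb{S}^1\times\mathbb{S}^3$ with the product or Derdzi\'nski metric. This is exactly where harmonic curvature, rather than the weaker harmonic Weyl tensor of Theorem 1.5, becomes indispensable: a conformally flat metric with harmonic curvature and constant scalar curvature is locally a warped product of an interval with a space of constant curvature, and its compact, positively curved, non-Einstein representatives are precisely the product and rotationally symmetric Derdzi\'nski metrics on $\mathbb{S}^1\times\mathbb{S}^3$. I would therefore invoke this local structure theory to conclude that $g$ is one of these metrics directly, with the pinching (4.1) serving only to eliminate the Einstein (round) case.
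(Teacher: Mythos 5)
Your argument agrees with the paper's for most of its length: the trichotomy produced by Theorem E, the vacuity of the double-equality case, and the single-equality branch, where (4.1) forces $\mathring{Ric}=0$ and Theorems 1.1/1.2 (the paper uses the equivalent Corollary 3.1) identify $\mathbb{CP}^2$ with the Fubini--Study metric, are all sound and essentially the paper's Case 1. Likewise, in the both-strict branch you correctly land on $W=0$ with $\int_M|\mathring{Ric}|^2=\frac{1}{12}\mu^2([g])$, and your Obata argument for discarding the round alternatives is fine. The problem is the very last step, and you identified its location correctly but your proposed patch does not work.

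The closing appeal to ``local structure theory'' --- that a conformally flat metric with harmonic curvature and constant scalar curvature is locally a warped product of an interval over a constant-curvature fiber, whose compact non-Einstein positive-scalar representatives are exactly the two $\mathbb{S}^1\times\mathbb{S}^3$ families --- is not an available theorem. Conformal flatness plus constant $R$ does make the Ricci tensor Codazzi, but a Codazzi tensor need not have the pointwise two-eigenvalue structure (one eigenvalue of multiplicity one) that the warped-product splitting requires, and the classification of compact conformally flat manifolds with constant positive scalar curvature \emph{without} a pinching hypothesis is not known; this is precisely why Catino \cite{Ca} and Theorem 1.15 of \cite{FX} carry an integral pinching assumption. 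So your remark that the pinching ``serv[es] only to eliminate the Einstein (round) case'' inverts the logic: the equality $\int_M|\mathring{Ric}|^2=\frac{1}{12}\mu^2([g])$ is the essential input to the classification, not a side condition. Moreover, the detour through the Yamabe minimizer $\tilde{g}$ that created this transfer problem is unnecessary (and genuinely troublesome, since conformal classes on $\mathbb{S}^1\times\mathbb{S}^3$ contain several constant-scalar-curvature metrics, so knowing $\tilde{g}$ does not pin down $g$): the metric $g$ itself has harmonic curvature, positive constant scalar curvature, $W=0$, and the exact equality above, so Theorem 1.15 of \cite{FX} applies \emph{directly} to $(M^4,g)$ and yields conclusions 2) and 3) for $g$. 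That is exactly what the paper does; with this substitution your proof closes.
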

\begin{proof} Case 1. $\mathring{Ric}=0$, i.e., $M$ is Einstein. By corollary 3.1, we complete the proof of Theorem 4.1.

Case 2. $\mathring{Ric}\neq0$. It is easy to see from (4.1) that $\int_{M}|W_g^{\pm}|^{2}<\frac 16\mu^2([g])$. By Theorem E, we have $W^{\pm}=0$, thus $W=0$, i.e., $M$ is locally conformally flat and $\int_M|\mathring{Ric}|^2=\frac {1}{12}\mu^2([g])$,
By Theorem 1.15 in \cite{FX}, (4.1) implies that $(M^4, g)$ is 2) or 3).
\end{proof}

\begin{theorem}
Let $(M^4, g)$  be a   $4$-dimensional   compact Riemannian manifold  with harmonic curvature and   positive  scalar curvature. If
\begin{equation}\int_M|W|^{2}+2\int_M|\mathring{Ric}|^2
<\frac{1}{6}\mu^2([g]),\end{equation}
then $M^4$ is a quotient of the round $\mathbb{S}^4$.
\end{theorem}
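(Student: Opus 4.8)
The plan is to follow the case analysis in the proof of Theorem 4.1, replacing every equality by a strict inequality so that the borderline $\mathbb{S}^1\times\mathbb{S}^3$-type examples get excluded. First I would observe that, since $\int_M|\mathring{Ric}|^2\ge 0$, the hypothesis (4.2) forces $\int_M|W^{\pm}|^{2}\le\int_M|W|^{2}<\frac16\mu^2([g])$ for both signs. Theorem E then yields $W^{+}=W^{-}=0$, so $(M^4,g)$ is locally conformally flat; and since $g$ has harmonic curvature, its scalar curvature is a positive constant. I would then split according to whether $\mathring{Ric}\equiv 0$ or not. In the Einstein case $\mathring{Ric}\equiv 0$, condition (4.2) reads $\int_M|W|^{2}<\frac16\mu^2([g])$, and the strict Einstein rigidity recorded in Remark 3.2 (the strict analogue of Corollary 3.1, obtained via Theorem E in \cite{FX3}) identifies $(M^4,g)$ with a quotient of the round $\mathbb{S}^4$.

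For the remaining case $\mathring{Ric}\not\equiv 0$: since $W=0$, (4.2) becomes $\int_M|\mathring{Ric}|^2<\frac1{12}\mu^2([g])$. As in the proofs of Theorems 1.4 and 1.5, I would pass to the Yamabe minimizer $\tilde g$ in the conformal class, normalized so that $\mu^2([g])=\int_M R_{\tilde g}^2$. Using the conformal invariance of $\int_M|W|^2$, of $\mu^2([g])$ and of $\int_M\sigma_2(A)$, together with the fact that $R_g$ is constant (hence $\mu^2([g])\le\int_M R_g^2$), the pinching transfers to $4\int_M|\mathring{Ric}_{\tilde g}|^2<\frac13\mu^2([g])$. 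Theorem 1.13 in \cite{FX} then forces $(M^4,\tilde g)$ to be the round $\mathbb{S}^4$ or the real projective space $\mathbb{RP}^4$, and in particular $M^4$ is diffeomorphic to $\mathbb{S}^4$ or $\mathbb{RP}^4$.

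The main obstacle is reconciling this with the original metric: Theorem 1.13 controls only $\tilde g$, whereas the assertion concerns $g$, which a priori satisfies merely $\mathring{Ric}_g\not\equiv 0$ and $\tilde g=\lambda^2 g$. I would resolve this through the classification of compact conformally flat four-manifolds with harmonic curvature (equivalently, conformally flat with constant scalar curvature): when $\mathring{Ric}_g\not\equiv 0$ the Ricci tensor has exactly two distinct constant eigenvalues and $(M^4,g)$ is isometrically covered by $\mathbb{S}^1\times\mathbb{S}^3$, either with the product metric or with a rotationally symmetric Derdzi\'{n}ski metric; its universal cover $\mathbb{R}\times\mathbb{S}^3$ is noncompact, which is incompatible with $M^4$ being diffeomorphic to $\mathbb{S}^4$ or $\mathbb{RP}^4$ (whose universal cover is $\mathbb{S}^4$). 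Equivalently, these borderline examples realize $\int_M|\mathring{Ric}|^2=\frac1{12}\mu^2([g])$ and are therefore excluded by the strict inequality in (4.2). Hence the case $\mathring{Ric}_g\not\equiv 0$ cannot occur, only the Einstein case survives, and $(M^4,g)$ is a quotient of the round $\mathbb{S}^4$.
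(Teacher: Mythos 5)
Your first two steps track the paper: Theorem E applied to the strict pinching gives $W^{+}=W^{-}=0$, harmonic curvature gives constant positive scalar curvature, and the Einstein case is correctly disposed of (indeed more simply than you do: with $W=0$ and $\mathring{Ric}=0$ the metric already has constant positive sectional curvature, so Remark 3.2 is not even needed). Your conformal-transfer computation $\int_M|\mathring{Ric}_{\tilde g}|^2<\frac{1}{12}\mu^2([g])$ is also correct. But the resolution of your self-described ``main obstacle'' in the case $\mathring{Ric}\not\equiv 0$ is a genuine gap, and the obstacle itself is self-inflicted. Theorem 1.13 of \cite{FX} is a rigidity theorem for manifolds with \emph{harmonic curvature}, and the original metric $g$ already qualifies: it has harmonic curvature by hypothesis, is locally conformally flat by Theorem E, and satisfies $\int_M|\mathring{Ric}_g|^2<\frac{1}{12}\mu^2([g])$. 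This is exactly what the paper does --- it applies Theorem 1.13 of \cite{FX} directly to $(M^4,g)$, concluding that $(M^4,g)$ is \emph{isometric} to a quotient of the round $\mathbb{S}^4$, which contradicts $\mathring{Ric}\neq 0$ outright. No passage to the Yamabe minimizer $\tilde g$, and hence no reconciliation between $\tilde g$ and $g$, is required.

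The statement you substitute to bridge the gap --- that every compact conformally flat four-manifold with constant positive scalar curvature (equivalently, with harmonic curvature; the equivalence is correct, since vanishing of the Cotton tensor plus constant $R$ makes the Ricci tensor a Codazzi tensor) and $\mathring{Ric}\not\equiv 0$ is isometrically covered by $\mathbb{S}^1\times\mathbb{S}^3$ --- is not a known theorem and is in fact false without a pinching hypothesis. A Yamabe minimizer on a conformally flat connected sum such as $(\mathbb{S}^1\times\mathbb{S}^3)\#(\mathbb{S}^1\times\mathbb{S}^3)$ is conformally flat with constant positive scalar curvature, hence has harmonic curvature, is non-Einstein, and is not covered by $\mathbb{S}^1\times\mathbb{S}^3$ (e.g.\ $b_1=2$). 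Such metrics satisfy $\int_M|\mathring{Ric}|^2\geq\frac{1}{12}\mu^2([g])$, so they do not threaten the theorem, but they do show that the classification you invoke cannot hold in the generality you use it; the pinching is what excludes them, and it must be brought to bear on $g$ itself, as in Theorem 1.13 of \cite{FX}. (A side inaccuracy: the rotationally symmetric Derdzi\'nski metrics do not have \emph{constant} Ricci eigenvalues --- their Ricci tensor is not parallel.) If you insist on arguing through $\tilde g$, the correct repair is different: once $M^4$ is known to be diffeomorphic to $\mathbb{S}^4$ or $\mathbb{RP}^4$, Kuiper's theorem shows the conformally flat metric $g$, lifted to the universal cover $\mathbb{S}^4$, lies in the round conformal class, and Obata's theorem then forces the constant-scalar-curvature metric $g$ to have constant curvature, hence to be Einstein, contradicting $\mathring{Ric}\neq 0$. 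As written, however, your non-Einstein case rests on a false classification, so the proof is incomplete.
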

\begin{proof} If $\mathring{Ric}\neq0$. It is easy to see from (4.2) that $\int_{M}|W_g^{\pm}|^{2}<\frac 16\mu^2([g])$. By Theorem E, we have $W^{\pm}=0$, thus $W=0$, i.e., $M$ is locally conformally flat and $\int_M|\mathring{Ric}|^2<\frac {1}{12}\mu^2([g])$, By Theorem 1.13 in \cite{FX}, (4.2) implies that $(M^4, g)$ is a quotient of the round $\mathbb{S}^4$. Contradiction.

(4.2) implies that $\mathring{Ric}=0$, i.e., $M$ is Einstein, and $\int_{M}|W_g^{\pm}|^{2}<\frac 16\mu^2([g])$. By Theorem E, $W_g^{\pm}=0$, i.e., $M$ is locally conformally flat. Hence $M^4$ is constant curvature space. Since the Yamabe constant is positive, $M^4$ is  a quotient of the round $\mathbb{S}^4$.
\end{proof}

\begin{corollary}
Let $(M^4, g)$  be a $4$-dimensional   compact Riemannian manifold  with harmonic curvature and   positive  scalar curvature. If
 \begin{equation*}\int_M|W|^{2}+4\int_M|\mathring{Ric}|^2
\leq\frac{1}{6}\int_M R^2, \text{i.e.,} \int_M|W|^{2}+\int_M|\mathring{Ric}|^2\leq 16{\pi}^2\chi(M),\end{equation*} then  1) $M^4$ is a quotient of the round $\mathbb{S}^4$;

2) $M^4$ is a $\mathbb{CP}^2$ with the Fubini-Study metric.
\end{corollary}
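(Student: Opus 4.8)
The plan is to reduce the corollary to Theorems 4.1 and 4.2 by rewriting the pointwise-looking hypothesis as the scale-invariant pinching $\int_M|W|^2 + 2\int_M|\mathring{Ric}|^2 \le \frac16\mu^2([g])$, and then to sharpen the resulting alternative by an Euler-characteristic argument.

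First I would record the two equivalent forms of the hypothesis. Applying the Chern--Gauss--Bonnet formula $\int_M|W|^2 - 2\int_M|\mathring{Ric}|^2 + \frac16\int_M R^2 = 32\pi^2\chi(M)$ to eliminate $\int_M R^2$, the assumption $\int_M|W|^2 + 4\int_M|\mathring{Ric}|^2 \le \frac16\int_M R^2$ becomes exactly $\int_M|W|^2 + \int_M|\mathring{Ric}|^2 \le 16\pi^2\chi(M)$; in particular it forces $\chi(M)\ge 0$. This reformulation is the feature I will exploit at the end.

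Next I would combine the hypothesis with Gursky's lower bound (2.12), written in the form $\frac16\int_M R^2 - 2\int_M|\mathring{Ric}|^2 \le \frac16\mu^2([g])$. Subtracting $2\int_M|\mathring{Ric}|^2$ from both sides of the hypothesis and then invoking (2.12) yields
\[
\int_M|W|^2 + 2\int_M|\mathring{Ric}|^2 \le \frac16\int_M R^2 - 2\int_M|\mathring{Ric}|^2 \le \frac16\mu^2([g]).
\]
If this chain is strict, Theorem 4.2 gives at once that $M^4$ is a quotient of the round $\mathbb{S}^4$, which is conclusion 1). If it is an equality, then both the hypothesis and (2.12) are equalities --- so $g$ is conformally Einstein and the reformulated pinching holds with equality, $\int_M|W|^2 + \int_M|\mathring{Ric}|^2 = 16\pi^2\chi(M)$ --- and Theorem 4.1 applies, leaving three possibilities: $\mathbb{CP}^2$ with the Fubini--Study metric, or an isometric quotient of $\mathbb{S}^1\times\mathbb{S}^3$ with the product metric, or an isometric quotient of $\mathbb{S}^1\times\mathbb{S}^3$ with a rotationally symmetric Derdzi\'{n}ski metric.

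The only nonroutine step, and the one I expect to be the crux, is to discard the two $\mathbb{S}^1\times\mathbb{S}^3$ possibilities, since it is exactly this that upgrades the three-way alternative of Theorem 4.1 to the two-way conclusion claimed here. For this I would use $\chi(\mathbb{S}^1\times\mathbb{S}^3)=0$, so that any $M^4$ covered by $\mathbb{S}^1\times\mathbb{S}^3$ has $\chi(M)=0$. Substituting $\chi(M)=0$ into the equivalent form $\int_M|W|^2 + \int_M|\mathring{Ric}|^2 \le 16\pi^2\chi(M)=0$ forces $W\equiv 0$ and $\mathring{Ric}\equiv 0$. But in both excluded cases one has, as read off from the proof of Theorem 4.1, $W=0$ together with $\int_M|\mathring{Ric}|^2 = \frac1{12}\mu^2([g])>0$, the positivity coming from the fact that positive scalar curvature forces $\mu([g])>0$; this contradiction eliminates them. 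Hence only the $\mathbb{CP}^2$ case survives in the equality branch, which is conclusion 2), and the corollary follows.
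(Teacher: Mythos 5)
Your proof is correct, and up through the dichotomy it coincides with the paper's own: both rewrite the hypothesis via Chern--Gauss--Bonnet as $\int_M|W|^2+\int_M|\mathring{Ric}|^2\le 16\pi^2\chi(M)$, combine it with Gursky's bound (2.12) to get the chain $\int_M|W|^2+2\int_M|\mathring{Ric}|^2\le\frac16\int_M R^2-2\int_M|\mathring{Ric}|^2\le\frac16\mu^2([g])$, and dispatch the strict case by Theorem 4.2. Where you genuinely diverge is the equality branch. The paper exploits the equality case of (2.12): $g$ is conformally Einstein, and since harmonic curvature forces constant scalar curvature, an Obata-type argument makes $g$ itself Einstein; then $\mathring{Ric}=0$ reduces the equality to $\int_M|W|^2=\frac16\mu^2([g])$, and Corollary 3.1 gives $\mathbb{CP}^2$ directly, so the $\mathbb{S}^1\times\mathbb{S}^3$ alternatives never appear. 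You instead feed the equality $\int_M|W|^2+2\int_M|\mathring{Ric}|^2=\frac16\mu^2([g])$ into Theorem 4.1 and then eliminate its conclusions 2) and 3) topologically: a compact manifold covered isometrically by $\mathbb{S}^1\times\mathbb{S}^3$ has $\chi(M)=0$ (the cover is finite by compactness), so the reformulated hypothesis forces $W\equiv0$ and $\mathring{Ric}\equiv0$, contradicting $\int_M|\mathring{Ric}|^2=\frac1{12}\mu^2([g])>0$ in those cases --- a sound step, since positive scalar curvature does give $\mu([g])>0$, and one could even bypass the appeal to the proof of Theorem 4.1 by noting that neither the product metric nor the Derdzi\'{n}ski metrics are Einstein. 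Your route buys independence from the conformally-Einstein/Obata step, which the paper states rather tersely, at the cost of routing through Theorem 4.1 (whose proof itself invokes Corollary 3.1) plus an extra Euler-characteristic argument; the paper's route is shorter and keeps the rigidity mechanism (equality in (2.12)) explicit. Both arguments are complete.
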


\begin{proof}
By the Chern-Gauss-Bonnet formula,
we have
\begin{equation*}
\begin{split}\int_M|W|^{2}+4\int_M|\mathring{Ric}|^2
-\frac{1}{6}\int_M R^2=2\int_M|W|^{2}+2\int_M|\mathring{Ric}|^2
-32{\pi}^2\chi(M)\leq 0,\\ \text{i.e.,} \int_M|W|^{2}+\int_M|\mathring{Ric}|^2
\leq16{\pi}^2\chi(M).
\end{split}
\end{equation*}

From (2.12), we get
$$\int_M|W|^{2}+2\int_M|\mathring{Ric}|^2
-\frac{1}{6}{\mu^2([g])}\leq\int_M|W|^{2}+4\int_M|\mathring{Ric}|^2
-\frac{1}{6}\int_M R^2.$$
Moreover, the  inequality is strict unless $(M^4, g)$ is conformally Einstein.

In the first case ``$<$'',
Theorem 4.2 immediately implies $M^4$ is  a quotient of the round $\mathbb{S}^4$.

In the second case ``$=$',  we have that $g$ is conformally Einstein and
$$\int_M|W|^{2}+2\int_M|\mathring{Ric}|^2
=\frac{1}{6}{\mu^2([g])}.$$ Since $g$ has constant scalar curvature, $g$ is Einstein from the proof of Obata Theorem. By corollary 3.1, $(M^4, g)$ is $\mathbb{CP}^2$ with the Fubini-Study metric.
\end{proof}

\begin{proof}[{\bf Proof of Theorem
1.7}]
 It is easy to see from (1.3) that $\int_{M}|W_g^{+}|^{2}<\frac 16\mu^2([g])$, $\int_{M}|W_g^{\pm}|^{2}=\frac 16\mu^2([g])$  or $\int_{M}|W_g^{-}|^{2}<\frac 16\mu^2([g])$.

 When $\int_{M}|W_g^{+}|^{2}<\frac 16\mu^2([g])$, by Theorem E, we have $W^{+}=0$. Since $\frac{R}{6}I-W^{+}=\frac{R}{6}I>0$, by Theorem 4.3 of \cite{MW}, (a), (c)and (d) in Theorem 4.3 of \cite{MW} occur. If $(M^4, {g})$  is a anti-self dual K\"{a}hler manifold, by Corollary 1 in \cite{D}, the scalar curvature of $(M^4, {g})$ is $0$. Hence only (a)  in Theorem 4.3 of \cite{MW} can  occur, i.e., $M$ is conformal flat. By Theorem 1.15 in \cite{FX}, (1.3) implies that $(M^4, g)$ is 2) or 3).

When $\int_{M}|W_g^{-}|^{2}<\frac 16\mu^2([g])$, by Theorem E, we have $W^{-}=0$. By the result of Bourguignon, $g$ is conformal flat or Einstein.
If $g$ is conformal flat, $(M^4, g)$ is 2) or 3).
If $g$ is Einstein, $g$ is both Einstein and half conformally flat. By the classification theorem
of Hitchin, $(M^4, g)$ is isometric to either $\mathbb{S}^4$ with the round metric or
$\mathbb{CP}^2$ with the Fubini-Study metric $g$. Since we are assuming that is not locally conformal flat, $(M^4, g)$ is $\mathbb{CP}^2$ with the Fubini-Study metric. Since $\mathbb{CP}^2$ is Einstein, (1.3) implies that $\int_{M}|W_g^{\mp}|^{2}=\frac 13\mu^2([g])$.
On the other hand,  $\mu([g_{\mathbb{CP}^2}])=\left(\int_{\mathbb{CP}^2} R^2\right)^{\frac12}=12\sqrt{2}\pi$. By the Chern-Gauss-Bonnet formula and $\chi(\mathbb{CP}^2)=3$, we have $\left(\int_{\mathbb{CP}^2} |W|^2\right)^{\frac12}=4\sqrt{3}\pi=\frac 16\mu^2([g])$. Contradiction.

When $\int_{M}|W_g^{\pm}|^{2}=\frac 16\mu^2([g])$, from the proof of  Theorem 3.5, $M^4$ is isometric to a quotient of $\mathbb{S}^2\times \mathbb{S}^{2}$ with the product metric.
\end{proof}

\begin{proof}[{\bf Proof of Theorem
1.8}]By Theorem 1.4 in \cite{F},
 $M^4$ is Einstein. It is easy to see from (1.4) that $\int_{M}|W_g^{+}|^{2}<\frac 16\mu^2([g])$ or $\int_{M}|W_g^{-}|^{2}<\frac 16\mu^2([g])$.
By Theorem E, we have $W^{+}=0$ or $W^{-}=0$.
 By the classification theorem
of Hitchin, $(M^4, g)$ is isometric to either a quotient of $\mathbb{S}^4$ with the round metric or
$\mathbb{CP}^2$ with the Fubini-Study metric.
\end{proof}

\begin{corollary}
Let $(M^4, g)$  be a $4$-dimensional   compact  Riemannian manifold  with harmonic curvature and   positive  scalar curvature. If
\begin{equation}\int_M|W|^{2}+8\int_M|\mathring{Ric}|^2
\leq\frac{1}{3}\int_M R^2,\end{equation}
then 1)$M^4$ is isometric to a quotient of the round $\mathbb{S}^4$;

2) $M^4$ is a quotient of $\mathbb{S}^2\times \mathbb{S}^{2}$ with the product metric;

3) $M^4$ is a $\mathbb{CP}^2$ with the Fubini-Study metric.
\end{corollary}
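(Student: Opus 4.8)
The plan is to convert the scale-dependent pinching hypothesis into the conformally invariant pinching $\int_M|W|^2+4\int_M|\mathring{Ric}|^2\le\frac{1}{3}\mu^2([g])$, and then read off the conclusion from Theorem 1.8 and Corollary 1.9 according to whether this inequality is strict or an equality. The only nontrivial inputs are Gursky's lower bound (2.12), namely $\int_M R^2-12\int_M|\mathring{Ric}|^2\le\mu^2([g])$, together with the fact that harmonic curvature forces the scalar curvature to be constant.

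First I would use (2.12) in the divided form $\frac{1}{3}\int_M R^2-4\int_M|\mathring{Ric}|^2\le\frac{1}{3}\mu^2([g])$. Subtracting $4\int_M|\mathring{Ric}|^2$ from the hypothesis and inserting this estimate gives the chain
\begin{equation*}
\int_M|W|^2+4\int_M|\mathring{Ric}|^2-\frac{1}{3}\mu^2([g])\le\int_M|W|^2+8\int_M|\mathring{Ric}|^2-\frac{1}{3}\int_M R^2\le0,
\end{equation*}
where the right-hand inequality is exactly the hypothesis and the left-hand one is (2.12) divided by $3$, with equality in the latter precisely when $(M^4,g)$ is conformally Einstein. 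Hence $\int_M|W|^2+4\int_M|\mathring{Ric}|^2\le\frac{1}{3}\mu^2([g])$. Note that no appeal to the Chern-Gauss-Bonnet formula is needed here, since $\int_M|W|^2$ and $\mu^2([g])$ are already conformal invariants.

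If this inequality is strict, Theorem 1.8 applies verbatim (harmonic curvature and positive scalar curvature are in force) and yields that $(M^4,g)$ is a quotient of the round $\mathbb{S}^4$, which is case 1, or a $\mathbb{CP}^2$ with the Fubini-Study metric, which is case 3. If instead equality holds, then equality must hold in (2.12), so $g$ is conformally Einstein; since harmonic curvature implies that $g$ has constant scalar curvature, the proof of Obata's theorem forces $g$ itself to be Einstein. Then $\mathring{Ric}=0$ and the equality reads $\int_M|W|^2=\frac{1}{3}\mu^2([g])$, so Corollary 1.9 applies and returns exactly the three listed cases: a quotient of the round $\mathbb{S}^4$, a $\mathbb{CP}^2$ with the Fubini-Study metric, or a quotient of $\mathbb{S}^2\times\mathbb{S}^2$ with the product metric.

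I expect the delicate point to be the equality case, and in particular the need to upgrade \emph{conformally Einstein} to \emph{Einstein}. This is where the $\mathbb{S}^1\times\mathbb{S}^3$ examples that appear in Theorem 1.7 must be excluded: those metrics have constant scalar curvature but are not Einstein, so by Obata they cannot be conformally Einstein and therefore cannot realize equality in (2.12). Invoking Obata's rigidity, rather than applying Theorem 1.7 directly, is precisely what cleanly eliminates the two $\mathbb{S}^1\times\mathbb{S}^3$ possibilities and leaves only the three Einstein models produced by Corollary 1.9.
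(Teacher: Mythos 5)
Your proposal is correct and follows essentially the same route as the paper: both combine the hypothesis with Gursky's bound (2.12) to obtain $\int_M|W|^2+4\int_M|\mathring{Ric}|^2\leq\frac{1}{3}\mu^2([g])$, apply Theorem 1.8 in the strict case, and in the equality case use the constancy of the scalar curvature (from harmonic curvature) together with the Obata argument to upgrade conformally Einstein to Einstein. The only cosmetic difference is that you conclude the equality case via Corollary 1.9 where the paper cites Theorem 1.7 (implicitly discarding its non-Einstein $\mathbb{S}^1\times\mathbb{S}^3$ alternatives, which you instead exclude explicitly), and these amount to the same thing since Corollary 1.9 is itself a packaging of Theorems 1.7 and 1.8 for Einstein metrics.
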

\begin{remark}The pinching condition (4.3)  in Corollary 4.4 is equivalent to the following
\begin{equation*}\int_M|W|^{2}+\frac {1}{15}\int_M R^2
\leq\frac{128}{5}{\pi}^2\chi(M).\end{equation*}
\end{remark}

\begin{proof}[{\bf Proofs of corollary
4.4 and Remark 4.5}]From (2.12), we get
$$\int_M|W|^{2}+4\int_M|\mathring{Ric}|^2
-\frac{1}{3}{\mu^2([g])}\leq\int_M|W|^{2}+8\int_M|\mathring{Ric}|^2
-\frac{1}{3}\int_M R^2$$
Moreover, the  inequality is strict unless $(M^4, g)$ is conformally Einstein.

In the first case ``$<$'',
Theorem 1.8 immediately implies Corollary 4.4.

In the second case ``$=$'',  we have that $g$ is conformally Einstein and
$$\int_M|W|^{2}+4\int_M|\mathring{Ric}|^2
=\frac{1}{3}{\mu^2([g])}.$$ Since $g$ has constant scalar curvature, $g$ is Einstein from the proof of Obata Theorem. By Theorem 1.7, we complete the proof of this corollary.

By the Chern-Gauss-Bonnet formula,
the right-hand sides of the above can be
written as
$$\int_M|W|^{2}+8\int_M|\mathring{Ric}|^2
-\frac{1}{3}\int_M R^2=5\int_M|W|^{2}+\frac {1}{3}\int_M R^2
-128{\pi}^2\chi(M).$$
This proves Remark 4.5.
\end{proof}
\begin{theorem}
Let $(M^4, g)$  be a $4$-dimensional   compact Riemannian manifold  with harmonic curvature and   positive scalar curvature. If
 \begin{equation*}\frac{1}{6}\mu^2([g])\leq\int_M|W|^{2}\leq\frac{1}{3}\mu^2([g]),\end{equation*}
then
1) $(M^4, {g})$  is a $\mathbb{CP}^2$ with the Fubini-Study metric;

2) $(M^4, g)$ is isometric to a quotient of
$\mathbb{S}^2\times \mathbb{S}^{2}$ with the product metric.
\end{theorem}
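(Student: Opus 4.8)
The plan is to run the same dichotomy as in the proof of Theorem 3.5, but to exploit the extra rigidity provided by harmonic curvature in order to collapse the half--conformally--flat subcases. First I would record the reductions that make the earlier machinery applicable: harmonic curvature forces the scalar curvature $R$ to be a positive constant and yields $\delta W^{\pm}=0$, while positive scalar curvature guarantees $\mu([g])>0$, since any conformal class carrying a metric of positive scalar curvature has positive Yamabe constant. Thus Theorems E and 1.1 are available. Moreover $\int_M|W|^2=\int_M|W^{+}|^2+\int_M|W^{-}|^2\ge\frac16\mu^2([g])>0$, so the self--dual and anti--self--dual parts cannot both vanish.

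Next I split according to Theorem E, whose contrapositive says that $W^{\pm}\neq0$ forces $\int_M|W^{\pm}|^2\ge\frac16\mu^2([g])$. If exactly one of $W^{\pm}$ vanishes, say $W^{-}=0$, then $(M^4,g)$ is self--dual with harmonic curvature, so by the result of Bourguignon it is either conformally flat or Einstein. Conformal flatness would give $W=0$, contradicting $\int_M|W|^2\ge\frac16\mu^2([g])>0$; hence $g$ is Einstein. Being Einstein and half conformally flat, Hitchin's classification leaves only the round $\mathbb{S}^4$, which is excluded because it is conformally flat, or $\mathbb{CP}^2$ with the Fubini--Study metric. This yields conclusion 1, and one checks that $\int_{\mathbb{CP}^2}|W|^2=\frac16\mu^2([g])$ indeed lies in the admissible range.

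In the remaining case both $W^{+}\neq0$ and $W^{-}\neq0$, so $\int_M|W^{\pm}|^2\ge\frac16\mu^2([g])$ for each sign and therefore $\int_M|W|^2\ge\frac13\mu^2([g])$. Together with the hypothesis $\int_M|W|^2\le\frac13\mu^2([g])$ this forces $\int_M|W^{+}|^2=\int_M|W^{-}|^2=\frac16\mu^2([g])$. Applying Theorem 1.1 to each sign gives $\nabla W^{\pm}=0$ and shows $(M^4,g)$ is K\"{a}hler of positive constant scalar curvature; combined with the harmonic Weyl condition, Proposition 1 in \cite{D} implies the Ricci tensor is parallel, whence $\nabla Rm=0$ and $M$ is locally symmetric. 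Feeding the equality case of the Weitzenb\"{o}ck formula (2.4) into the maximum principle pins the eigenvalues of $W^{\pm}$ to $\{-\frac{R}{12},-\frac{R}{12},\frac{R}{6}\}$ and hence those of $Rm$ to $\{0,0,1,0,0,1\}$; the classification of four--dimensional symmetric spaces then identifies $(M^4,g)$ with a quotient of $\mathbb{S}^2\times\mathbb{S}^2$ with the product metric, which is conclusion 2.

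The step I expect to be the main obstacle is the half--conformally--flat case: the entire improvement over Theorem 3.5 hinges on correctly invoking Bourguignon's rigidity, that harmonic curvature together with $W^{\mp}=0$ implies conformally flat or Einstein, so as to eliminate the self--dual--but--not--anti--self--dual possibilities that survive under the weaker harmonic Weyl hypothesis, and on verifying that the resulting Einstein metric is genuinely $\mathbb{CP}^2$ rather than $\mathbb{S}^4$ by means of the strict positivity of the Weyl energy. The local--symmetry analysis in the generic case is comparatively routine once Theorem 1.1 is applied to both orientations.
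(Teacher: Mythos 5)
Your proposal is correct, and it reaches both conclusions by a partly different route than the paper. The paper's own proof first quotes Theorem 3.5 to reduce to the alternatives ``self-dual'', ``anti-self-dual'', or $\int_M|W^{\pm}|^2=\frac16\mu^2([g])$; your direct trichotomy via Theorem E, with the forced equalities $\int_M|W^{+}|^2=\int_M|W^{-}|^2=\frac16\mu^2([g])$ when both halves survive, reproduces that reduction, and your treatment of the equality case (Theorem 1.1 for both orientations, Proposition 1 of \cite{D} to get parallel Ricci, local symmetry, eigenvalue pinning via (2.4), then the classification of symmetric spaces yielding a quotient of $\mathbb{S}^2\times\mathbb{S}^2$) is essentially verbatim the corresponding step inside the paper's proof of Theorem 3.5. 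The genuine divergence is in the half-conformally-flat case: the paper handles the self-dual alternative via analyticity of harmonic-curvature metrics (DeTurck--Goldschmidt \cite{DG}) together with Proposition 7 of \cite{D} to conclude Einstein, and kills the anti-self-dual alternative by a contradiction through Theorem 4.3 of Micallef--Wang \cite{MW} combined with Corollary 1 of \cite{D}; you instead apply Bourguignon's theorem \cite{Bo2} --- legitimate here, since $W^{\mp}=0$ with $W^{\pm}\neq0$ forces $\sigma(M)\neq0$ through the signature formula (1.1), so harmonic curvature gives Einstein --- and then Hitchin, treating the two orientations symmetrically by a valid WLOG because the hypotheses and conclusions are orientation-independent. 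Your route is more economical (no analyticity, no Proposition 7, no Micallef--Wang, only a tool the paper itself uses in the proof of Theorem 1.7) and arguably more robust, since the paper's anti-self-dual contradiction rests on a delicate reading of the alternatives in \cite{MW}, whereas on your route that case simply lands in conclusion 1 after reversing orientation (as unoriented Riemannian manifolds, $\mathbb{CP}^2$ and $\overline{\mathbb{CP}^2}$ with the Fubini--Study metric coincide); what the paper's version buys in exchange is the finer statement that, with the orientation fixed, only the self-dual alternative can occur.
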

\begin{proof}By Theorem 3.5, We just need to consider that $(M^4, {g})$  is  self-dual or anti-self-dual.

When $(M^4, {g})$  is  self-dual. Since $(M^4, g)$ has harmonic curvature, $(M^4, g)$ is analytic \cite{DG}. By Proposition 7 in \cite{D}, we get that $(M^4, {g})$  is Einstein. By the classification theorem
of Hitchin, $(M^4, g)$ is isometric to a
$\mathbb{CP}^2$ with the Fubini-Study metric $g$.

When $(M^4, {g})$  is  anti-self-dual,  $\frac{R}{6}I-W^{+}=\frac{R}{6}I>0$. Since $(M^4, {g})$ is not self-dual, by Theorem 4.3 of \cite{MW}, only (c)and (d) in Theorem 4.3 of \cite{MW} occur, i.e., $(M^4, {g})$  is a K\"{a}hler manifold of positive constant scalar curvature. By Corollary 1 in \cite{D}, the scalar curvature of $(M^4, {g})$ is $0$. Contradiction.
\end{proof}

By Theorems E, 1.1 and 4.6, we have Theorem 1.11.

\section{Four manifolds with  positive Yamabe constant}
Chang, Gursky and Yang's proof of Theorem D is based on establishing the existence of a solution of a forth order fully nonlinear
equation. By avoiding the existence of a solution of a fourth order fully nonlinear equation, we can reprove Theorem D.
\begin{theorem}
Let $(M^4, g)$  be a $4$-dimensional   compact Riemannian manifold  with   positive Yamabe constant. If
 \begin{equation*}\int_M|W|^{2}\leq 16{\pi}^2\chi(M),\end{equation*} then  1) $\tilde{g}$ is a Yamabe minimizer and $(M^4, \tilde{g})$  is      the manifold which is isometrically covered by $\mathbb{S}^1\times \mathbb{S}^{3}$ with the product metric,
or  $\mathbb{S}^1\times \mathbb{S}^{3}$ with a rotationally symmetric Derdzi\'{n}ski metric;

2)  $M^4$ is diffeomorphic to the stand sphere $\mathbb{S}^4$ or  the real projective space $\mathbb{RP}^4$;

3)  $\tilde{g}$ is a Yamabe minimizer and $(M^4, \tilde{g})$  is a $\mathbb{CP}^2$ with the Fubini-Study metric.
\end{theorem}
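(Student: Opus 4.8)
The plan is to convert the conformally invariant Weyl bound in the hypothesis into the Weyl--Yamabe pinching $\int_M|W|^2\leq\frac16\mu^2([g])$ that is governed by Theorem 1.3, and then to sharpen the resulting coarse classification to the stated isometry types of the Yamabe minimizer. First I would feed the hypothesis $\int_M|W|^2\leq 16\pi^2\chi(M)$ into the Chern--Gauss--Bonnet identity $\int_M|W|^2-2\int_M|\mathring{Ric}|^2+\frac16\int_M R^2=32\pi^2\chi(M)$ recorded in Remark 2.3, which rewrites it in the equivalent curvature form $\int_M|W|^2+2\int_M|\mathring{Ric}|^2\leq\frac16\int_M R^2$. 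Combining this with Gursky's lower bound (2.12), namely $16\int_M\sigma_2(A)=\frac16\int_M R^2-2\int_M|\mathring{Ric}|^2\leq\frac16\mu^2([g])$ with equality only in the conformally Einstein case, I obtain the chain $\int_M|W|^2\leq 16\int_M\sigma_2(A)\leq\frac16\mu^2([g])$.

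With $\int_M|W|^2\leq\frac16\mu^2([g])$ in hand, I would apply Theorem 1.3 directly to extract the coarse conclusion: $M^4$ is diffeomorphic to $\mathbb{S}^4$, $\mathbb{RP}^4$, a quotient $\mathbb{S}^3\times\mathbb{R}/G$, or a connected sum of these, with the Yamabe minimizer being Fubini--Study $\mathbb{CP}^2$ in the borderline Einstein branch. The remaining task is to upgrade this diffeomorphism-type statement to the precise Yamabe-minimizer metrics in conclusions (1)--(3) and, crucially, to eliminate the nontrivial connected sums. Here I would exploit that the hypothesis gives strictly more than the bare inequality $\int_M|W|^2\leq\frac16\mu^2([g])$: it gives the sharp form $\int_M|W|^2+2\int_M|\mathring{Ric}|^2\leq\frac16\int_M R^2$.

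To run the refinement I would split along the conformally Einstein dichotomy built into (2.12). In the conformally Einstein branch I pass to the Einstein representative $\tilde g$; an Einstein metric has harmonic curvature, hence $\delta W^\pm_{\tilde g}=0$, so the half-harmonic-Weyl machinery is fully available. Theorem E (and Theorem 1.2 at equality) together with Hitchin's classification of half-conformally-flat Einstein four-manifolds then force $\tilde g$ to be a round sphere or Fubini--Study $\mathbb{CP}^2$, giving conclusions (2) and (3). In the strictly non-Einstein branch I pass instead to the Yamabe minimizer $\tilde g$, so that $R_{\tilde g}$ is constant and $\mu^2([g])=\int_M R_{\tilde g}^2$; once local conformal flatness is available, the sharp inequality collapses to the pure $L^2$-Ricci pinching $2\int_M|\mathring{Ric}_{\tilde g}|^2\leq\frac16\mu^2([g])$, and Theorems 1.13 and 1.15 of \cite{FX} then pin $\tilde g$ down as a round $\mathbb{S}^4$/$\mathbb{RP}^4$ or as a quotient of $\mathbb{S}^1\times\mathbb{S}^3$ with the product or a rotationally symmetric Derdzi\'{n}ski metric, yielding conclusions (1) and (2) and ruling out the remaining connected sums.

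The main obstacle is precisely the verification of local conformal flatness of $\tilde g$ in the non-Einstein branch \emph{without} the harmonic Weyl hypothesis. In Theorem 3.3 this was supplied for free by Theorem E, whose proof rests on the improved Kato inequality and the Weitzenb\"{o}ck identity (2.1), both of which require $\delta W^\pm=0$; absent that hypothesis the Bochner argument does not close, and one cannot pass from the integral pinching to a pointwise Weyl control by these means. This is exactly the point at which Chang--Gursky--Yang invoked the existence of a solution to a fourth-order fully nonlinear equation. The present route instead routes the integral-to-pointwise passage through the positive-isotropic-curvature classification of Chen--Tang--Zhu that underlies Theorem 1.3, and the delicate part of the argument is reconciling the geometric output of that classification with the conformal-flatness input needed for \cite{FX}; I expect this bridging step, rather than any of the curvature-identity manipulations, to carry the real weight of the proof.
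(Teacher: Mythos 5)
Your opening reduction --- rewriting the hypothesis via the Chern--Gauss--Bonnet formula as $\int_M|W|^2+2\int_M|\mathring{Ric}|^2\le\frac16\int_M R^2$ and combining with Gursky's estimate (2.12) to get $\int_M|W|^2\le16\int_M\sigma_2(A)\le\frac16\mu^2([g])$ --- is exactly the paper's first step, and your conformally Einstein branch (Einstein representative, Theorems E and 1.2, Hitchin) reproduces in substance the paper's equality case, which it dispatches with Corollary 3.1. But the strict, non-Einstein branch contains a genuine gap, in two respects. First, your plan there is to verify local conformal flatness of $\tilde g$ and then apply Theorems 1.13 and 1.15 of \cite{FX}; yet in the subcase that actually remains after $W\equiv0$ is disposed of one has $W\not\equiv0$, and conformal flatness is then not merely hard to verify without $\delta W^\pm=0$ --- it is false, since in dimension four it is literally the statement $W\equiv0$. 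Correspondingly the theorem claims only a diffeomorphism conclusion (item 2) in this subcase, and the paper's mechanism is entirely different from what you sketch: with the Yamabe minimizer $\tilde g$ (so $\mu^2([g])=\int_M R_{\tilde g}^2$), the strengthened inequality gives $\int_M|\mathring{Ric}_{\tilde g}|^2<\frac1{12}\mu^2([g])$, hence $b_1=0$ by Theorem F; $b_2=0$ forces $\sigma(M)=0$, so $\int_M|W^\pm|^2=\frac12\int_M|W|^2<\frac1{12}\mu^2([g])$; Freedman \cite{Fr} gives a covering by a homeomorphism sphere; Gursky's conformal deformation (Section 3 of \cite{G3}, as used in Section 2 of \cite{CZ}) converts the integral pinching into the pointwise bound $|W^\pm_{\tilde g}|^2<\frac1{12}R_{\tilde g}^2$; an eigenvalue estimate on the trace-free $W^\pm$ then yields positive isotropic curvature, and the Chen--Tang--Zhu classification \cite{CTZ} concludes. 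You explicitly defer precisely this integral-to-pointwise passage and the ``bridging step,'' which is to say you defer the entire substantive content of the case.

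Second, invoking Theorem 1.3 as a black box inverts the paper's logical order: Case 1 of the paper's proof of Theorem 1.3 cites ``the proof of Theorem 5.1'' for the positive-isotropic-curvature step, so Theorem 1.3 is downstream of the very argument you are trying to avoid, and your route is circular as the paper is organized. If one nevertheless grants Theorem 1.3, the correct way to eliminate its connected-sum and $\mathbb{S}^3\times\mathbb{R}/G$ alternatives is not conformal flatness but Betti-number counting: in the strict case with $W\neq0$ the vanishing $b_1=b_2=0$ obtained above forces $\chi(M)=2$, whereas $\mathbb{S}^3\times\mathbb{R}/G$ has $\chi=0$ and every nontrivial connected sum from the list has $\chi\le1$; this repair is available to you but is not made in the proposal. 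Finally, in your $W\equiv0$ cases (in particular $\chi(M)=0$, where the hypothesis forces $W\equiv0$), the appeal to \cite{FX} requires harmonic curvature; this does hold, but only because the Yamabe minimizer is conformally flat with constant scalar curvature, a point your argument needs to state since \cite{FX} is a harmonic-curvature result.
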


\begin{proof} i) When $\chi(M)=0$.

This pinching condition implies $W=0$.
Since $\int_M|W|^{2}$, ${\mu^2([g])}$ and $\int_M \sigma_2(A)$ are conformally invariant, there exists a conformally metric $\tilde{g}$ of $g$ such that ${\mu^2([g])}=\int_M R_{\tilde{g}}^2$, and $$2\int_M|\mathring{Ric}_{\tilde{g}}|^2
-\frac{1}{6}\mu^2([g])\leq0.$$ Since $\chi(M)=0$, by Theorems 1.13 and 1.15 in \cite{FX}, $(M^4, \tilde{g})$  is     a manifold which is isometrically covered by $\mathbb{S}^1\times \mathbb{S}^{3}$ with the product metric, or a manifold which is isometrically covered by $\mathbb{S}^1\times \mathbb{S}^{3}$ with a rotationally symmetric Derdzi\'{n}ski metric.

ii) When $\chi(M)\neq0$.

Case 1. In the first case ``$<$'', we have $$\int_M|W|^{2}
<\frac{1}{6}{\mu^2([g])}.$$ By Theorem E, $b_2(M)=0$.

a) $W=0$. Since $\chi(M)\neq0$, by Theorems 1.13 and 1.15 in \cite{FX}, $(M^4, \tilde{g})$  is the round $\mathbb{S}^4$,  the real projective space $\mathbb{RP}^4$.

b) $W\neq 0$. Since $\int_M|W|^{2}$, ${\mu^2([g])}$ and $\int_M \sigma_2(A)$ are conformally invariant, and $W\neq0$, there exists a conformally metric $\tilde{g}$ of $g$ such that ${\mu^2([g])}=\int_M R_{\tilde{g}}^2$, and $$2\int_M|\mathring{Ric}_{\tilde{g}}|^2
-\frac{1}{6}\mu^2([g])<0.$$
 By Theorem E, $b_1(M)=0$. Hence $M^4$ is a finite quotient of the homological sphere. By Freedman's result \cite{Fr}, $M^4$ is covered by a homeomorphism sphere. By the Hirzebruch signature formula,  $$\int_M|W^{+}|^{2}=\int_M|W^{-}|^{2}=\frac12\int_M|W|^{2}<\frac{1}{12}\mu^2([g])$$ Thus by Section 2 in \cite{CZ} and Section 3 in \cite{G3} , we get
$$|W_{\tilde{g}}^{+}|^{2}<\frac{1}{12}R_{\tilde{g}}^{2}, |W_{\tilde{g}}^{-}|^{2}<\frac{1}{12} R_{\tilde{g}}^2.
$$
Let $\lambda^\pm_1\geq\lambda\pm_2\geq\lambda\pm_3$ be the eigenvalues of $W^{\pm}$. Since $W^{\pm}$ is of trace free, we have $\lambda^\pm_1+\lambda^\pm_2+\lambda^\pm_3=0$, and
$$\frac32{\lambda^\pm_1}^2\leq{\lambda^\pm_1}^2+\frac12(\lambda^\pm_2+\lambda^\pm_3)^2={\lambda^\pm_1}^2+{\lambda^\pm_2}^2+{\lambda^\pm_3}^2
=\frac14|W^{\pm}|^{2}<\frac{1}{48}R^{2},$$
i.e., $\lambda^\pm_1<\frac{\sqrt{2}}{12}R$. Hence $\lambda^\pm_2+\lambda^\pm_3>-\frac{\sqrt{2}}{12}R$. This implies the sum of least two eigenvalue of $\frac{1}{12}R+W^{\pm}$ is positive. So $(M^4, \tilde{g})$ has positive isotropic curvature. Since $M^4$ is covered by a homeomorphism sphere, by the main theorem of \cite{CTZ},  $M^4$ is diffeomorphic to the stand sphere $\mathbb{S}^4$ or  the real projective space $\mathbb{RP}^4$;

Case 2. In the second case ``$=$',  we have
$$\int_M|W|^{2}
=\frac{1}{6}{\mu^2([g])}=16\pi^2\chi(M).$$Hence $g$ is conformal to a
Einstein metric $\tilde{g}$.
 By corollary 3.1, $(M^4, {g})$  is conformal to a $\mathbb{CP}^2$ with the Fubini-Study metric.
\end{proof}

\begin{remark}
The proof of Chang-Gursky-Yang consists of two steps. First, they prove
this case $``<"$; Second, based on the first step, they prove this case $``="$. We unify the
two cases.
Chen and Zhu \cite{CZ} prove a classification theorem of $4$-manifolds which generalizes Theorem C under the strict inequality assumption.
\end{remark}

Based on the first Weitzenb\"{o}ck formulas in Remark 2.2, using the same argument as in the proof of Theorem 1.1, we can obtain the following result of Gursky \cite{G3}.

\noindent
{{\bf Theorem G (Gursky \cite{G3}).} Let $(M^4, g)$ be a  $4$-dimensional   compact Riemannian manifold with
 positive Yamabe constant $\mu([g])$. If $b^\pm_2\neq0$ and
\begin{equation}\int_{M}|W_g^{\pm}|^{2}=\frac 16\mu^2([g]).\end{equation}
then $(M^4, g)$ is conformal to  a K\"{a}hler manifold of positive constant scalar curvature.
}
\begin{proof} Since  $b^\pm_2\neq0$, there exists a nonzero  $\omega^{\pm}\in H^2_{\pm}(M).$ Setting $u=|\omega^{\pm}|$. Based on the first Weitzenb\"{o}ck formulas in Remark 2.2, using the same argument as in the proof of Theorem 1.1, we get
\begin{equation}
\begin{split}
0\geq(2-\frac{1}{2\alpha})\frac16\mu([g])\left(\int_M  u^{4\alpha}\right)^{\frac{1}{2}}-\frac{\sqrt{6}}{3}\alpha\left(\int_{M}u^{4\alpha}\right)^{\frac{1}{2}}\left(\int_{M}|W^{\pm}|^{2}\right)^{\frac{1}{2}}\\
+\frac {(2\alpha-1)^2}{12\alpha}\int_M Ru^{2\alpha}.
\end{split}
\end{equation}
We choose $\alpha=\frac 12$, from (5.2) we get
\begin{equation}
\begin{split}
0\geq\left[\frac{1}{\sqrt{6}}\mu([g])-\left(\int_{M}|W^{\pm}|^{2}\right)^{\frac{1}{2}}\right]\left(\int_M  u^{2}\right)^{\frac{1}{2}}.
\end{split}
\end{equation}
(5.1) implies that the equality holds in (5.3).
When the equality holds in (5.3), all inequalities leading to (5.2)
become equalities. From (5.2), the function $u^\alpha$ attains the infimum in the Yamabe functional. Hence the metric $\tilde{g}=u^{2\alpha}$ is a Yamabe minimizer. Then we get $|\omega^{\pm}|_{\tilde{g}}=1$. Since $\int_{M}|W^{\pm}|^{2}$ is conformally invariant,  the equality for the H\"{o}lder inequality implies that  $|W^{\pm}|_{\tilde{g}}$ is constant. From (5.1), we get $|W^{\pm}|_{\tilde{g}}^{2}=\frac 16 R_{\tilde{g}}^2$.
By the first  Weitzenb\"{o}ck formula and the maximum principle, we get $|\omega|$ is constant, thus $\nabla \omega=0$, i.e., $(M^4, \tilde{g})$ is a K\"{a}hler manifold of positive constant scalar curvature. Hence $(M^4, g)$ is conformal to  a K\"{a}hler manifold of positive constant scalar curvature.
\end{proof}

Based on Theorems F and G, using the same arguments as in the proof of Theorem B*, we can reprove Theorem A which is rewritten as follows:

\noindent
{ {\bf Theorem A*.}
Let $(M^4, g)$ be a  $4$-dimensional   compact Riemannian manifold with
 positive Yamabe constant $\mu([g])$ and $H^2_{+}(M)\neq0$.  Then
\begin{equation*}\int_{M}|W_g^{+}|^{2}\geq16\int_M \sigma_2(A).\end{equation*}
 Furthermore, equality holds in the above inequality  if and only if $g$ is conformal to a positive K\"{a}hler-Einstein metric.
}

Based on Theorems F,  we can reprove Theorem C which is rewritten as follows:

\noindent
{ {\bf Theorem C*.} Let $(M^4, g)$  be a $4$-dimensional   compact Riemannian manifold  with  positive Yamabe constant  and the space of harmonic $1$-forms $H^1(M^4)\neq0$.
Then
 \begin{equation*} \int_M|W^{+}|^{2}=8{\pi}^2(2\chi(M^4)+3\sigma(M^4))-8\int_M \sigma_2(A)\geq 8{\pi}^2(2\chi(M^4)+3\sigma(M^4)).\end{equation*}
 Furthermore, the equalities holds in the above inequalities if and only if
$(M^4, g)$ is conformal to a quotient of $\mathbb{R}^1\times \mathbb{S}^{3}$ with the product metric.}
\begin{proof}
By Theorem F, $\int_M|\mathring{Ric}|^2\geq\frac{1}{12}\mu^2([g])$ for $H^1(M^4)\neq0$. Since $M^4$ is compact, there exists a conformally metric $\tilde{g}$ of $g$ such that $\mu^2([g])=\int_M R_{\tilde{g}}^2$. Hence we get
$$-2\int_M|\mathring{Ric}_{\tilde{g}}|^2+\frac 16\mu^2([g])=-2\int_M|\mathring{Ric}_{\tilde{g}}|^2+\frac{1}{6}\int_M R_{\tilde{g}}^2=-2\int_M|\mathring{Ric}|^2+\frac{1}{6}\int_M R^2\leq0,$$
i.e., $$16\int_M \sigma_2(A)\leq0.$$
By the Chern-Gauss-Bonnet formula,
$$\int_M|W^{+}|^{2}= 8{\pi}^2(2\chi(M^4)+3\sigma(M^4))-8\int_M \sigma_2(A).$$
Hence  \begin{equation*} \int_M|W^{+}|^{2}\geq 8{\pi}^2(2\chi(M^4)+3\sigma(M^4)).\end{equation*}
From the proof of Theorem F and the above, the equalities hold in the above inequalities imply that $|\nabla|\omega||^2=\frac34|\nabla\omega|^2$ and
$\int_M|\mathring{Ric}|^2=\frac{1}{12}\mu^2([g])=\frac{1}{12}\int_M R^2$. By Proposition 5.1 and subsection 7.2 in \cite{BC}, $(M^4, g)$ is conformal to a quotient of $\mathbb{R}^1\times \mathbb{S}^{3}$ with the product metric.
\end{proof}

\begin{proof}[{\bf Proof of Theorem
1.3}]
Case 1. $\int_M|W^{\pm}|<\frac{1}{6}{\mu^2([g])}$. By Theorem E, we get $b^{\pm}_2=0$. From the proof of Theorem 5.1, we get $(M^4, {g})$ has positive isotropic curvature. According to the main theorem in \cite{CTZ}, it is
diffeomorphic to $\mathbb{S}^4,$ $\mathbb{RP}^4$, $\mathbb{S}^3\times \mathbb{R}/G$ or a connected sum of them. Here
$G$ is a cocompact fixed point free discrete subgroup of the isometry group
of the standard metric on $\mathbb{S}^3\times \mathbb{R}$.

Case 2.  $W^{+}=0, \int_M|W^{-}|^2=\frac{1}{6}{\mu^2([g])}$, or $ W^{-}=0, \int_M|W^{+}|^2=\frac{1}{6}{\mu^2([g])}$. By the Hirzebruch signature formula
\begin{equation*}\frac{1}{48\pi^2}\int_{M}(|W_g^{+}|^{2}-|W_g^{-}|^{2})=b_2^{+}-b_2^{-}=\sigma(M),\end{equation*} we get $b_2^{-}\neq0$ or $b_2^{+}\neq0$.
By Theorem F,  $(M^4, {g})$ is conformal to a K\"{a}hler manifold of positive constant scalar curvature.

 When $W^{+}=0$, by Corollary 1 in \cite{D}, the scalar curvature of $(M^4, \tilde{g})$ is $0$, and $\mu([g])=0$. Contradiction.

  When $W^{-}=0$,  by Lemma 7 in \cite{D}, $(M^4, \tilde{g})$ is locally symmetric. By the result of Bourguignon, $(M^4, \tilde{g})$  is Einstein.
  Then $\tilde{g}$ is both Einstein and half conformally flat. By the classification theorem
of Hitchin, $(M^4, \tilde{g})$ is isometric to either a quotient of $\mathbb{S}^4$ with the round metric or
$\mathbb{CP}^2$ with the Fubini-Study metric. Since we are assuming that is not locally conformal flat, $(M^4, \tilde{g})$ is $\mathbb{CP}^2$ with the Fubini-Study metric.
\end{proof}
\begin{remark}
Although the conditions Theorem 1.6 of \cite{CZ} is weaker than the ones in Theorem 1.3, I do not know that Theorem 1.3 can  be deduced directly from Theorem 1.6 of \cite{CZ} as far as my knowledge is concerned. Chen and Zhu proved Theorem 1.6 of \cite{CZ} by using Micallef-Wang's result \cite{MW} which I do not use in the proof of Theorem 1.3. From the proof of Theorem 5.1, Theorem D can be deduced from Theorem 1.3.
\end{remark}

\begin{proof}[{\bf Proof of Theorem
1.6}] From the proof of Theorem 1.4, we have $b_1=0$ and  $2\leq\chi(M)\leq7$.

Case 1. In the first case ``$<$'', we have $$\int_M|W|^{2}
<\frac{1}{3}{\mu^2([g])}.$$

When $\int_M|W^{\pm}|^2<\frac{1}{6}{\mu^2([g])}$,  by Theorem E, $b^{+}_2(M)=b^{-}_2(M)=0$. Hence $M^4$ is covered by a homeomorphism sphere.
From the proof of Theorem 5.1, $(M^4, g)$ has positive isotropic curvature. By the main theorem of \cite{CTZ},  $M^4$ is diffeomorphic to the stand sphere $\mathbb{S}^4$ or  the real projective space $\mathbb{RP}^4$;

When $\int_M|W^{+}|^2<\frac{1}{6}{\mu^2([g])}$ and $\int_M|W^{-}|^2\geq\frac{1}{6}{\mu^2([g])}$, or $\int_M|W^{-}|^2<\frac{1}{6}{\mu^2([g])}$ and $\int_M|W^{+}|^2\geq\frac{1}{6}{\mu^2([g])}$. By Theorem E and the Hirzebruch signature formula, $b_2(M)=b^{-}_2(M)\neq0$ or $b_2(M)=b^{+}_2(M)\neq0$. Hence $3\leq\chi(M)=2+b_2\leq7$. From the proof of Theorem 1.4, we have $b^{\pm}_2=1$ and  $\chi(M)=3$. If $(M^4, g)$ has harmonic Weyl tensor, by Theorem 1.4 we have $\int_M|W|^{2}
=\frac{1}{6}{\mu^2([g])}$, which contradicts $\int_M|W|^{2}
>\frac{1}{6}{\mu^2([g])}$.

Case 2. In the second case ``$=$',  we have
$$\int_M|W|^{2}
=\frac{1}{3}{\mu^2([g])}=\frac{64}{3}{\pi}^2\chi(M).$$Hence $g$ is conformal to a
Einstein metric $\tilde{g}$.   By corollary 1.9, $(M^4, {g})$  is conformal to a quotient of $\mathbb{S}^2\times \mathbb{S}^{2}$ with the product metric.
\end{proof}

\begin{theorem}
Let $(M^4, g)$  be a $4$-dimensional   compact Riemannian manifold  with   positive Yamabe constant. If
 \begin{equation*}\frac{1}{6}\mu^2([g])\leq\int_M|W|^{2}\leq\frac{1}{3}\mu^2([g]),\end{equation*}
 and the universal cover of $(M^4, {g})$ is not diffeomorphic to a quotient of $\mathbb{S}^4$.

Then
 1) the universal cover of $(M^4, {g})$   has either  $\chi(M^4)=4$ and $b_2=b_2^{+}=2$ or $\chi(M^4)=3$ and $b_2=b_2^{+}=1$;

2) the universal cover of $(M^4, {g})$   has either   $\chi(M^4)=4$ and $b_2=b_2^{-}=2$ or  $\chi(M^4)=3$ and $b_2=b_2^{-}=1$;

3) the universal cover of $(M^4, {g})$  is a K\"{a}hler manifold of positive constant scalar curvature. In particular, $(M^4, g)$ is a quotient of  $(\Sigma_1, g_1)\times (\Sigma_2, g_2)$, where the surface $(\Sigma_i, g_i)$ has constant Gaussian curvature $k_i$, and $k_1+k_2>0$.
\end{theorem}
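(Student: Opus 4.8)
The plan is to treat this statement as the analogue of Theorem 3.5 with the harmonic Weyl hypothesis dropped, so that the work done there by Theorem E (which needs $\delta W^{\pm}=0$) must be redistributed among tools that do not require it: Theorem F, Theorem G, the pointwise isotropic-curvature estimate from the proof of Theorem 5.1 together with the classification of Chen-Tang-Zhu \cite{CTZ}, and Theorem 4.3 of Micallef-Wang \cite{MW}. Because $\int_M|W|^2$, $\mu^2([g])$ and $\int_M\sigma_2(A)$ are conformal invariants, I would first pass to a Yamabe minimizer $\tilde g\in[g]$ with $\mu^2([g])=\int_M R_{\tilde g}^2$ and $R_{\tilde g}$ a positive constant, and set $a=\int_M|W^{+}|^2$ and $b=\int_M|W^{-}|^2$, so that the hypothesis reads $\frac16\mu^2([g])\le a+b\le\frac13\mu^2([g])$. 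The argument is then organized entirely around the Betti numbers $b_2^{\pm}$, using Theorem F in the contrapositive form $b_2^{\pm}\ne0\Rightarrow\int_M|W^{\pm}|^2\ge\frac16\mu^2([g])$.

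The first step is to eliminate the case $b_2^{+}=b_2^{-}=0$. There $M$ has trivial rational second cohomology, and I would show (this being one of the delicate points) that the pinching makes $\tilde g$ a metric of nonnegative isotropic curvature, so that \cite{CTZ} and Theorem 4.3 of \cite{MW} force the universal cover to be a quotient of $\mathbb{S}^4$, contrary to hypothesis. Hence at least one of $b_2^{\pm}$ is nonzero; say $b_2^{+}\ge1$, so that $a\ge\frac16\mu^2([g])$ and therefore $b\le\frac13\mu^2([g])-a\le\frac16\mu^2([g])$.

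Now the case split is clean. If $b_2^{-}=0$, then the Hirzebruch signature formula gives $48\pi^2 b_2^{+}=48\pi^2\sigma(M)=a-b$, while the upper bound $a+b\le\frac13\mu^2([g])\le\frac13\mu^2(\mathbb{S}^4)$ yields $b_2^{+}\le2$; passing to the universal cover $\widetilde M$, where $b_1=0$ and hence $\chi=2+b_2^{+}$, gives the two alternatives $\chi=4,\ b_2=b_2^{+}=2$ and $\chi=3,\ b_2=b_2^{+}=1$ of conclusion (1), and by the symmetry $+\leftrightarrow-$ the situation $b_2^{+}=0<b_2^{-}$ gives conclusion (2). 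If instead $b_2^{-}\ge1$, then $b\ge\frac16\mu^2([g])$ as well, so $a=b=\frac16\mu^2([g])$; I would then apply Theorem G to each orientation, obtaining that $\tilde g$ is K\"ahler of positive constant scalar curvature for both orientations with $W^{+}$ and $W^{-}$ parallel. Following the proofs of Theorems 1.2 and 3.5 — via Derdzi\'nski's Proposition 5 and Lemma 7 of \cite{D} and Bourguignon's theorem \cite{Bo2} — I would upgrade this to $\nabla Rm=0$, so that the universal cover is a simply connected four-dimensional K\"ahler symmetric space of positive constant scalar curvature; since $W^{\pm}$ are both nonzero the self-dual $\mathbb{CP}^2$ is excluded and $\widetilde M$ splits as a product, whence $M$ is a quotient of $(\Sigma_1,g_1)\times(\Sigma_2,g_2)$ with $k_1+k_2>0$, which is conclusion (3).

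The essential difficulty is the loss, in the absence of $\delta W^{\pm}=0$, of the statement that a sub-critical Weyl half vanishes; it can only be controlled topologically, through Theorem F and the signature formula. The two points I expect to be hardest are (i) the degenerate regime $b_2^{+}=b_2^{-}=0$, where one must convert the integral pinching on $\tilde g$ into a pointwise nonnegative-isotropic-curvature inequality and then match the resulting sphere classification of \cite{CTZ} and \cite{MW} against the hypothesis that $\widetilde M$ is not a quotient of $\mathbb{S}^4$; and (ii) the upgrade of the ``conformal to K\"ahler'' output of Theorem G to a genuinely locally symmetric, hence product, structure on the universal cover, which is what makes conclusion (3) precise.
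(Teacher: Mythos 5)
Your decomposition by Betti numbers is essentially a contrapositive rearrangement of the paper's decomposition by the integrals $\int_M|W^{\pm}|^2$ versus $\tfrac16\mu^2([g])$, and your Case B agrees with the paper: there too, Theorem F gives $b_2^{\mp}=0$, the Hirzebruch signature formula gives $b_2^{\pm}=\pm\sigma(M)\geq1$, and the bound $\mu^2([g])\leq\mu^2(\mathbb{S}^4)=384\pi^2$ gives $b_2^{\pm}\leq2$, yielding conclusions 1) and 2). But your handling of the remaining cases has a genuine gap, and it is located exactly where you predicted difficulty. Your Case A ($b_2^{+}=b_2^{-}=0$) cannot be eliminated: there $\sigma(M)=0$, so $\int_M|W^{+}|^2=\int_M|W^{-}|^2=\tfrac12\int_M|W|^2\leq\tfrac16\mu^2([g])$, and at the endpoint $\tfrac16\mu^2([g])$ the Gursky--Chen--Zhu conformal deformation only produces \emph{weakly} nonnegative isotropic curvature; \cite{CTZ} requires strict positivity, and the Micallef--Wang classification of nonnegative isotropic curvature explicitly contains a K\"ahler alternative. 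A concrete counterexample to your claimed contradiction is the quotient of $\mathbb{S}^2\times\mathbb{S}^{2}$ (product metric) by the free involution $(x,y)\mapsto(-x,-y)$: it is orientable, the involution acts by $-1$ on $H^2$, so $b_2=b_2^{\pm}=0$, while $\int_M|W|^2=\tfrac13\mu^2([g])$ persists under the quotient (both sides halve), and its universal cover is $\mathbb{S}^2\times\mathbb{S}^{2}$, not $\mathbb{S}^4$. So this case must be routed into conclusion 3), not to a contradiction. This is precisely how the paper proceeds: its third case is the equality case $\int_M|W^{+}|^2=\int_M|W^{-}|^2=\tfrac16\mu^2([g])$ \emph{irrespective of Betti numbers}, handled by Theorem 4.10 of \cite{MW}, whose alternative (a) (positive isotropic curvature, $b_2=0$) is excluded by the hypothesis on the universal cover via the proof of Case 1 of Theorem 1.3, and whose alternative (c) directly gives the K\"ahler universal cover splitting as $(\Sigma_1,g_1)\times(\Sigma_2,g_2)$ with $k_1+k_2>0$.

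Your Case C has a second, related gap. Theorem G, applied to each orientation, produces for each a conformal K\"ahler metric, but the two conformal factors are a priori different, so you do not obtain one metric K\"ahler for both orientations, which is what a splitting argument would need. Moreover the proposed upgrade to $\nabla Rm=0$ does not go through: Proposition 5 and Lemma 7 of \cite{D} are used in the paper only under $\delta W^{\pm}=0$ (Theorems 1.1, 1.2, 3.5), which is unavailable here, and Bourguignon's theorem \cite{Bo2} requires \emph{nonzero} signature, whereas in your Case C the equality $\int_M|W^{+}|^2=\int_M|W^{-}|^2$ forces $\sigma(M)=0$. The paper avoids both problems by never invoking Theorem G in this proof and deriving conclusion 3) from the Micallef--Wang classification alone. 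To repair your argument, merge your Cases A and C into the single equality case and replace the Derdzi\'nski--Bourguignon upgrade by the appeal to Theorem 4.10 of \cite{MW}.
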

\begin{proof}
When $\int_{M}|W^{+}|^2<\frac{1}{6}\mu^2([g])$ and $\int_{M}|W^{-}|^2<\frac{1}{6}\mu^2([g])$. By Theorem E, we have $b_2=0$. By the proof of Case 1 in Theorem 1.3, the universal cover of $(M^4, {g})$ is diffeomorphic to a quotient of $\mathbb{S}^4$.

When $\int_{M}|W^{\mp}|^2<\frac{1}{6}\mu^2([g])$ and $\int_{M}|W^{\pm}|^2\geq\frac{1}{6}\mu^2([g])$. By Theorem E, we have $b_2^{\mp}=0$.  By the Hirzebruch signature formula
\begin{equation}\frac{1}{48\pi^2}\int_{M}(|W_g^{+}|^{2}-|W_g^{-}|^{2})=b_2^{+}-b_2^{-}=\pm b_2^{\pm}=\sigma(M),\end{equation} we get  $\pm\sigma(M)=b_2^{\pm}\geq1$. Since $\int_M|W|^{2}\leq\frac{1}{3}\mu^2([g])$, by this fact that $\mu^2([g])\leq\mu^2(\mathbb{S}^4)=384\pi^2$ and the  inequality is strict unless $(M^4, g)$ is conformal to $\mathbb{S}^4$, we get $b_2^{\pm}\leq2$. Then we get $\chi(M)\leq 4$.

When $\int_{M}|W^{+}|^2=\int_{M}|W^{-}|^2=\frac{1}{6}\mu^2([g])$. Then we get that $\chi(M)$ is even. Only (a) and (c) in  Theorem 4.10 of \cite{MW}. Moreover, if (a) in  Theorem 4.10 of \cite{MW} occurs, $(M^4, g)$ becomes positive isotropic curvature and $b_2=0$.
By the proof of Case 1 in Theorem 1.3, the universal cover of $(M^4, {g})$ is diffeomorphic to a quotient of $\mathbb{S}^4$.
if (c) in  Theorem 4.10 of \cite{MW} occurs,
the universal cover of $(M^4, {g})$  is diffeomorphic to  a K\"{a}hler manifold of positive constant scalar curvature. Since the scalar curvature is of positive,  the universal cover of $(M^4, {g})$  is diffeomorphic to  $(\Sigma_1, g_1)\times (\Sigma_2, g_2)$, where $(\Sigma_i, g_i)$ is a two-dimensional manifold, the Gaussian curvature $k_i$
of $g_i$ must be a constant and satisfies $k_1+k_2>0$.
\end{proof}

From the proof of Theorems 1.7, 1.8, 4.1 and 5.1, we obtain
\begin{theorem}
 Let $(M^4, g)$  be a $4$-dimensional   compact Riemannian manifold  with  positive Yamabe constant. If
 \begin{equation*}\int_M|W|^{2}+2\int_M|\mathring{Ric}|^2\leq\frac16\mu^2([g]),\end{equation*} then
  1) $(M^4, g)$  is a $\mathbb{CP}^2$ with the Fubini-Study metric;

2) $(M^4, g)$  is conformal to  a quotient of the round $\mathbb{S}^4$, or   a quotient of $\mathbb{R}^1\times \mathbb{S}^{3}$ with the product metric;

3) $M^4$  is diffeomorphic to a quotient of the round $\mathbb{S}^4$ which has not harmonic Weyl tensor.
\end{theorem}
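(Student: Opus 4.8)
The plan is to recast the hypothesis as an integral pinching on the two halves of the Weyl tensor and then run the case analysis from the proofs of Theorems 5.1, 4.1 and 1.8. Since $\int_M|W|^{2}=\int_M|W^{+}|^{2}+\int_M|W^{-}|^{2}$ and $|\mathring{Ric}|^{2}\ge 0$, the hypothesis gives
$$\int_M|W^{\pm}|^{2}\le\int_M|W|^{2}+2\int_M|\mathring{Ric}|^{2}\le\frac16\mu^2([g]),$$
which is exactly the range governed by Theorems E and F. I would therefore split according to whether one of these two inequalities is saturated.

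Suppose first that $\int_M|W^{+}|^{2}=\frac16\mu^2([g])$ (the case of $W^{-}$ being symmetric). Then every inequality in the last display must be an equality, so $\int_M|W^{-}|^{2}=0$ and $\int_M|\mathring{Ric}|^{2}=0$; hence $g$ is itself Einstein with $W^{-}=0$, and $W^{+}\not\equiv0$ because $\mu([g])>0$. Thus $g$ is an Einstein, half conformally flat metric of positive scalar curvature, and Hitchin's classification (invoked exactly as in the proofs of Theorems 1.2 and 1.8) forces $(M^4,g)$ to be $\mathbb{CP}^2$ with the Fubini--Study metric. This is conclusion (1).

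Now suppose both inequalities are strict, $\int_M|W^{\pm}|^{2}<\frac16\mu^2([g])$. Theorem F gives $b_2^{+}=b_2^{-}=0$, hence $b_2(M)=0$ and $\sigma(M)=0$. Because $\mu^2([g])\le\int_M R^2$ always holds for positive Yamabe constant, the hypothesis yields the conformally invariant inequality $\int_M|W|^{2}\le16\int_M\sigma_2(A)$; passing to the Yamabe minimizer $\tilde g$ (so that $\mu^2([g])=\int_M R_{\tilde g}^2$) this reads $\int_M|W_{\tilde g}|^{2}+2\int_M|\mathring{Ric}_{\tilde g}|^{2}\le\frac16\mu^2([g])$. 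If $W\equiv0$ the metric is locally conformally flat and the bound becomes $2\int_M|\mathring{Ric}_{\tilde g}|^{2}\le\frac16\mu^2([g])$; Theorems 1.13 and 1.15 of \cite{FX} then identify the conformal class of $g$ with a quotient of the round $\mathbb{S}^4$ or of the product $\mathbb{R}^1\times\mathbb{S}^3$, which is conclusion (2).

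If instead $W\not\equiv0$ I would follow Theorem 5.1(ii)(b): here $b_2=0$ and, since $\int_M|\mathring{Ric}_{\tilde g}|^{2}\le\frac{1}{12}\mu^2([g])-\frac12\int_M|W_{\tilde g}|^{2}<\frac{1}{12}\mu^2([g])$, Theorem F also gives $b_1=0$, so the universal cover is a homology sphere. Moreover $\int_M|W^{+}|^{2}=\int_M|W^{-}|^{2}=\frac12\int_M|W|^{2}<\frac{1}{12}\mu^2([g])$, the strictness coming from the fact that equality would force $g$ Einstein and, by Corollary 3.1, $M=\mathbb{CP}^2$, contradicting $b_2=0$. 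On $\tilde g$ this integral bound upgrades to the pointwise bound $|W_{\tilde g}^{\pm}|^{2}<\frac{1}{12}R_{\tilde g}^{2}$, which makes the sum of the two smallest eigenvalues of $\frac{1}{12}R+W^{\pm}$ positive, so $\tilde g$ has positive isotropic curvature; Freedman's theorem \cite{Fr} and the classification of Chen--Tang--Zhu \cite{CTZ} then show $M$ is diffeomorphic to a quotient of the round $\mathbb{S}^4$. Finally $g$ cannot have harmonic Weyl tensor: if $\delta W=0$ the Cotton tensor vanishes, hence $\delta W^{\pm}=0$, and the strict bounds $\int_M|W^{\pm}|^{2}<\frac16\mu^2([g])$ would force $W^{\pm}=0$ by Theorem E, contradicting $W\not\equiv0$; this gives conclusion (3). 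I expect the genuine obstacle to be the passage from the conformally invariant integral bound to the \emph{pointwise} estimate $|W_{\tilde g}^{\pm}|^{2}<\frac{1}{12}R_{\tilde g}^{2}$ needed for positive isotropic curvature, which is precisely where one must import the arguments of \cite{CZ} and \cite{G3}.
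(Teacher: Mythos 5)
Your proof is correct and takes essentially the same route as the paper, whose own ``proof'' of this statement is precisely the assembly you reconstruct from the proofs of Theorems 1.7, 1.8, 4.1 and 5.1: the saturated case $\int_M|W^{\pm}|^2=\tfrac16\mu^2([g])$ forces $g$ Einstein and anti-self-dual/self-dual, giving $\mathbb{CP}^2$ by Hitchin (conclusion 1); the locally conformally flat case passes to the Yamabe minimizer and invokes Theorems 1.13 and 1.15 of \cite{FX} (conclusion 2); and the remaining case uses Theorems E and F, positive isotropic curvature, Freedman \cite{Fr} and Chen--Tang--Zhu \cite{CTZ}, with the harmonic-Weyl exclusion via Theorem E exactly as you argue (conclusion 3). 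Even the step you single out as the crux --- upgrading the conformally invariant integral bound to the pointwise estimate $|W^{\pm}_{\tilde g}|^2<\tfrac{1}{12}R_{\tilde g}^2$ --- is handled in the paper just as you propose, by importing Section 2 of \cite{CZ} and Section 3 of \cite{G3}.
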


\begin{theorem}
 Let $(M^4, g)$  be a $4$-dimensional   compact Riemannian manifold which is not diffeomorphic to $\mathbb{S}^4$ or $\mathbb{RP}^4$  with  positive Yamabe constant.  If
 \begin{equation*}\frac16\mu^2([g])<\int_M|W|^{2}+4\int_M|\mathring{Ric}|^2\leq\frac13\mu^2([g]),\end{equation*} then
 1) $(M^4, g)$  is a quotient of  $\mathbb{S}^2\times \mathbb{S}^{2}$ with the product metric;

2) $(M^4, g)$  has $\chi(M)=3$,  $b_1=0$ and $b_2=1$, and has not harmonic Weyl tensor.
\end{theorem}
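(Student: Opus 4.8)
The plan is to imitate the proofs of Theorems 1.6 and 3.5: convert the stated pinching into a two-sided control of the conformal invariant $\int_M|W|^2$, and then run the dichotomy furnished by Theorem E together with the Hirzebruch signature formula and the classification results of Hitchin, Micallef--Wang \cite{MW} and Chen--Tang--Zhu \cite{CTZ}. The upper bound is easy to exploit: since $\int_M|\mathring{Ric}|^2\geq0$ it gives at once $\int_M|W|^2\leq\frac13\mu^2([g])$. Because $\int_M|W|^2$, $\mu^2([g])$ and $\int_M\sigma_2(A)$ are conformal invariants, I would pass to a Yamabe minimizer $\tilde g\in[g]$ with $\mu^2([g])=\int_M R_{\tilde g}^2$ and combine Gursky's inequality (2.12) with the Chern--Gauss--Bonnet formula exactly as in the proof of Theorem 1.4; this records $b_1(M)=0$ via Theorem F (so $\chi(M)=2+b_2(M)$) and the a priori bound $\mu^2([g])\leq\mu^2(\mathbb{S}^4)=384\pi^2$, bounding $\chi(M)$.

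Next I would split according to the sizes of $\int_M|W^{+}|^2$ and $\int_M|W^{-}|^2$ relative to $\frac16\mu^2([g])$, as in Theorem E. If both are strictly smaller then $W^{+}=W^{-}=0$, so $(M,\tilde g)$ is conformally flat; with $b_1=b_2=0$ it is a homology sphere, it has positive isotropic curvature as in the proof of Theorem 5.1, and Chen--Tang--Zhu then forces $M\cong\mathbb{S}^4$ or $\mathbb{RP}^4$, contrary to hypothesis. Hence at least one chirality is large; in the one-sided case, say $W^{-}=0$ and $\int_M|W^{+}|^2\geq\frac16\mu^2([g])$, the signature formula with $b_2^{-}=0$ gives $b_2^{+}=\sigma(M)\geq1$, while $\int_M|W|^2\leq\frac13\mu^2([g])$ and $\mu^2([g])\leq384\pi^2$ give $b_2^{+}\leq2$.

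In this one-sided case the strict lower bound does the decisive work. If $g$ were Einstein it would be self-dual Einstein, hence $\mathbb{CP}^2$ by Hitchin, and then $\int_M|W|^2+4\int_M|\mathring{Ric}|^2=\frac16\mu^2([g])$, contradicting the strict lower bound; so $g$ is not Einstein and the strict inequality $\int_M|W|^2<\frac{64}{3}\pi^2\chi(M)$ holds. The signature bookkeeping of Theorem 1.4, namely $b_2^{+}=\frac{1}{48\pi^2}\int_M|W^{+}|^2<\frac49\chi(M)=\frac49(2+b_2^{+})$, then pins down $b_2^{+}=1$, $\chi(M)=3$, $b_1=0$. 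Were $M$ to carry a harmonic Weyl tensor, Theorem 1.4 would give $\int_M|W|^2=\frac16\mu^2([g])$ and identify $M$ with $\mathbb{CP}^2$, again contradicting the strict lower bound; hence $M$ has no harmonic Weyl tensor, which is conclusion 2).

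Finally, in the balanced case $\int_M|W^{+}|^2=\int_M|W^{-}|^2=\frac16\mu^2([g])$ we have $\int_M|W|^2=\frac13\mu^2([g])$, so the upper bound forces $\int_M|\mathring{Ric}|^2=0$ and $g$ is Einstein; then $\delta W^{\pm}=0$ and Theorem 1.1 applies to each chirality, making $g$ Kähler of constant positive scalar curvature with $\nabla W^{\pm}=0$ and $W^{\pm}$ of eigenvalue type $\{-\frac{R}{12},-\frac{R}{12},\frac{R}{6}\}$. Since $g$ is Einstein the Ricci tensor is parallel, so $\nabla Rm=0$ and, as in the proof of Theorem 3.5, the locally symmetric space with $Rm$ of type $\{0,0,1,0,0,1\}$ is a quotient of $\mathbb{S}^2\times\mathbb{S}^2$ with the product metric, giving conclusion 1). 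I expect the genuinely delicate point to be the reduction in the conformally flat regime: since $\int_M|\mathring{Ric}|^2$ is not a conformal invariant, transferring the pinching to $\tilde g$ and securing $b_1=0$ there is exactly where the strict lower bound, the exclusion of $\mathbb{S}^4$ and $\mathbb{RP}^4$, and the classification inputs of \cite{FX} and \cite{CTZ} must all be used.
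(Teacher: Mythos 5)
Your overall architecture --- the trichotomy on $\int_M|W^{\pm}|^2$ versus $\tfrac16\mu^2([g])$, positive isotropic curvature plus \cite{CTZ} in the ``both small'' case, signature bookkeeping in the one-sided case, and Theorem 1.1 plus the symmetric-space argument at the end of Theorem 3.5 in the balanced case --- is exactly the assembly the paper intends (it offers no standalone proof, only the pointer ``from the proofs of Theorems 1.7, 1.8, 4.1 and 5.1''), and your balanced case is correct as written. But there are two genuine gaps. First, you twice pass from $\int_M|W^{\mp}|^2<\tfrac16\mu^2([g])$ to the \emph{pointwise} vanishing $W^{\mp}=0$ by Theorem E, whose hypothesis $\delta W^{\mp}=0$ is not available: no harmonic Weyl condition is assumed here (conclusion 2) of the theorem explicitly concerns metrics \emph{without} harmonic Weyl tensor, so it cannot be smuggled in). Under positive Yamabe constant alone the correct tool is Theorem F, which gives only the topological vanishing $b_2^{\mp}=0$. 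In the ``both small'' case this is harmless, since the route of Theorem 5.1 (Theorem F, Freedman, the conformal pointwise pinching from Section 2 of \cite{CZ} and Section 3 of \cite{G3}, then \cite{CTZ}) never needs $W=0$. In the one-sided case it is load-bearing: without $W^{-}\equiv 0$ the signature formula reads $48\pi^2 b_2^{+}=\int_M|W^{+}|^2-\int_M|W^{-}|^2$, not $\int_M|W^{+}|^2$, and your Einstein subcase (``self-dual Einstein, hence $\mathbb{CP}^2$ by Hitchin'') presupposes $W^{-}\equiv0$ with no justification. It is repairable --- $48\pi^2 b_2^{+}\le\int_M|W|^2$ still pins down $b_2^{+}=1$ given strictness, and if $g$ is Einstein then $\delta W^{\pm}=0$ \emph{does} hold, so Theorem E becomes legitimate at that point --- but you never make that observation, and as written the step is circular.

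Second, and more seriously, your strictness dichotomy is misattributed. The bound $\int_M|W|^2\le\tfrac{64}{3}\pi^2\chi(M)$ follows from the pinching via Chern--Gauss--Bonnet together with $\mu^2([g])\le\int_M R^2$, and equality there occurs precisely when $g$ is a constant-scalar-curvature Yamabe minimizer and the pinching is an equality --- \emph{not} when $g$ is Einstein; Gursky's inequality (2.12) enters this chain with the wrong sign to control the equality case. Hence ``$g$ not Einstein $\Rightarrow$ strict'' fails, and with it both your exclusion of $b_2^{+}=2$ and your claim $b_1(M)=0$: part ii) of Theorem F requires the strict inequality $\int_M|\mathring{Ric}|^2<\tfrac1{12}\mu^2([g])$, whereas the hypothesis yields only $\le$, with equality possible exactly when $W\equiv0$. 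That borderline is realized: $S^1\times S^3$ with a product metric of sufficiently short circle is a Yamabe minimizer with $W=0$ and $12\int_M|\mathring{Ric}|^2=\int_M R^2=\mu^2([g])$, so it satisfies every hypothesis of the theorem (including the strict lower bound, which becomes $\tfrac16\mu^2<\tfrac13\mu^2$), yet has $b_1=1$, $\chi=0$ and harmonic curvature --- compare conclusions 2) and 3) of Theorems 1.7 and 4.1, which list precisely these configurations under the same equality $\int_M|W|^2+4\int_M|\mathring{Ric}|^2=\tfrac13\mu^2([g])$. Your case analysis (and, it appears, the statement itself) lets this regime slip through; a complete proof must confront the $W\equiv0$ equality case separately, e.g.\ via Theorem 1.15 of \cite{FX} as in the proofs of Theorems 1.7 and 4.1.
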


\bibliographystyle{amsplain}

\end{document}